\newcommand{\BC}{{\mathbb{C}}}
\newcommand{\BZ}{{\mathbb{Z}}}
\newcommand{\BF}{{\mathbb{F}}}
\newcommand{\BN}{{\mathbb{N}}}
\newcommand{\BR}{{\mathbb{R}}}
\newcommand{\BS}{{\mathbb{S}}}
\newcommand{\gD}{\Delta}
\newcommand{\gd}{\delta}
\newcommand{\gb}{\beta}
\newcommand{\gc}{\gamma}
\newcommand{\gep}{\epsilon}
\newcommand{\ga}{\alpha}
\newcommand{\cM}{{\mathcal{M}}}
\newcommand{\caD}{{\mathcal{D}}}
\newcommand{\RSS}{{W_7}}
\DeclareMathOperator{\Ad}{Ad}
\DeclareMathOperator{\range}{range}
\DeclareMathOperator{\rank}{rank}
\newcommand{\ti}[1]{\tilde{#1}}
\newcommand{\ol}[1]{\overline{#1}}
\newcommand{\Pol}{\mathrm{Pol}}
\newcommand{\spn}{\mathrm{span}}
\newcommand{\sm}{\smallsetminus}
\theoremstyle{plain}
\newtheorem{lma}{Lemma}[section]
\newtheorem{thm}[lma]{Theorem}
\newtheorem*{MainThm}{Main Theorem}
\newtheorem{prp}[lma]{Proposition}
\newtheorem{cor}[lma]{Corollary}
\theoremstyle{definition}
\newtheorem{prd}[lma]{Proposition-Definition}
\newtheorem{dfn}[lma]{Definition}
\newtheorem{rmr}[lma]{Remark}
\newtheorem{ntt}[lma]{Notation}
\newtheorem{exm}[lma]{Example}
\newtheorem{dsc}[lma]{}
\newtheorem{question}[lma]{Question}
\begin{document}
\title[$SE(3)$ invariants for surfaces]{Complete $SE(3)$ invariants for a comeagre set of $C^3$ compact orientable surfaces in $\BR^3$}
\begin{abstract}
  We introduce invariants for compact $C^1$-orientable surfaces (with boundary)
  in $\BR^3$ up to rigid transformations. Our invariants are certain degree
  four polynomials in the moments of
  the delta function of the surface. We give an effective and numerically stable
  inversion algorithm for retrieving the surface from the invariants,
  which works on a comeagre subset of $C^3$-surfaces.
\end{abstract}
\author{Yair Hayut}
\email{yair.hayut@mail.huji.ac.il}
\author{David Lehavi}
\email{dlehavi@gmail.com}
\date{\today}
\maketitle
%
\section{Introduction}
%
One of the classical problems in three dimensional computational geometry is
finding invariants of surfaces in $\BR^3$ under the group of rigid orientation
preserving affine transformations $SE(3)$, which
distinguish between non-equivalent surfaces, and are continuous with respect to
some reasonable topology.

This paper tackles this problem by borrowing ideas from geometric
invariant theory (G.I.T.). Loosely speaking, we do the following:
\begin{enumerate}
\item Find an ambient space where $C^1$-orientable surfaces in $\BR^3$ naturally reside.
  Our ambient space is the space of compactly supported tempered distributions
  on $\BR^3\times \BS^2$, where each surface is associated
  with its delta function (taking the surface and its normal). We identify
  these delta functions with their respective generalized Fourier coefficients.
\item Take the convolution of the delta function
  with its ``mirror images''; first along $\BR^3$, and then along $SO(3)$.
\item\label{i:bound}
  Bound the set of moduli points over which a surface may not be
  recovered from these doubly convolved distributions (the bad points).
\end{enumerate}
The first main result of this paper is an explicit description (in
definitions \ref{d:property_star} and \ref{d:property_star_star}) of
a bound on the set of bad points, for $C^2$-surfaces. We show that
this bounding set is relatively meagre when restricted to $C^3$-surfaces.
We refer the
reader to Section \ref{S:results} for the precise definitions as well as the
statement of this result.

Moreover, we show that
the generalized Fourier coefficients of these distributions
are well behaved, and use this nice behavior to give the second main result:
{\em An effective inversion algorithm over a generic point}, i.e.\ we recover the
surface (away from a comeagre set, and up to the action of $SE(3)$) from
the coefficients of the doubly convolved distribution.

\subsection*{Related work}\ \\
The problem of finding invariants to surfaces in $\BR^3$ is studied in three
different disciplines: Mathematics,
Chemistry (as a method to reproduce molecular structure given
Cryogenic Electron Microscopy)
and Computer Science (specifically in Computer Vision).
The approaches employed were also rather diverse.

As we indicated in the introduction, this work is motivated by, but not exactly analogous to G.I.T.
Indeed, in G.I.T.\ the general plan of attack is:
\begin{enumerate}
\item Find an ambient space.
\item Find elements for which the group action is ``problematic'', and discard
  them.
\item Compute the quotient.
\end{enumerate}
On the one hand our group is nicer than the general linear groups,
which typically appear in G.I.T.: by Schwartz (see \cite{Sc})
real orthogonal groups act stably, so instability (in the G.I.T. sense) may
only come from
translations.
On the other hand,
we define elements as ``problematic'' in a much harsher way
for two reasons: The
first is that looking at the Fourier
coefficients of the double convolution
is the analogue of only looking at certain degree 4 polynomials in the
coefficients inside the ring
of invariants, so our invariants are limited.
The second is that we consider general surfaces, and not
merely algebraic ones. Moreover, in
order to get effective results, we take successive approximations.

From a classical standpoint, one may approach the problem
by endowing the space of surfaces with some topology, and construct a continuous
(generically) one to one map to some normed space.
The algebraic side of the ideas we work with here ---
of using the Fourier coefficients ---
originated in \cite{Kam}. In \cite{SKKLS}, the authors
presented an effective algorithm for reconstructing functions in
$L^2(\BS^2)$ up to $SO(3)$-action from the third degree invariants in their
Fourier decomposition. Similar ideas were rediscovered, although
not as thoroughly developed, in the Computer Vision community;
Specifically see \cite{KFR} (building on earlier work in \cite{Horn}). In all
these papers, the authors solved a ``sub-problem'' of the original problem, by
mapping the space of surface to $L^2(\BS^2)$, either by
using the Gauss Map, or by projection from some distinguished point;
both maps which are very far from being injective. The two main differences
between our work and the strongest previous
result along this line --- \cite{SKKLS} -- are that
our arguments are geometric and not algebraic, and that
we work with $SE(3)\cong\BR^3\rtimes SO(3)$ instead of $SO(3)$; this
comes at the cost of having a generic solution.

Even in this setting of constructing a moduli problem as a set of moduli points,
there are alternatives to the path we took:
E.g.\ in \cite{Ol}, the author develops an algebra of differential forms
for each surface, and in \cite{GHP}, the authors use the classical
{\em slice method}
of invariant theory (for $SO(3)$ only though): solving the invariant problem
assuming that the
mass distribution of the surface has three distinct eigenvalues, and solving
the moduli problem for a finite group. In a recent work, \cite{KRV}, Kogan, Ruddy, and Vinzant solved the corresponding problem for algebraic curves (over $\mathbb{C}^2$)
for the action of the projective group and its subgroups, using tools from algebraic geometry.

From a more modern perspective, a ``correct'' moduli problem is ``What is a
{\em good family} of algebraic varieties? Can we describe all good families in
an {\em optimal} manner?'' In the setting of differentiable surfaces, this
approach seems to still be at its infancy ---
see \cite{FOOO,Jo,MW}.

Finally, in the applied computer science community there is a huge number of
attacks on the general problem of ``finding good
$SE(3)$-invariants of a surface''. The most prevalent are applying
Deep Learning to images of projections on of the surfaces, some discretizations
of the surface, or even spherical harmonic coefficients of the surface in
some fixed coordinate system --- see \cite{Manoetal, Xuetal}.
It is important to note that in the learning
community one usually looks for {\em stability} of features under the group
action rather than invariance. Note that here we use stability in the CS sense, i.e.\ ``not changed
by much under some operation'', not to be confused with the G.I.T.\ term
of stability which appeared above. Very recently there are
attempts to use
these ideas in conjunction with invariant theoretic methods; see e.g.\
\cite{DM} for an overview.

\subsection*{Acknowledgments}
This work stemmed from an applied project (the reader is encouraged to search
the internet for ``Unity Visual Search by Resonai''), and some of the results
were derived experimentally before being proved.
We thank Omer Sidis, Micha Anholt, Eyal Rozenman, and Lior Davin Lunz for
their help in valuable discussions on the material in this paper, and
implementing pieces of it.
We also thank
Bernd Sturmfels for pointing us to \cite{Ol}, and \cite{GHP}.
%
\section{The main results}\label{S:results}
%
Before delving into the precise set up and statement of the main theorem,
we start by looking at a toy problem, which will motivate both the statement
and the outline of the proof: Suppose we want to construct translation
invariants for smooth curves $C\subset\BR^2$. A functional analytic approach
would be to consider for each curve $C$ the delta function of the curve $\gd_C$,
convolve it with its mirror image, $\gd_C \circ (x\mapsto -x)$, and devise some
way to reconstruct $\gd_C$ -- up to translation -- from this convolution (e.g.\
by analyzing the Fourier coefficients of the result).
A more geometric way would be to utilize the curves normal in order
to build a surface which is ``close enough'' to the second symmetric product of
$C$ with itself.

To this end denote by $N_x C \in \BS^1$ the normal of $C$, and consider the set
$S_C:=\{(x-y, N_xC, N_yC)\mid x, y\in C\}\subset \BR^2\times \BS^1\times \BS^1$. For sufficiently "nice"
curves $C$, and away from a subset of positive co-dimension, $S_C$ is a surface, in a four dimensional space.
Moreover, if
$n\in \BS^1$ satisfies that $\#\{y\in C|N_yC=n\}$ is finite, then the intersection of $S_C$
with $\BR^2\times\BS^1 \times \{n\}$ is simply a finite number of translated copies of $C$,
embedded into a three dimensional space. For a generic value of $n$, those copies are likely not to intersect.

A combined geometric and functional analytic way is then to redefine $\gd_C$ as
the delta function of the image of $C$ in $\BR^2\times \BS^1$, consider the
convolution $(\gd_C \circ(x\mapsto -x)) *_{\BR^2} \gd_C$,
and observe that the support of this distribution is exactly our set $S_C$.
Hence, given the Fourier coefficients of of the convolution, we may recover
$S_C$ with accuracy depending on the number of coefficients.

Let us explain how we will attempt to extend
these ideas to the group $SE(3)$ and to surfaces in $\BR^3$.

First, we have to find the ambient spaces in which we work (the obvious
candidate being, similarly to the toy problem above, $\BR^3\times\BS^2$).
In order to keep the computations tangible, we don't convolve along $SE(3)$,
but rather in two steps:
once along $\BR^3$, and then along $SO(3)$;
essentially using the normality of $\BR^3$ in $SE(3)$. We want the result of
the first convolution to be supported on a 4-fold which is isomorphic to
the second symmetric product of the surface in question with itself,
and the second convolution to be supported on an 8-fold which is isomorphic
to the second symmetric product of that 4-fold. For reasons that will become
clear once we dive into the details of the construction, both isomorphisms are
away from some positive co-dimension subset.

In order to find the relevant ambient spaces we can use in our constriction we
have to perform some rough dimensions estimates: suppose
we have a $k$-dimensional manifold in a $d$-dimensional ambient
spaces on on which an $n$-dimensional group acts (in the example above
$d=3, k=1, n = 2$), then the convolution would give us an $(2 \cdot d - n)$-dimensional
ambient space in which we have a $2\cdot k$-dimensional manifold. If we
repeat this operation twice, for groups of dimensions $n_1, n_2$
then we get a $2\cdot (2 \cdot d -n_1)-n_2$-dimensional ambient space.
Specifically, in the case $n_1 = n_2 = 3$, then we \emph{must} have
$4 \cdot d - 9 \geq 4k = 8\Rightarrow d \geq 4\frac{1}{4}$. Luckily, our
natural candidate $\BR^3\times \BS^2$ is 5-dimensional.
We stress that we do not (yet) claim
that we have showed how to perform the convolutions, and in fact, performing
the second one requires some work; we merely claim that this is what we
\emph{have} to do.

Since we convolve along $SO(3)$, and use the Gauss map to map our
surfaces to $\BS^2$, we will have two constraints: the first is that the Gauss map is generically
finite; the second, is that its
derivative -- the shape operator -- is non-constant.

The first is important since eventually we
want to consider fibers along $\BS^2$, and since the first convolution is along
$\BR^3$. The second requirement is related to non-degeneracy of the convolutions.

Sadly, this means that
simple putative examples like spheres, planes and tori, fail to be good examples
in our
construction\footnote{this is a typical problem in invariant theory; e.g. the
``general genus $g$ curve'' is rather intangible for high $g$}. Nevertheless,
we can use the unit sphere $\BS^2$ as an example to \emph{some} extent, and as long
as we can we can, we will.

Before proceeding we make a couple of remarks on the intuitive view above:
First, it is extremely light from a functional
analytic point of view. Our only usage of functional analysis is to recover
the support of a distribution from its Fourier coefficients. It might be
possible to find some more clever inversion algorithm which will enable denser
representation of the data than an 8-fold to represent a surface.
The second is that similar dimension arguments can be made for manifolds
in $\BR^n$ and the group $SE(n)$.
However, harmonic analysis on these spaces is far more
technical, and the applied use cases are not as clear. Also, one would need
to modify definitions \ref{d:property_star} and \ref{d:property_star_star}
accordingly and the proofs of propositions \ref{p:m2} and \ref{p:m4} in order
to bound the corresponding set of manifolds in which
the inversion algorithm fails. We do not pursue this direction here.

Let us now move to several preliminary definitions, before stating the main
theorem.
\begin{dfn}[Delta function of a surface]\label{D:delta_surface}
  Let $M$ be a compact orientable $C^1$-surface, possibly with a boundary.
  The tempered distribution $\gd_M\in \caD_{cs}(\BR^3\times \BS^2)$ is the unique
  distribution such that for each ball $B\subset\BR^3\times \BS^2$, the value of
  $\int_B\gd_M$ is the surface area of $\{(x, N_xM/\BR^3)\mid x\in M\}\cap B$.
\end{dfn}

We take our test functions from the Sobolev space $W^{2,2} = W^{2, 2}(\mathbb{R}^3 \times \BS^2)$, so $f_n \in W^{2,2}$ converges to $f$ in the topology of $W^{2,2}$ if the functions as well as their first and second derivatives converge to $f$ and its first and second derivatives, respectively.

Note that this identification of surfaces with tempered distributions gives a
natural topology on the space of surfaces via the weak-* topology.
We are using Sobolev space since at some points in the argument it would be important that the topology
respects the differential structure (nearby points have similar first and second
derivatives).

\begin{prd}[the topology on the space of surfaces up to $SE(3)$ action]\label{P:topology}
Recall that the weak-* topology is induced by semi-norms.
Denote the group of rigid orientable transformations on $\BR^3$ by $SE(3)$.
The group $SE(3)$ acts on $\caD_{cs}(\BR^3\times \BS^2)$ via its action
on $\BR^3\times\BS^2$, and the quotient space
$\caD_{cs}(\BR^3\times \BS^2)/SE(3)$ is endowed with the
topology base
\[\{d \in \caD : \inf_{g\in SE(3)} \sum_{i = 0}^n|\langle g_* d,u_i\rangle - a_i| < \epsilon \}\] for arbitrary $\epsilon > 0$, $a_i$ and test functions $u_i$.
\end{prd}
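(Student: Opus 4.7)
The statement needs three verifications: that $SE(3)$ acts on the distribution space, that each of the described sets is invariant under this action (and so descends to the quotient), and that the collection of descended sets satisfies the base axioms. My plan is to handle the first two directly, and to dispatch the third by identifying the proposed family with the quotient topology induced from the weak-$*$ (equivalently Sobolev) topology on $\caD_{cs}(\BR^3\times\BS^2)$.

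For the action, $SE(3)=\BR^3\rtimes SO(3)$ acts on $\BR^3\times \BS^2$ by $(t,R)\cdot(x,n)=(Rx+t,Rn)$, which is a diffeomorphism and in fact an isometry on $\BR^3$. Pulling back test functions gives a continuous (indeed isometric) linear automorphism of $W^{2,2}$, so the usual dual formula $\langle g_*d,u\rangle:=\langle d,u\circ g\rangle$ defines a left action of $SE(3)$ on $\caD_{cs}$ by weak-$*$ homeomorphisms that preserves compact support. For the descent, with $d'=h_*d$, the substitution $g'=gh$ yields
\[\inf_{g\in SE(3)}\sum_{i}|\langle g_*d',u_i\rangle-a_i|=\inf_{g'\in SE(3)}\sum_{i}|\langle g'_*d,u_i\rangle-a_i|,\]
so each proposed set is $SE(3)$-invariant and its image in the quotient is well defined.

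For the base axioms I plan to identify the proposed collection with a standard base. Because $SE(3)$ acts by homeomorphisms, the quotient map $\pi\colon\caD_{cs}\to\caD_{cs}/SE(3)$ is open, since $\pi^{-1}(\pi V)=\bigcup_g g_* V$ is a union of opens; hence images under $\pi$ of a weak-$*$ base form a base for the quotient topology. Taking $V=\{e:\max_i|\langle e,u_i\rangle-a_i|<\epsilon\}$ gives $\pi(V)=\{[d]:\exists g,\,\max_i|\langle g_*d,u_i\rangle-a_i|<\epsilon\}$. The proposed sets can be rewritten as $\{[d]:\exists g,\,\sum_{i}|\langle g_*d,u_i\rangle-a_i|<\epsilon\}$, since the strict inequality $\inf_g<\epsilon$ is always witnessed by some $g$. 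The elementary bounds $\max_i t_i\le \sum_i t_i\le n\max_i t_i$ then show that inside any proposed basic set around a point $[d]$ lies some $\pi(V)$-set around $[d]$, and conversely. The two families therefore generate the same topology, and the base axioms for the proposed family follow from those for the quotient topology.

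The one subtle step is the last: a direct verification of the intersection axiom for the proposed sets is awkward, because $\inf_g$ does not commute with sums, so a single $g$ need not simultaneously nearly-minimize over two different families of test functions. Passing through the weak-$*$ quotient description circumvents this obstacle by letting the group element be chosen independently for each basic set, while the openness of $\pi$ guarantees that the quotient topology is generated by $\pi$-images of weak-$*$ basic opens. I expect no difficulty with the action or descent steps; the identification of topologies is where the genuine content of the statement sits.
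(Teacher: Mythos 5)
Your proposal is correct, but note that the paper offers no proof of this statement at all: it is a Proposition-Definition, and the authors simply decree the topology, leaving the verification that the sets actually form a base (and descend to the quotient) implicit. Your write-up supplies exactly the missing justification, and the route you take --- observing that $\pi$ is an open map because $SE(3)$ acts by weak-$*$ homeomorphisms, hence images of weak-$*$ basic opens form a base for the quotient topology, and then showing the proposed $\inf$-sets and the sets $\pi(V)$ mutually refine each other at every point --- is the natural one and is consistent with how the paper subsequently uses the topology (e.g.\ the Hausdorff lemma argues with precisely these $\pi$-images). Your diagnosis of the one genuine subtlety is also on target: a direct check of the intersection axiom for the $\inf$-sets fails because the witnessing $g$ for one family of test functions need not work for another, so one must pass through the upstairs topology where the group element is quantified separately for each basic set. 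The only small point to make explicit when writing this out is that the mutual refinement requires recentering the constants $a_i$ at $\langle g_{0*}d_0,u_i\rangle$ for a witness $g_0$ of the given point, not just the global inequalities $\max_i t_i\le\sum_i t_i\le n\max_i t_i$; but this is routine and does not affect the correctness of the argument.
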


\begin{lma}
The topology on $\caD_{cs}(\BR^3\times \BS^2)/SE(3)$ is Hausdorff.
\end{lma}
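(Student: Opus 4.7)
The plan is to argue by contradiction and extract a convergent subsequence of group elements using a properness argument, the same idea that makes quotients by proper actions Hausdorff.

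Suppose $[d_1]\neq[d_2]$ in $\caD_{cs}(\BR^3\times\BS^2)/SE(3)$ but that every pair of basic open neighborhoods of these classes has nonempty intersection. Fix a countable family $\{u_i\}_{i\geq 1}\subset W^{2,2}(\BR^3\times\BS^2)$ that is dense in $W^{2,2}$ and that we arrange to contain compactly supported bump functions concentrated in balls of shrinking radii around a countable dense set of points in $\BR^3\times\BS^2$. Applying the contradiction hypothesis to the neighborhood determined by the first $n$ of these test functions and tolerance $1/n$, we obtain a sequence $d_n\in\caD_{cs}(\BR^3\times\BS^2)$ together with group elements $g_n,h_n\in SE(3)$ with
\[\sum_{i=1}^n\big|\langle g_{n*}d_n,u_i\rangle-\langle d_1,u_i\rangle\big|<\tfrac{1}{n},\qquad \sum_{i=1}^n\big|\langle h_{n*}d_n,u_i\rangle-\langle d_2,u_i\rangle\big|<\tfrac{1}{n}.\]
Setting $d_n':=g_{n*}d_n$ and $k_n:=h_ng_n^{-1}$, we have $\langle d_n',u_i\rangle\to\langle d_1,u_i\rangle$ and $\langle k_{n*}d_n',u_i\rangle\to\langle d_2,u_i\rangle$ for every fixed $i$.

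The heart of the proof is a properness step extracting a convergent subsequence of $k_n$. Using the compact supports of $d_1$ and $d_2$ together with the bump functions placed in $\{u_i\}$ at every point outside these supports, one argues that the supports of $d_n'$ eventually lie in any prescribed neighborhood of $\mathrm{supp}(d_1)$, and likewise $\mathrm{supp}(k_{n*}d_n')$ eventually lies in any prescribed neighborhood of $\mathrm{supp}(d_2)$ --- because otherwise mass escaping to infinity would register against one of the bumps and violate the test-function convergence, using that a compactly supported distribution has finite distributional order. Since $k_n\cdot\mathrm{supp}(d_n')=\mathrm{supp}(k_{n*}d_n')$, the translation component of $k_n$ remains in a bounded region of $\BR^3$, and the rotation component lives in the compact group $SO(3)$; hence a subsequence $k_{n_j}\to k\in SE(3)$ converges.

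Finally, continuity of the $SE(3)$-action against each fixed test function gives $\langle k_{n_j*}d'_{n_j},u_i\rangle\to\langle k_*d_1,u_i\rangle$, so $\langle k_*d_1,u_i\rangle=\langle d_2,u_i\rangle$ for every $i$. Density of $\{u_i\}$ in $W^{2,2}$ then forces $k_*d_1=d_2$, contradicting the hypothesis $[d_1]\neq[d_2]$.

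I expect the main obstacle to be exactly the properness step: passing from mere test-function convergence on a dense family to genuine support control on the sequence $d_n'$. Pointwise smallness against bump functions does not by itself rule out mass leaking out while remaining small everywhere, so the argument must combine the finite order of compactly supported distributions with a quantitatively uniform cover of the complement of $\mathrm{supp}(d_1)$ (respectively $\mathrm{supp}(d_2)$) by bumps in $\{u_i\}$. Once that step is in hand, the remainder of the proof is a standard proper-action limiting argument.
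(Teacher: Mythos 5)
Your argument is genuinely different from the paper's, and the gap sits exactly where you flagged it: the properness step is not just delicate, it is false. At stage $n$ the distribution $d_n$ is only tested against $u_1,\dots,u_n$, whose supports (or essential supports) lie in some bounded region $B_n$, so nothing prevents $d_n'$ from carrying mass outside $B_n$. Concretely, take $d_n = d_1 + (k_n^{-1})_* d_2$ where $k_n$ is translation by a vector $t_n$ so large that both $-t_n+\mathrm{supp}(d_2)$ and $t_n+\mathrm{supp}(d_1)$ miss $B_n$. With $g_n=e$ and $h_n=k_n$ one gets $\langle d_n,u_i\rangle=\langle d_1,u_i\rangle$ and $\langle k_{n*}d_n,u_i\rangle=\langle d_2,u_i\rangle$ (up to an error that is $0$ for bumps and $o(1)$ for general $W^{2,2}$ test functions) for all $i\le n$, while the translation part of $k_n$ escapes to infinity; no convergent subsequence exists. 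Worse, this same family shows that the class $[\,d_1+(k^{-1})_*d_2\,]$ lies in \emph{every} pair of basic neighborhoods of $[d_1]$ and $[d_2]$ from Proposition-Definition \ref{P:topology} once $t$ is large enough, so the disjoint-neighborhoods statement you are driving toward cannot be recovered by any repair of the properness step for the base as literally written. A secondary gap: even granting $k_{n_j}\to k$, the step $\langle k_{n_j*}d'_{n_j},u_i\rangle\to\langle k_*d_1,u_i\rangle$ requires rewriting the pairing against the moving test functions $u_i\circ k_{n_j}^{-1}$, and controlling that needs a uniform bound on the $d'_{n_j}$ as functionals; convergence on a countable dense family gives neither such a bound (Banach--Steinhaus needs pointwise boundedness on a non-meagre set) nor convergence against test functions outside the family.

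The paper's proof avoids sequences and properness altogether. Given $g_*d_1\neq d_2$ for all $g$, it separates in two regimes: for $g$ with large translation part the supports of $g_*d_1$ and $d_2$ are disjoint, so a single bump function separates them uniformly; for the remaining $g$, which form a compact subset of $SE(3)$, continuity and compactness yield finitely many test functions $u_i$ and an $\epsilon>0$ with $\max_i|\langle g_*d_1-d_2,u_i\rangle|>\epsilon$ for every $g$. This produces one basic neighborhood of $[d_2]$ not containing $[d_1]$, and symmetrically one of $[d_1]$ not containing $[d_2]$ --- a $T_1$-type separation of the two orbits rather than a pair of disjoint neighborhoods. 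If you want to salvage your scheme, you would at minimum have to restrict attention to distributions with connected (or uniformly bounded) supports, which is where a genuine properness argument for the $SE(3)$-action could get traction.
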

\begin{proof}
Fix distributions $d_1$, $d_2$ with compact support such that $g_* d_1 \neq d_2$ for all $g \in SE(3)$. Then, for every $g \in SE(3)$ there is $u$ such that $\langle g_*(d_1) - d_2, u \rangle \neq 0$. For $g$ that contains sufficiently large translation, the supports of $g_* d_1, d_2$ are going to be disjoint, so we can pick any bump function $u$ such that $\langle g_*(d_1) - d_2, u \rangle$  is far from zero at those elements. Now, using continuity and compactness of the subset of $SE(3)$ with bounded translations, we can find finitely many test functions $u_i$ such that for every $g$ there is $i$ such that $|\langle g_*(d_1) - d_2, u_i \rangle| > \epsilon$. Putting them together and letting $a_i = \langle d_2, u_i\rangle$ we obtain an open neighborhood of $d_2$ that does not contain $d_1$. By symmetry, one can also find an open neighborhood of $d_1$ that does not contain $d_2$.
\end{proof}
We remark that this argument is essentially the statement that finitely many sufficiently thin kernels (say, bump functions or restricted Gaussians) are enough in order to distinguish between elements in $\caD_{cs}(\BR^3\times \BS^2)/SE(3)$. In other words, declaring the continuity of the functions \[\min_{g\in SE(3)} \sum \| (g_*(d_1) - d_2) * u_i \|_2\] is enough in order to get a Hausdorff topology on the $SE(3)$-orbits.

\begin{dfn}[Spherical harmonics]
  Denote by $V_d$ the set of homogeneous polynomials of degree $d$ on $\BR^3$.
  Recall the standard definition of {\em spherical harmonics} :
  \[Y_{nm} = (-1)^m\sqrt{\frac{(2n+1)(n-m)!}{4\pi(n+m)!}}P^m_n(\cos\theta)e^{im\phi},\]
  where $n\in\BN, m\in\BZ:|m|\leq n,\phi\in[0,2\pi],\theta\in[0,\pi]$, and
  $P^m_n$ are the {\em associated Legendre polynomials}:
  \[
  P^m_n(x) = \frac{(-1)^m}{2^n n!}(1-x^2)^{m/2}\frac{d^{n+m}}{dx^{n+m}}(x^2-1)^n.
  \]
  Recall that the spherical harmonics are orthonormal basis for the Hilbert space
  $L^2(\BS^2)$ up to the scaling factor $4\pi/(2n+1)$, (we identify a point in $\BS^2$ with its polar coordinates).
  Let
  $[n]:=\langle Y_{n,m}:|m|\leq n\rangle$.
  In order to work with irreducible representations over the real numbers,
  we define
  \begin{equation}\label{e:real_ynm}
  Y_{nm}^\BR:=
  \begin{cases*}
  \sqrt2\text{Im} Y_{n|m|}& if $m < 0$\\
  Y_{00}& if $m=0$ \\
  \sqrt2\text{Re}Y_{n|m|}& if $m > 0$
  \end{cases*}.
  \end{equation}
\end{dfn}
\begin{prp}
  Taking $x,y,z$ as a basis of $\BR^3$, and
  making the standard identifications
  \[
  r^2=x^2+y^2+z^2,\quad\cos\theta=z/r,\quad\exp i\phi= (x+iy)/\sqrt{x^2+y^2},
  \]
  we have $V_0 = [0]$, $V_1=r[1]$, and for all $n\geq 0$: $V_{n+2}=r^2V_n\oplus r^{n+2}[n+2]$.
\end{prp}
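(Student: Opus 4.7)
The plan is to proceed by induction on $n$, with base cases $V_0=[0]$ and $V_1=r[1]$ handled by direct computation from the formulas for $Y_{0,0}$ and $Y_{1,m}$: the former says that $[0]$ consists of the constants, matching $V_0$, while $rY_{1,0}\propto z$ and $rY_{1,\pm 1}\propto x\pm iy$ span $V_1=\langle x,y,z\rangle$. For the inductive step, I must verify three things: that both $r^2V_n$ and $r^{n+2}[n+2]$ sit inside $V_{n+2}$; that their intersection is trivial; and that the dimensions add up correctly.

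The containment $r^2V_n\subseteq V_{n+2}$ is immediate, so the main obstacle is showing that each $r^{n+2}Y_{n+2,m}$ is a genuine polynomial in $x,y,z$. I would unpack the definition of $Y_{n,m}$: since $(x^2-1)^n$ is an even polynomial, its $(n+m)$-th derivative has degree $n-m$ with all monomials of parity $n-m$. Consequently $P_n^m(\cos\theta)=\sin^{|m|}\theta\cdot Q(\cos\theta)$, where $Q$ is a polynomial of degree $n-|m|$ with matching parity. Under the identifications $r\cos\theta=z$ and $r\sin\theta\,e^{i\phi}=x+iy$, one then finds
\[
r^n Y_{n,m}\;\propto\;(x\pm iy)^{|m|}\sum_{k}q_k\, z^k\, r^{n-|m|-k},
\]
where the sum runs over $0\leq k\leq n-|m|$ with $k$ of the same parity as $n-|m|$. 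Each exponent $n-|m|-k$ is then a nonnegative even integer, so $r^{n-|m|-k}=(x^2+y^2+z^2)^{(n-|m|-k)/2}$ is already a polynomial in $x,y,z$. This shows $r^n[n]\subseteq V_n$ for every $n$, and in particular $r^{n+2}[n+2]\subseteq V_{n+2}$.

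For directness, iterating the inductive hypothesis produces the structural decomposition $V_k=\bigoplus_{j\geq 0,\,k-2j\geq 0} r^k[k-2j]$ for every $k\leq n$. Therefore the restriction of any $p\in V_n$ to $\BS^2$ lies in $\bigoplus_j[n-2j]$, which is $L^2(\BS^2)$-orthogonal to $[n+2]$. Hence if $r^2 p=r^{n+2}h$ with $p\in V_n$ and $h\in[n+2]$, then restricting to the unit sphere yields $p|_{\BS^2}=h$, and orthogonality gives $\|h\|^2=\langle p|_{\BS^2},h\rangle=0$, forcing $h=0$ and thus $p=0$. I would close with the dimension count
\[
\dim V_{n+2}=\binom{n+4}{2}=\frac{(n+1)(n+2)}{2}+(2n+5)=\dim V_n+\dim[n+2],
\]
so the direct sum exhausts $V_{n+2}$.
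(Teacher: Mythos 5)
Your proof is correct. The paper states this proposition without proof, treating it as the classical harmonic decomposition of homogeneous polynomials, so there is no argument to compare against; your write-up is the standard one (showing the solid harmonics $r^nY_{nm}$ are polynomials via the parity of the Rodrigues formula, getting directness from $L^2(\BS^2)$-orthogonality of distinct degrees, and closing with the dimension count $\binom{n+4}{2}=\binom{n+2}{2}+(2n+5)$), and all three steps check out.
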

\begin{dfn}\label{d:rho}
  Define the map
  \[\begin{aligned}
  \rho_{n,m}:\caD_{cs}(\BR^3\times \BS^2)&\to\caD_{cs}(\BR^3)\\
  \mu\mapsto&\int_{\BS^2} Y_{nm}^{\BR}d\mu.
  \end{aligned}\]
  We think of the vector space $\langle \rho_{n,m}(\mu)\mid n,m \rangle$
  as residing in
  \[\caD_{cs}(\BR^3)\otimes L^2(\BS^2)^\vee\cong\caD_{cs}(\BR^3)\otimes(\oplus_{n} [n]^\vee).\]
\end{dfn}
\begin{exm}[The unit sphere running example]
  Let us compute the moments of $\rho_{n,m}(\gd_{\BS^2})$.
  The moments reside in the dual space of the polynomials
  over $\BR^3$, which we represent here as
  $\oplus_{n\leq d} V_n$ with the basis $r^aY^{\BR}_{bc}$.
  Computing we get:
  \[
  \int_{\BR^3\times\BS^2}r^aY_{bc}Y_{nm}\gd_{\BS^2}=
  \int_{\BS^2}Y_{bc}Y_{nm}=(-1)^m\int_{\BS^2}Y_{bc}Y_{n,-m}^*
  \]
  This integral is non trivial precisely when $b=n, c=-m$, in which case
  it is equal to $(-1)^m4\pi/(2n+1)$. Using Equation \ref{e:real_ynm} we
  deduce the coefficients in terms of $Y_{nm}^\BR$.
\end{exm}
\begin{dfn}\label{d:fF}
  Denote by
  $f^{\mu}_{d, n,m,n',m'}\in \oplus_{j\leq d} V_j$
  the truncation at degree $d$ of the moment generating function of
  \[(\rho_{n',m'}(\mu)\circ (x\mapsto-x))*_{\BR^3}\rho_{n,m}(\mu),\]
  where the exponential function is considered
  as a sum of powers.
  The sequence $f^\mu_{d,\bullet} = \langle f^{\mu}_{d, n,m,n',m'} \mid n,m, n',m' < d'\rangle$ represents
  an element in  $(\oplus_{n\leq d} V_n)\otimes(\oplus_{n\leq d'} [n]^\vee)^{\otimes^2}$.
  Denote by $F^{\mu}_{d, d'}$ the image in the trivial $SO(3)$ subspace
  of the convolution of $f^\mu_{d,\bullet}$ with a copy of itself composed
  with the inverse map on $SO(3)$. This can be viewed as the tensor product of
  $f^\mu_{d,\bullet}$ with itself.
  Fixing a basis for
  $(\oplus_{n\leq d} V_n)\otimes(\oplus_{n\leq d} [n]^\vee)^{\otimes^2}$,
  we can think of its second tensor -- in which $F^{\mu}_{d, d'}$ lives --
  as a finite dimensional coefficients spaces. Note that the $F^{\mu}_{d, d'}$
  are degree $4$ polynomials in the moments of the functions $\rho_{n,m}(\mu)$.
\end{dfn}
\begin{rmr}[computing the coefficients of $f$ from the moments of the
    $\rho_{n,m}$s]
  The moments of $\rho_{n,m}(\gd_M)$ are expressed in
  terms of the basis $r^aY^{\BR}_{bc}$, and we want to compute the moments of
  $f^{\gd_M}_{d,n,m,n',m'}$ -- for fixed $n,m,n',m'$
  in terms of the same basis.
  Computing these moments is straight forward
  (though computationally
  heavy) using two observations: The
  first is that it is immediate to move between monomial basis to homogeneous
  polynomials of degree $a$, and the $r^aY^{\BR}_{bc}$s. The second is
  that the moment generating function of $f$ is the product of the
  moment generating functions of $\rho_{n,m}$ and $\rho_{n',m'}$, and that
  in monomial basis the coefficients of the moment are immediate to compute.
\end{rmr}
Our main theorem is:
\begin{MainThm}
  The map $M\mapsto\gd_M\mapsto \bigcup_{d,d'\in\BN} F_{\gd_M,d, d'}$
  is injective on
  a comeagre subset of compact $C^3$-surfaces in the weak-* topology.

Moreover, we give an effective
reconstruction algorithm which, given
$F_{\gd_M,d, d'}$ where $d,d'$ are bounded by some $K$,
and assuming $M$ as above with a known bound from above
on the absolute value of the sectional curvature, and on the radius,
recover $M$ up to some accuracy which depends on $K$, the bounding radius, and
the bound on the curvature.
This algorithm is continuous in the
coefficients $F_{\gd_M,d, d'}$,
and the complexity of the algorithm is polynomial at $K$.
\end{MainThm}
The rest of the paper proceeds as follows:
As already mentioned in the introduction, the gist of our
construction is convolving $\gd_M$
with its reflection, first along $\BR^3$, and then along $SO(3)$. In Sections
\ref{s:translations}-\ref{S:rotation_inv} we give a geometric interpretation of
this statement, and
show that under certain conditions this ``double convolution'' is an
invertible operation, up to the action of $SE(3)$. In
Section \ref{S:rep} we show how to interpret the construction as a convolution
with the reflection, and recap some classical materiel --- interpreting the
convolution with the reflection in representation theoretical terms. Finally
in Section \ref{S:functional_analysis} we show that any sufficiently general
surface
satisfies our conditions, and conclude by using the representation theoretical
aspects developed in the previous section to complete the effective part of
the proof.

We remark that the invariants make sense for every $C^1$-surface, and the conditions $(\star)$ and $(\star\star)$ which imply the invertibility rely only on the second derivative, see definitions \ref{d:property_star} and \ref{d:property_star_star}. We are using the assumption that the surface is in $C^3$ only in Lemma \ref{c:co_meagre} in order to verify that the subset of bad points which is of measure zero would also be of low $\mathcal{I}$-dimension (see definition \ref{d:I-dimension}).
%
\section{Geometric aspects I: preliminaries and translation invariance}\label{s:translations}
%
We start this section by considering again our toy model from the beginning
of the former section: A curve in $\BR^2$, which we ``lift'' to
$\BR^2\times\BS^1$. There we claimed (without proof) that
$S_C\subset\BR^2\times\BS^1\times\BS^1$ is ``close'' in some intuitive sense to
the second symmetric product, and that its fibers along general points in
$\BS^1$ are composed of copies of $C$. We maintain that this is easy for the
curve case. However, it is harder to do when the dimensions at hand increase,
which happens in this section and even more so in section \ref{S:rotation_inv}.
Our solution borrows ideas from intersection theory. Before presenting it
in the generality we need we will present it in the toy model context.
Define the following sets in $\BR^2\times\BR^2\times\BS^1$:
\[
Y:=\{(g,x, n_x)|g\in\BR^2,x\in c\}, \quad Z:=\{(g,g+x, n_x)|g\in\BR^2,x\in c\}.
\]
Then $X:=Y\cap Z$ parameterizes the points in $S_C\cap(\BR^3\times\{(n,n)|n\in\BS^1\})$.
If $X$ has a component of dimension $>1$ then the Gauss map behaves badly. For example,
it has a non discrete fiber.
In higher dimensions the analog of $X$ (and it projections
on the components of $Y, Z$, and subspaces of $X$ we may define)
exposes much more information on the degeneracy. That being said, we use $X$
as a tool to {\em bound} the degeneracy, not describe it exactly.

We note that unlike in the toy problem, in our actual result we will have
to deal both with the fact that not all but merely generic fibers (the analogue
of the fibers we took on $\BS^1$) work, and even on them we will
have to deal with copies of our surfaces intersecting one another (but luckily,
only isolated points).
\begin{ntt}
  Throughout the rest of the paper we use capital letters to denote varieties,
  and small letters to denote points.
  We use analogue constructions to deal with actions of the groups $\BR^3$ and
  $SO(3)$; however, as the varieties will have different dimensions for
  these two groups, we will use the same letter, but with a different
  subscript index, which will be the \emph{dimension} of the variety involved.
  The mild abuse of this rule is varieties which are symmetric
  powers of our surface of study $M=M_2$ - up to some positive co-dimension
  subset:
  For $M=M_2$ itself we drop the subscript altogether,
  and the other varieties are
  denote by $M_{2\times n}^\bullet$, where $n$ is the power, and $\bullet$ is
  some extra annotation, depending on the discarded co-dimension $\geq2$
  subset. We would indicate explicitly the other rare cases in which
  we do not follow this rule. The reason for this choice of notation is that
  the main argument in this section repeats, with some crucial
  differences, in Section \ref{S:rotation_inv}. We want the notation to
  reflect this similarity, and yet keep it so that we can refer the the objects
  appearing in this section in the following sections.
  Finally, the only normal bundle considered throughout this work is the normal
  to $M\subset \BR^3$. Hence we denote $N_x:=N_xM/\BR^3$.
\end{ntt}
\begin{dfn}\label{d:I-dimension}
  Let $W$ be a manifold. Denote by $\mathcal{I}_n(W)$ the collection of
  subsets of $W$ which are covered by a countable union of
  {\em smooth images} of the
  closed $n$-dimensional Euclidean ball.
When $W$ is clear from the context, we would say that $X$ has $\mathcal{I}$-dimension $\leq n$ if $X\in\mathcal{I}_n(W)$.
\end{dfn}
By the invariance of domain, for  $W$ of dimension above $n$, $\mathcal{I}_n(W)$ is a proper, $\sigma$-complete ideal, and every member of it is meagre and null (of measure zero).
Clearly, every image of a set from $\mathcal{I}_n(W)$ under a smooth map from $W$ to $W'$ is
in $\mathcal{I}_n(W')$.

\begin{ntt}
  We consider the action of $G=\BR^3$ on $\BR^3$ by translation. We use
  different notations for these two copies of $\BR^3$ since from our perspective
  the first is a group whereas the latter is a homogeneous
  space of this group.
  Let $M\subset \BR^3$ be a compact twice differentiable surface.
  Denote the inclusion map $M\to\BR^3$ by $i_{M/\BR^3}$.
  We typically denote the action of $g \in G$ on $m \in \BR^3$ by $g\circ m$.
  In cases in which this notation might cause confusion, we would denote
  the action of $g\in G$ on $\BR^3$ by $\eta_{g,\BR^3}$.

  We define two 5-folds and one surface in $G\times\BR^3\times \BS^2$:
  \[
  \begin{aligned}
    Y_5=Y_5(M):=&\{(g, m, N_m)\mid g\in G, m\in M\},\\
    Z_5=Z_5(M):=&\{(g, g\circ m, N_m)\mid g\in G, m\in M\},\\
    X_2=X_2(M):=&Y_5\cap Z_5.
  \end{aligned}
  \]
  Note that $Y_5$ is naturally isomorphic to $G\times M$, and so there is a
  natural bundles isomorphism: $TY_5=TG\oplus TM$.
  Moreover, there are pointwise isomorphisms
  $T_{(g, g\circ m, N_m)}Z_5\cong T_gG\oplus T_{m}M$.
\end{ntt}
\begin{exm}[The unit sphere running example]
  If $M=\BS^2$, then
  \[
  \begin{aligned}
  Y_5&=\{(g, n, n)\mid g\in \BR^3, n\in\BS^2\},\\
  Z_5&=\{(g, g + n, n)\mid g\in \BR^3, n\in\BS^2\},\\
  X_2&=\{(0, n, n)\mid n\in\BS^2\}.
  \end{aligned}
  \]
  Similarly, it is easy to verify that whenever the Gauss map is injective
  on $M$, then $X_2 = \{(0, x, N_x)\mid x\in M\}$).
\end{exm}

\begin{exm}[Unit disc through the origin non-example]
  Let $M$ is the unit disc $D_{1,0}\subset\BR^2\subset\BR^3$, where
  $\BR^2\subset\BR^3$ are the points whose dot product with the north pole
  $N\in\BS^2$ is trivial. Then we have:
  \[
  \begin{aligned}
  Y_5&=\{(g, x, N)\mid g\in\BR^3, x\in D_{1,0}\},
  Z_5&=\{(g, x+g, N)\mid g\in\BR^3, x\in D_{1,0}\},
  \end{aligned}
  \]
  and so \[X_2=\{(g,x, N)\mid x, x+g\in D_{1,0}\}\] is 4-dimensional.
\end{exm}
\begin{exm}[The cylinder non-example]
  If $M=\BS^1\times[-1,1]$ then
  \[\begin{aligned}
  Y_5&=\{((a,b), (x, y), x)\mid a\in\BR^2, b\in\BR,x\in\BS^1, y\in[-1,1]\},\\
  Z_5&=\{((a,b), (a+x, y+b), x)\mid a\in\BR^2, b\in\BR,x\in\BS^1, y\in[-1,1]\},
  \end{aligned}\]
  and so \[X_2=\{((0,b), (x, y), x)\mid y,y+b\in[-1, 1], x\in\BS^1\}\]
  which is 3-dimensional.
\end{exm}
\begin{exm}[A union of two spheres example]
  If $M$ is a union of $\BS^2$ , and the translate of $\BS^2$
  by $x$ (where $\|x\|>2$), then
  (although $M$ is disconnected):
  \[
  \begin{aligned}
  Y_5&=\{(g, n, n), (g, n+x, n)\mid g\in \BR^3, n\in\BS^2\},\\
  Z_5&=\{(g, n + g, n), (g, n + g+ x, n)\mid g\in \BR^3, n\in\BS^2\},
  \end{aligned}
  \]
  and so
  \[X_2=\{(0, n, n), (x, n+x, n), (-x, n-x, n), (0, n + x, n)\mid n\in\BS^2\},\]
  which is isomorphic to a union of four spheres.
\end{exm}
\begin{ntt}
Let us return to the general case.
  Denote the inclusion $X_2\to G\times\BR^3\times \BS^2$ by
  $i_{X_2/G\times\BR^3\times \BS^2}$.
  Denote the composition of the inclusion map and the projection maps from
  $X_2$ to the first, second
  and third components in $G\times\BR^3\times \BS^2$ by
  \[
  \pi_{X_2/G}, \quad \pi_{X_2/\BR^3}, \quad \pi_{X_2/\BS^2}
  \]
  respectively. Since $\range(\pi_{X_2/\BR^3})\subset M$,
  we may also define $\pi_{X_2/M}:=i^{-1}_{M/\BR^3}\circ\pi_{X_2/\BR^3}$.

  Finally we denote
  \[
  \rho_{X_2/M}:=\eta_{\pi_{X_2/G}(-)^{-1},\BR^3}\circ\pi_{X_2/M}=
  i^{-1}_{M/\BR^3}\circ\eta_{\pi_{X_2/G}(-)^{-1},\BR^3}\circ\pi_{X_2/\BR^3},
  \]
  where the composition in the right term of the last equality is well defined by
  the definition of $X_2$. Namely, $\rho_{X_2/M}(g, x, n) = g^{-1} \circ x \in M$.
\end{ntt}
Each $x \in X_2$ corresponds to a point in $Y_5$ and a point in $Z_5$; each
of these points has three components: a $G$-component, an $M$-component,
and an $\BS^2$-component.
The function $\pi_{X_2/M}$ returns the $M$-component of the
point corresponding to $Y_5$ and $\rho_{X_2/M}$ returns $M$-component of
the point corresponding to $Z_5$. Rewriting the definition of $X_2$, we get for all $p \in X_2$,
\begin{equation}\label{equation:rho and pi translation}
\pi_{X_2/M}(p) = \eta_{\pi_{X_2/G}(p),\BR^3}(\rho_{X_2/M}(p)).
\end{equation}
As the notation suggests, we expect $X_2$ to be a two dimensional manifold.
Since $\{0\} \times M \subseteq X_2$, $X_2$ always contains a subset of dimension 2. The following definition isolates a rather large class of surfaces which behave as desired, at most places.
\begin{dfn}[Property $\star$]\label{d:property_star}
  We say that $M$ has property $\star$ if there is a set $X_1' \subseteq X_2$
  of $\mathcal{I}$-dimension $\leq 1$ such that for any
  $p=(g, m, N_m)=(g, g\circ m', N_{m'})\in X_2\sm X'_1$
  the following properties hold:
  \begin{enumerate}
    \item\label{i:iso_star}
      $\rank S_m = \rank S_{m'} = 2$.
    \item\label{i:shape_star}
      Denote $n:=N_m = N_{m'}$.
      Then the linear map obtained from the composition of the maps
     \[
     T_m M\xrightarrow{(id,g_*)} T_m M\oplus T_{m'} M\xrightarrow{d N_m,d N_{m'}}
     T_n \BS^2\oplus T_n \BS^2
     \xrightarrow{(x,y)\mapsto x-y} T_n \BS^2,
     \]
     is of rank $2$.
  \end{enumerate}
\end{dfn}
Before proceeding to tackle translation invariance, recall the
definition of the {\em shape operator} and prove two
properties, related to
$\star$. The first evaluates the components of the tangent space of $X_2$ at a
point $p$, and the other is a local consequence of requirement
\ref{i:iso_star} of $\star$, that
enables us to compute the dimension of $X_2$. There are a couple of
equivalent definitions for the shape operator. We are using the following one:
\begin{dfn}[The shape operator]
  Let $A\subset\BR^3$ be a $C^2$ surface, $a\in A$ a point
  then the \emph{shape operator} is the linear map $S_a\colon T_aA\to T_aA$ which takes
  $v$ to $-d N_aA$.
\end{dfn}
\begin{lma}\label{L:prop_star1_is_regualrity}
Let $M\subseteq \BR^3$ be a twice differentiable compact surface,
let $x \in X_2$ and let $m = \pi_{X_2/M}(x), m' = \rho_{X_2/M}(x)$ be the
corresponding manifold points.
Then if requirement \ref{i:iso_star} of condition $\star$ holds for $x$ then
$(\pi_{X_2/M})_*$ and $(\rho_{X_2/M})_*$ are injective.
In particular, around $x$, $\dim X_2 \leq 2$.
\end{lma}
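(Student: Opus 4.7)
The plan is to compute the two tangent spaces $T_x Y_5$ and $T_x Z_5$ explicitly, intersect them, and read off injectivity of both differentials directly from the hypothesis $\rank S_m = \rank S_{m'} = 2$.

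I would start by parametrizing $Y_5$ via $(g, m) \mapsto (g, m, N_m)$ and $Z_5$ via $(g, m') \mapsto (g, g \circ m', N_{m'})$. At the point in question, writing $n := N_m = N_{m'}$ and using both $T_m M = T_{m'} M = n^\perp$ together with $dN = -S$, differentiating the two parametrizations yields
\[
T_x Y_5 = \{(v, w, -S_m w) : v \in T_g G,\ w \in n^\perp\},
\]
\[
T_x Z_5 = \{(v', v' + w', -S_{m'} w') : v' \in T_g G,\ w' \in n^\perp\}.
\]
A vector in the intersection must satisfy $v = v'$, $w = v + w'$ and $S_m w = S_{m'} w'$, so
\[
T_x X_2 \subseteq \{(w - w', w, w') : w, w' \in n^\perp,\ S_m w = S_{m'} w'\}.
\]
Under this identification $(\pi_{X_2/M})_*$ is the projection on the $w$-factor, while $(\rho_{X_2/M})_*$ is the projection on the $w'$-factor (the latter because $\rho_{X_2/M}(g, y, n) = g^{-1} \circ y$ differentiates to $w - v$, which the constraint $v = w - w'$ simplifies to $w'$).

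The injectivity then becomes a one-line check. Suppose $w = 0$; then $S_{m'} w' = S_m w = 0$, and since $S_{m'}$ has rank $2$ on the $2$-dimensional plane $n^\perp$, we conclude $w' = 0$ and hence $v = w - w' = 0$. The symmetric argument settles $(\rho_{X_2/M})_*$. In particular $T_x X_2$ injects into the $2$-dimensional space $T_m M$, which gives the local dimension bound $\dim X_2 \leq 2$ near $x$.

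The only real obstacle is bookkeeping: matching the two parametrizations of the common point $x$, and reading off the differential of $\rho_{X_2/M}$ correctly in the common tangent-space coordinates. Once both differentials are shown to be the projections on the respective $w$ and $w'$ factors of the intersection, everything reduces to the invertibility of the two shape operators on the plane $n^\perp$, which is precisely what requirement \ref{i:iso_star} supplies.
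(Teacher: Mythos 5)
Your proof is correct and follows essentially the same route as the paper: both arguments come down to the fact that a tangent vector killed by $(\pi_{X_2/M})_*$ forces the $G$-component (equivalently your $w'$) into $\ker S_{m'}$, which is trivial by requirement (1) of $\star$. Your version merely organizes the computation by writing out $T_xY_5$ and $T_xZ_5$ explicitly and intersecting, whereas the paper differentiates the identity $\pi_{X_2/M}=\eta_{\pi_{X_2/G}}\circ\rho_{X_2/M}$ directly; the content is identical.
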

\begin{proof}
  Let $v\in\ker{\pi_{X_2/M}}_*|_{T_xX_2}$.
  So, the image of $v$ under the map
  \[
  (\pi_{X_2/G},\pi_{X_2/M},\pi_{X_2/\BS^2})_*
  \colon  T_x X_2\to T_{g} G \oplus T_{m} M \oplus T_{s} \BS^2
  \]
  is of the form $(h, 0, t)$.

  Let $w:={\rho_{X_2/M}}_*(v)$. Then, by the chain rule applied to
  Equation \ref{equation:rho and pi translation}, $w$ is the
  derivative of $\eta_{g^{-1},\BR^3}$, at $g=h$, operating on $0$,
  which is simply $0-h$. Hence:
  \[
  0 = S_{m}(0) = t = -S_{m'}(w) = S_{m'}(h)
  \]
  i.e., $h \in \ker S_{m'}$.

  The proof of the second part is similar.
\end{proof}

\begin{lma}\label{lemma:rank piX/G translations}
  A point $p\in X_2$ satisfies requirement \ref{i:shape_star} if and only if \[\rank (\pi_{X_2/G})_* = 2.\]
\end{lma}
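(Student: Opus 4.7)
The plan is to compute $T_p X_2 = T_p Y_5 \cap T_p Z_5$ explicitly and then read off the rank of its projection to $T_g G$. Set $n := N_m = N_{m'}$; since $G = \BR^3$ acts by translation, whose derivative is the identity on tangent vectors, I will canonically identify $T_m M = T_{m'} M = T_n \BS^2 = n^\perp \subseteq \BR^3$ throughout.

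The first step is infinitesimal. Using the standard parametrizations of $Y_5$ and $Z_5$, a tangent vector to $Y_5$ at $p$ takes the form $(\alpha, \beta, -S_m \beta)$ with $\alpha \in T_g G$ and $\beta \in n^\perp$, and a tangent vector to $Z_5$ takes the form $(\alpha, \alpha + \beta', -S_{m'}\beta')$ with $\alpha \in T_g G$ and $\beta' \in n^\perp$. Equating the $\BR^3$--components gives $\beta = \alpha + \beta'$, which forces $\alpha \in n^\perp$ (since $\beta, \beta' \in n^\perp$) together with $\beta' = \beta - \alpha$; equating the $\BS^2$--components then reduces, after substituting for $\beta'$, to the single linear equation
\[ (S_{m'} - S_m)\beta \;=\; S_{m'}\,\alpha. \]
I therefore identify $T_p X_2$ with the subspace of pairs $(\alpha,\beta) \in n^\perp \oplus n^\perp$ satisfying this equation, and $(\pi_{X_2/G})_*$ with the coordinate projection $(\alpha,\beta)\mapsto\alpha$. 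In particular, the image lies in $n^\perp$, so the rank is a priori at most $2$.

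With this in hand, the equivalence reads off. Requirement \ref{i:shape_star} is precisely the statement that $(S_{m'}-S_m)|_{n^\perp}$ is a bijection; if it holds, then for any $\alpha \in n^\perp$ the pair $(\alpha,(S_{m'}-S_m)^{-1}(S_{m'}\alpha))$ lies in $T_pX_2$ and projects to $\alpha$, so the projection is surjective onto $n^\perp$ and has rank $2$. Conversely, if $\rank(\pi_{X_2/G})_* = 2$ then the image is all of $n^\perp$, so the equation is solvable in $\beta$ for every $\alpha \in n^\perp$, giving the inclusion $S_{m'}(n^\perp) \subseteq (S_{m'}-S_m)(n^\perp)$. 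Using the companion non-degeneracy \ref{i:iso_star}, which makes $S_{m'}|_{n^\perp}$ itself surjective onto $n^\perp$, this inclusion upgrades to $(S_{m'}-S_m)(n^\perp) = n^\perp$, i.e.\ requirement \ref{i:shape_star}.

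The main obstacle is the converse direction: in isolation the rank condition on $(\pi_{X_2/G})_*$ only yields the inclusion of images, which is strictly weaker than \ref{i:shape_star} in degenerate cases where both $S_m$ and $S_{m'}$ drop rank. The conclusion is cleanly extracted by appealing to the non-degeneracy provided by \ref{i:iso_star} (in force at the points of $X_2\smallsetminus X_1'$ considered throughout this section), after which the lemma reduces to the rank--nullity statement for the linear map $(\alpha,\beta)\mapsto S_{m'}\alpha - (S_{m'}-S_m)\beta$.
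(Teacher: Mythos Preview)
Your proof is correct and essentially follows the paper's approach: both arguments identify $T_pX_2$ with a linear subspace of $n^\perp\oplus n^\perp$ cut out by the single equation $S_m x = S_{m'}y$ (you use the coordinates $(\alpha,\beta)=(x-y,x)$, the paper uses $(x,y)=(\pi_{X_2/M},\rho_{X_2/M})_*$), and then read off the rank of the projection to $T_gG$. Your treatment is in fact more careful than the paper's on the converse direction: the paper only argues that requirement~\ref{i:shape_star} makes the kernel of $(\pi_{X_2/G})_*$ trivial, whereas you correctly observe that the bare statement ``rank $=2$'' only yields the inclusion $\mathrm{im}\,S_{m'}\subseteq\mathrm{im}(S_{m'}-S_m)$, which needs requirement~\ref{i:iso_star} to be upgraded to~\ref{i:shape_star} (indeed, $S_m=S_{m'}=0$ gives a counterexample otherwise); since the lemma is applied in the paper only on $X_2\smallsetminus X_1'$, where~\ref{i:iso_star} is in force, this is harmless.
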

\begin{proof}
  Let $p = (g, m, N_m) = (g, g\circ m', N_{m'})$ (where clearly $g=m-m'$).
  So the image of $p$ under the map $(\pi_{X_2/M},\rho_{X_2/M})$ is $(m, m')$.
  Our claim is that the map induced from the composition of maps
  \[
  X\xrightarrow{p\mapsto (m, m')} M\times M\hookrightarrow \BR^3\times \BR^3\xrightarrow{(x,y)\mapsto x-y} G
  \]
  on the corresponding tangent spaces is of rank 2.
  To this end we have to show that in the compositions of linear maps below
  \[
  T_p X_2\to T_mM\oplus T_{m'}M\hookrightarrow T_m\BR^3\oplus T_{m'}\BR^3\to T_{m'-m}G,
  \]
  the image of the left map intersects trivially the kernel of
  the right map (where all the maps are the bundle
  maps induced from the maps above).

  A vector $(x, y) \in T_m \BR^3 \times T_{m'} \BR^3$, belongs to the kernel
  of the left map, if and only if $(\eta_{-m,\BR^3})_*(x) - {\eta_{m',\BR^3}}_*(y) = 0$ if and only if
  $y = g_* x$.
On the other hand, let $t \in T_{p} X_2$. Let us denote
  $x = (\pi_{X_2/M})_*(t)$, $y = (\rho_{X_2/M})_*(t)$.
  Since
  $n = N_{\pi_{X_2/M}(\ti{p})}=N_{\rho_{X_2/M}(\ti{p})}$,
  by the definition of $X_2$,
  \[
  dN_m (x) = dN_{m'} (y).
  \]
  Therefore, by requirement \ref{i:shape_star} in $(\star)$,
  $x$ and $y$ have to be trivial.
\end{proof}
\begin{lma}\label{L:gauss_map_finite}
Let $p \in \BS^2$, and assume that $p$ is not in the image of the projection of
$X_1'$ onto $\BS^2$. Then, there are only finitely many $m \in M$ such that $N_m = p$.
\end{lma}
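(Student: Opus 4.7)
The plan is a short contradiction argument that combines compactness of $M$ with requirement \ref{i:iso_star} of property $\star$. Suppose toward contradiction that $N^{-1}(p) := \{m \in M \mid N_m = p\}$ is infinite.

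First I would invoke compactness of $M$: since the Gauss map is continuous, $N^{-1}(p)$ is closed in $M$ and hence compact, and an infinite compact subset of a manifold must contain an accumulation point $m_0$. By continuity we still have $N_{m_0} = p$.

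Next I would manufacture a point of $X_2$ out of $m_0$ alone. Setting $g = 0$ and $m = m' = m_0$ in the defining expressions for $Y_5$ and $Z_5$ shows that the triple $x_0 := (0, m_0, p)$ lies in both, hence in $X_2 = Y_5 \cap Z_5$. Its $\BS^2$-component is $p$, which by hypothesis is not in $\pi_{X_2/\BS^2}(X_1')$, so $x_0 \in X_2 \setminus X_1'$. At this point I can legitimately invoke property $\star$.

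Finally I would apply requirement \ref{i:iso_star} of Definition \ref{d:property_star} at $x_0$: it yields $\rank S_{m_0} = 2$, so the differential $dN_{m_0} \colon T_{m_0} M \to T_p \BS^2$ is an isomorphism. By the inverse function theorem $N$ is a local diffeomorphism near $m_0$, so $m_0$ is isolated in $N^{-1}(p)$, contradicting that it is an accumulation point. The only step that needs a moment of care is the bookkeeping that $x_0 \in X_2 \setminus X_1'$; once that is verified, $\star$ does all the actual work and the rest is standard.
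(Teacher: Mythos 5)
Your proposal is correct and follows essentially the same route as the paper: compactness of $M$ yields an accumulation point of the fiber, the corresponding point $(0,m_0,p)$ of $X_2$ lies outside $X_1'$ by the hypothesis on $p$, and requirement \ref{i:iso_star} of $\star$ then makes the Gauss map locally injective there, a contradiction. The only difference is presentational: you argue forward from the hypothesis, while the paper phrases the same step contrapositively.
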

\begin{proof}
  Let $F = \{m \in M \mid N_m = p\}$. By the continuity of the Gauss map,
  $F$ is a closed set. Moreover, if
$F$ is infinite, then by the compactness of $M$, $F$ has an accumulation point.

But such a point would necessarily be in $X_1'$ (as otherwise, by requirement \ref{i:iso_star}, the Gauss map would be locally injective around this point), and thus $p$ belongs to the projection of $X_1'$.
\end{proof}
\begin{ntt}\label{n:m2}
  Let $M\subset\BR^3$ be a twice differentiable compact surface.
  Denote by $M_{2\times2}'\subset \BR^3\times (\BR^3\times \BS^2)^2$
  the set
  \[
  \{(t, (m_1, N_{m_1}), (m_2, N_{m_2}))|m_1,m_2\in M, m_1-m_2=t\}.
  \]
  Denote by $M_{2\times 2}\subset\BR^3\times (\BS^2)^2$ the projection of
  $M_{2\times 2}'$ on the
  product of the first $\BR^3$ and the two copies of $\BS^2$; denote the
  projection $M_{2\times2}'\to M_{2\times2}$ by $\pi_{M_{\times2}'/M_{2\times2}}$.
  Finally denote by
  $\gD_{\BS^2}$ the diagonal of $(\BS^2)^2$.
\end{ntt}
\begin{exm}[The unit sphere running example]
  If $M=\BS^2$ then $M_{2\times2}=\{(n-n',n,n')\mid n,n'\in\BS^2\}$.
\end{exm}
\begin{exm}[The torus non-example]
Let us take $M$ to be the torus, with the parametrization
\[M = \{((R + r \cos(\theta))\cdot \cos (\varphi), (R + r \cos(\theta))\cdot \sin (\varphi), r \sin(\theta) ) \mid \theta, \varphi \in [0, 2\pi)\}.\]
Computing the normals and the shape operator one can see that for $\theta$ such that $\cos \theta \neq 0$ (i.e.\ not the top or bottom circle), the Gauss map is 2-to-1 and the shape operator is of full rank. In particular, $X_2$ is two dimensional. For every $a, b \in M$ such that $N_a = N_b$ and they are not in the top or bottom circle,
\[a - b = (2 R \cos \varphi, 2 R \sin \varphi, 0)\]
for some $\varphi \in [0, 2 \pi)$. This implies that $M$ fails to satisfy $(\star)$ (as the projection to $G$ is of rank $1$).

Indeed, proposition \ref{p:m2} ahead fails for the torus. For any fiber, expect a small set, we will obtain a pair of tori intersecting on a circle.
\end{exm}

Let $g \in \BR^3$. Let $\ti{M} = g + M$, the action of $g$ on $M$. It is clear that applying the definition of $M_{2 \times 2}$ for $g+ M$ would result in the same object, since the normals remain unmodified by the translation and so are the differences between pairs of points. Thus we get:
\begin{prp}\label{p:trans_inv}
  the construction of $M_{2\times 2}$ is invariant under the actions of
  translations on $\BR^3$.
\end{prp}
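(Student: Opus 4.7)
The proof should be essentially a direct unwinding of the definition. The plan is to observe that every ingredient used in the construction of $M_{2\times 2}$ is itself translation-invariant, and then formally transport this through the definitions of $M_{2\times 2}'$ and its projection.

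First I would fix $g\in \BR^3$ and set $\ti M = g + M$, writing a generic point of $\ti M$ as $g+m$ for $m\in M$. The two facts I need are: (i) for any $m\in M$, the outer unit normal satisfies $N_{g+m}\ti M = N_m M$, because the translation $\eta_{g,\BR^3}$ sends $T_m M$ isomorphically to $T_{g+m}\ti M$ and preserves orientation; and (ii) for any pair $m_1,m_2\in M$, the difference of the translates is unchanged, $(g+m_1)-(g+m_2)=m_1-m_2$. Both (i) and (ii) are immediate from the fact that $\eta_{g,\BR^3}$ is an orientation preserving isometry whose derivative at every point is the identity.

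Next I would plug these observations into Notation \ref{n:m2}. Applying the definition to $\ti M$ gives
\[
\ti M_{2\times 2}' = \{(t,(g+m_1, N_{g+m_1}\ti M),(g+m_2, N_{g+m_2}\ti M))\mid m_1,m_2\in M,\ (g+m_1)-(g+m_2)=t\},
\]
and using (i) and (ii) this simplifies to
\[
\ti M_{2\times 2}' = \{(t,(g+m_1, N_{m_1}M),(g+m_2, N_{m_2}M))\mid m_1,m_2\in M,\ m_1-m_2=t\}.
\]
Thus $\ti M_{2\times 2}'$ differs from $M_{2\times 2}'$ only in the $\BR^3$-components of the second and third factors, which are shifted by $g$.

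Finally, the projection $\pi_{M_{\times 2}'/M_{2\times 2}}$ discards precisely these two $\BR^3$-components, retaining only the first $\BR^3$ factor (the difference $t$) and the two $\BS^2$ factors (the normals). Since neither is altered, $\ti M_{2\times 2} = M_{2\times 2}$, proving the claim. No real obstacle arises; the only thing to be careful about is the bookkeeping of which $\BR^3$ factors are being projected away, and being explicit that the Gauss map on $\ti M$ at $g+m$ returns the same vector in $\BS^2$ as the Gauss map on $M$ at $m$.
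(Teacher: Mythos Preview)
Your proposal is correct and follows exactly the same approach as the paper: the paper's argument (given in the paragraph immediately preceding the proposition) simply observes that the normals are unchanged by translation and so are the differences between pairs of points, hence the projection defining $M_{2\times 2}$ yields the same set. Your version is just a more detailed unwinding of this same observation.
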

We are now ready to state the main result of this section.
\begin{prp}\label{p:m2}
  Suppose $M$ satisfies property $\star$, then there is an open dense
  subset $W\subset \BS^2$, of full measure (relative to the image of the Gauss map),
  so that for any point $p\in W$, the intersection
  of $M_{2\times2}$ with $\BR^3\times\{p\}\times \BS^2$ is non-empty and
  composed of a finite number
  of translations of $M$, $\{g_1\circ M, \dots, g_n\circ M\}$, intersecting at most
  on isolated points along the pullback of the diagonal and its antipodal image, $\gD_{\BS^2}\subset(\BS^2)^2$,
  under the projection from $\BR^3\times(\BS^2)^2$ on the second component.

  Moreover, each of these copies is continuous in $p$, in the
  sense that the set of $g_i\in G$ is continuous in $p$.
\end{prp}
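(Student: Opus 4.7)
The plan is to build $W$ as the complement, inside the image of the Gauss map, of a bad set of $\mathcal{I}$-dimension $\leq 1$, then to read off the fibre decomposition directly from Notation \ref{n:m2} and verify the intersection behaviour using property $\star$ together with a dimension count. The subtle point is bounding the set of $p$'s for which off-diagonal intersections between distinct copies occur; once that is handled everything else is bookkeeping.

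For the bad set, let $B_1 \subseteq \BS^2$ be the image of $X_1'$ under the projection to $\BS^2$; by property $\star$ it has $\mathcal{I}$-dimension $\leq 1$, and by Lemma \ref{L:gauss_map_finite} the Gauss fibre $N^{-1}(p)$ is finite for $p\notin B_1$. To suppress off-diagonal intersections consider
\[
S := \{(m_1,m_1',m_2,m_2') : N_{m_1}=N_{m_1'},\ N_{m_2}=N_{m_2'},\ m_1\neq m_1',\ m_1'-m_1 = m_2'-m_2\},
\]
where the points range over $M \setminus \pi_{X_2/M}(X_1')$. By property $\star$ each same-normal pair with $m\neq m'$ belongs to a $2$-dimensional set (the parameter $m$ together with a discrete choice of $m'$ in the finite Gauss fibre over $N_m$), so $S$ sits inside a $4$-dimensional set cut by the $3$-dimensional linear constraint $m_1'-m_1 = m_2'-m_2$; a transversality check using Lemma \ref{lemma:rank piX/G translations} (both ``halves'' surject onto directions in $G$) gives $\dim S \leq 1$. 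Let $B_2$ be the projection of $S \cap \{N_{m_1} \neq \pm N_{m_2}\}$ onto the first normal; then $B_2 \subseteq \BS^2$ has $\mathcal{I}$-dimension $\leq 1$. Set $W := N(M) \setminus (B_1 \cup (-B_1) \cup B_2)$; outside $B_1$ the Gauss map is a local diffeomorphism by property $\star$(1), so $W$ is open in $N(M)$ and of full measure.

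Fix $p \in W$ and write $N^{-1}(p) = \{g_1,\ldots,g_n\}$, finite by Lemma \ref{L:gauss_map_finite}. Directly from Notation \ref{n:m2}, the fibre decomposes as
\[
M_{2\times 2} \cap (\BR^3 \times \{p\} \times \BS^2) = \bigcup_{i=1}^n \phi_i(M), \qquad \phi_i(m) := (g_i - m,\ p,\ N_m),
\]
each $\phi_i$ being a $C^1$ injection onto a copy of $M$ --- the $g_i\circ M$ of the statement, up to the parametrisation $m\mapsto g_i-m$. Suppose $\phi_i(m)=\phi_j(m')$ with $i\neq j$. Then $N_m=N_{m'}=:q$ and $m-m'=g_i-g_j$, placing the tuple $(g_i,g_j,m,m')$ in $S$; by $p\notin B_2$ necessarily $q\in\{p,-p\}$. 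In the diagonal case $q=p$ both $m,m'\in N^{-1}(p)$, so the intersection points of $\phi_i(M)$ with $\BR^3 \times \{(p,p)\}$ form the finite set $\{(g_i-g_k,p,p) : 1\leq k\leq n\}$ and are in particular isolated; the antipodal case $q=-p$ is handled identically using the finiteness of $N^{-1}(-p)$, which is why $-B_1$ was excluded from $W$. Finally, continuity of each $g_i(p)$ in $p$ is the inverse function theorem applied to $N$ at $g_i$, whose differential $dN_{g_i} = -S_{g_i}$ is an isomorphism by $\star$(1).
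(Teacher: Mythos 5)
Your overall strategy matches the paper's (bound the set of bad fibres by a set of $\mathcal{I}$-dimension $\leq 1$, then read the fibre decomposition off Notation \ref{n:m2}), and your transversality argument for the main case is a legitimate and arguably cleaner substitute for the paper's explicit $9\times 8$ Jacobian computation: by Lemma \ref{lemma:rank piX/G translations} the two $G$-projections of $X_2\sm X_1'$ have two-dimensional images, namely $p^{\perp}$ and $q^{\perp}$, and these span $\BR^3$ exactly when $q\neq\pm p$, which is what gives the codimension-$3$ cut. Note, though, that your unqualified claim $\dim S\leq 1$ is false on the diagonal $N_{m_1}=\pm N_{m_2}$, where the constraint has rank only $2$ and $S$ contains the two-dimensional diagonal of $X_2\times X_2$; the claim is only valid for the off-diagonal part that you actually use to define $B_2$, so this is an overstatement rather than a fatal error.

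The genuine gap is that you have no analogue of the paper's Case 2. Your set $S$ is defined only over quadruples avoiding $\pi_{X_2/M}(X_1')$, so when you encounter an intersection $\phi_i(m)=\phi_j(m')$ you may place the tuple $(g_i,g_j,m,m')$ in $S$ only if $m,m'\notin\pi_{X_2/M}(X_1')$. The points $g_i,g_j$ are safe because $p\notin B_1$, but nothing prevents the pair $(m,m')$ --- equivalently the point $(m'-m,m,N_m)\in X_2$ --- from lying in $X_1'$, where property $\star$ gives no rank control and your transversality count breaks down. For such a configuration, $p\notin B_2$ yields no contradiction, so you cannot conclude $q\in\{p,-p\}$, and the fibre over $p$ could a priori contain an off-diagonal intersection. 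The paper excludes this by a separate estimate: for each fixed $x'\in X_1'$, Lemma \ref{lemma:rank piX/G translations} makes $\pi_{X_2/G}$ locally injective on the two-dimensional $X_2\sm X_1'$, so the set of $x\in X_2\sm X_1'$ with $\pi_{X_2/G}(x)=\pi_{X_2/G}(x')$ is discrete; sweeping $x'$ over the set $X_1'$ of $\mathcal{I}$-dimension $\leq 1$ gives a set of pairs of $\mathcal{I}$-dimension $\leq 1$, whose projection to the fibre normal must also be removed from $W$. You need to add this third bad set. A smaller issue: your $W$ need not be open, since images of $X_1'$ and of $S$ under projections need not be closed; the paper sidesteps this by defining $W$ as the set of fibres satisfying the conclusion (an open condition) and then bounding its complement by the sets you construct.
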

\begin{proof}
  First, the set $W$ of all $p$ satisfying the conclusion of the
  proposition is clearly open. We will show that its complement
  has $\mathcal{I}$-dimension $\leq 1$. This is enough, as the image of the Gauss
  map on $\BS^2$ has positive measure.

  Let $m_1,\ldots,m_4\in M$ be four points so that
  \[
  \begin{matrix}
  (m_1-m_2, (m_1, N_{m_1}), (m_2, N_{m_2})),&\\
    (m_3-m_4, (m_3, N_{m_3}), (m_4, N_{m_4}))&\in M_{2\times2}'
  \end{matrix}
  \]
  are both mapped to the same point in $M_{2\times2}$; i.e.\ we have:
  \begin{equation}\label{e:orig_r3_cond}
  m_1-m_2=m_3-m_4,\quad N_{m_1}=N_{m_3},\quad N_{m_2}=N_{m_4}.
  \end{equation}
  Reordering the equations, we have that
  \[x_i = (m_i - m_{i + 2}, m_i, N_{m_i}) \in X_2, \text{ for } i = 1,2.\]

  Note that the fiber $p$ is the projection to $\BS^2$ of the point $x_2$.

  Let us bound from above the solution set to this equation, partitioning to
  three cases, where at each case $c$ we obtain a closed set $F_c$,
  of $\mathcal{I}$-dimension $\leq 1$ such that
  for every $p \notin F_c$, the corresponding case behaves nicely:

  {\em Case 1:} The diagonal case; i.e.\ $N_{m_1}=\pm N_{m_2}$ (and then
  $N_{m_3}=\pm N_{m_4}$). Let
  \[F_1 = \pi_{X_2/\BS^2}(X_1') \cup a \circ \pi_{X_2/\BS^2}(X_1'),\]
  where $a(x) = -x$ is the antipodal map.
   This is a set of $\mathcal{I}$-dimension $\leq 1$.

  For every $p \notin F_1$, by lemma \ref{L:gauss_map_finite}, the fiber
  of the Gauss map is finite.
  Note that we know the location of the intersection points beforehand,
  since they are on the pullbacks of the diagonals.

  {\em Case 2:}
  Assume $p \notin F_1$. So, we have (by the definition) that $x_2 \notin X_1'$.
  Further assume that $x_1\in X'_1$.
  We rearrange the left equality of Equation \ref{e:orig_r3_cond}, to the
  form: $m_1 - m_3 = m_2 - m_4 \in \mathbb{R}^3$. Since on
  $X_2 \sm X_1'$ the projection to $\mathbb{R}^3$ is of rank $2$ by lemma
  \ref{lemma:rank piX/G translations} we conclude that the
  sub-manifold of solutions to
  this equation on $X_2 \times X'_1$ is at most of one $\mathcal{I}$-dimensional,
  and more precisely for
  each point $x' \in X'_1$, the set of $x \in X_2 \sm X'_1$
  such that $(x', x)$ corresponds to a
  solution of Equation \ref{e:orig_r3_cond} consists
  of isolated points, and thus at most countable.
  Thus, this case contributes at most one $\mathcal{I}$-dimensional set to the solutions
  of Equation \ref{e:orig_r3_cond}. Let $F_2$ be the projection of this
  set to $\BS^2$.

  {\em Case 3:}
  $x_1, x_2 \in X_2\sm X'_1$,
  and the corresponding normals are different (since we have already dealt
  with the diagonal case).
  There is a
  chart $U\subset\BR^2$ and a chart-map $f\colon U\to M$, so that $m_i\in f(U)$.
  Let $p_1,\ldots,p_4\in U$ be four points so that $f(p_i)=m_i$.
  The points in $M_{2\times2}$ for which
  $M_{2\times2}'\to M_{2\times2}$ is not one to one -- and
  where all four relevant points of $M$ are in $U$ -- are
  in one to one correspondence
  with the $f$ images of the solutions set of
  {\small
  \begin{equation}\label{e:r3cond}
    f(q_1)-f(q_2)=f(p_3)-f(q_4), \quad \frac{\nabla f|_{q_1}}{\|\nabla f|_{q_1}\|}
    =\frac{\nabla f|_{q_3}}{\|\nabla f|_{q_3}\|},\quad
    \frac{\nabla f|_{q_2}}{\|\nabla f|_{q_2}\|}
    =\frac{\nabla f|_{q_4}}{\|\nabla f|_{q_4}\|},
  \end{equation}
  }
  for $(q_1, q_2, q_3, q_4)\in U^4$.
  Denote by $A\subset M^4$ the solution set of Equation \ref{e:r3cond},
  corresponding to the {\em current case},
  and let $V$ be the pre-image of $A$ in $U^4$.
  Define the map:
  \[
  \begin{aligned}
    \ti{f}:V&\to \BR^3\times \BR^3\times \BR^3\\
    (q_1,q_2,q_3,q_4)&\mapsto  \big(f(q_1)-f(q_2) - f(q_3) + f(q_4),\\
    \ &\qquad\qquad N_{f(q_1)} - N_{f(q_3)}, N_{f(q_2)} - N_{f(q_4)} \big).
  \end{aligned}
  \]
  Note that by lemma \ref{L:prop_star1_is_regualrity} of $\star$ we may assume
  without loss of generality (via local change of
  coordinates), that $p_i=(x_i, y_i)$ are such that:
  \[
  \frac{\partial f}{\partial x_1, y_1}|_{p_1}=\frac{\partial f}{\partial x_3, y_3}|_{p_3},
  \quad
  \frac{\partial f}{\partial x_2, y_2}|_{p_2}=\frac{\partial f}{\partial x_4, y_4}|_{p_4}
  \]
  Let us compute the Jacobian matrix of this map about $p_1,\ldots,p_4$
  with respect to local coordinates
  $x_i, y_i$ about the $p_i$-s.
  This is the 9 rows by 8 columns matrix:
  \[J\ti{f}=\text{rows reorder}\begin{pmatrix}
  0_{3\times 2} &\frac{\partial N_{m_2}}{\partial x_2, y_2}&
  0_{3\times 2} &-\frac{\partial N_{m_4}}{\partial x_4, y_4}\\
  \frac{\partial N_{m_1}}{\partial x_1, y_1}&0_{3\times 2}&
  -\frac{\partial N_{m_3}}{\partial x_3, y_2}&0_{3\times 2}\\
  \frac{\partial f}{\partial x_1, y_1}|_{p_1}&
  -\frac{\partial f}{\partial x_2, y_2}|_{p_2}&
  -\frac{\partial f}{\partial x_3, y_3}|_{p_3}&
  \frac{\partial f}{\partial x_4, y_4}|_{p_4}
  \end{pmatrix},\]
  where each entry stands for a 3 rows by 2 columns sub matrix.
  Let us concentrate on the two left-most columns (i.e.\ the left
  ``block column'') above: The bottom rows are given by
  $(\frac{\partial f}{\partial x_1}|_{p_1},\frac{\partial f}{\partial y_1}|_{p_1})$,
  which span $T_{m_1}M$. As for the middle columns, they are, by definition,
  $-S_{m_1}(\frac{\partial f}{\partial x_1}|_{p_1}),
  -S_{m_1}(\frac{\partial f}{\partial y_1}|_{p_1})$,
  where $S$ is the shape operator (i.e.\ $-dN$).
  Adding columns 1-4 to columns 5-8 and eliminating the (trivial) first and
  forth rows, columns 5-8 become:
  \begin{equation}\label{e:submatrix_translation}\begin{pmatrix}
  0_{2\times 2}&(S_{m_1}-S_{m_3})(\frac{\partial f}{\partial x_1, y_1}|_{p_1})\\
  (S_{m_2}-S_{m_4})(\frac{\partial f}{\partial x_2,y_2}|_{p_2})&0_{2\times 2}\\
  0_{3\times 2}&0_{3\times 2}
  \end{pmatrix},\end{equation}
  where again, each of the blocks in the matrix is a 3 rows by 2 columns matrix.
  We now observe that the pair $f(p_1), f(p_2)$ (respectively $f(p_3), f(p_4)$)
  corresponds to a point on $X_2$; i.e.\ this pair sits on some surface in $U^2$.
  Furthermore,
  by condition \ref{i:shape_star} of property $\star$, the top three rows
  (respectively the
  middle three rows) of this matrix have rank $2$ on a neighborhood of
  $(p_1, p_3)$ (respectively $(p_2, p_4)$) out of some curve in $U^2$.
  Finally, the lower left 3 rows by 4 columns block of $J\ti{f}$ is of rank
  $3$ out of the pullback under the Gauss map (on both copies of $\BS^2$) of
  $\gD_{\BS^2}$.

  By the analysis above, the rank of the Jacobian is 7,
  and therefore the solution space is a collection of curves in $U^4$.
  Taking a suitable countable cover of $M$, outside the sets from Cases 1 and 2,
  we conclude that this case consists of at most countable many
  one dimensional curves in $M^4$, and thus it is a set
  of $\mathcal{I}$-dimension $\leq 1$ (i.e.\ the set is contained in a
  countable union of smooth images of the segment $[-1, 1]$).

  Let $F_3$ be the projection of the solution set of
  Equation \ref{e:r3cond}, under this case, to $\BS^2$, via $N_{m_1}$.

  Fix $n \in \BS^2$ in the open set away from $\bigcup F_c$. Then, automatically,
  any solution for Equation \ref{e:r3cond}, $(p_1, p_2, p_3, p_4)$ with
  $N_{f(p_1)}= N_{f(p_3)} = n$ has to be in the pullback of $\Delta_{\BS^2}$,
  as wanted.

  The ``moreover'' part of the claim follows straightforward from continuity.
\end{proof}

\begin{rmr}\label{remark:dimension-of-m2x2}
Let $M$ be a compact smooth manifold. Then $M_{2 \times 2}$ is composed of a compact manifold of dimension $\leq 4$, together with a compact set which is a projection of a manifold of positive co-dimension in $M\times M$.

If $M$ satisfies $\star$, then $M_{2 \times 2}$ contains a four dimensional manifold.
\end{rmr}
\begin{proof}
Consider the map $M \times M \to M_{2 \times 2}$ given by
\[(m,m')\mapsto (m - m', N_m, N_{m'}).\]

A vector $(t, t') \in T_m M \times T_{m'} M$ belongs to the kernel of the map
induced on the tangent spaces by the above map, which happens if and only if:
\[t - t' = 0, \quad S_m(t) = 0, \quad S_{m'}(t') = 0.\]
Thus, we can compute the rank of this map, by splitting into cases:
\begin{enumerate}
\item The map is an immersion if:
\begin{enumerate}
\item $\rank S_m = 2$ or $\rank S_{m'} = 2$, or
\item $\rank S_m = \rank S_{m'} = 1$ and $\ker S_m \cap \ker g_*S_{m'} = \{0\}$, for $g = m' - m$.
\end{enumerate}
\item Otherwise, it is of rank 3 if:
\begin{enumerate}
\item there is a non-zero $t \in T_m M$ such that $S_m(t) = g_*S_{m'}(t) = 0$, and $\rank S_m = 1$ or $\rank S_{m'} = 1$ ($g = m' - m$), or
\item $S_m = 0$ and $S_{m'} = 0$ but $N_m \neq N_{m'}$.
\end{enumerate}
\item It is of rank 2 if $S_m = 0$ and $S_{m'} = 0$ and $N_{m} = N_{m'}$.
\end{enumerate}

The points corresponding to Case 1 would contribute a four dimensional manifold (besides the self intersections). Similarly, the points corresponding to Case 2 would contribute at most a three dimensional manifold, and the points corresponding to Case 3 would contribute a two dimensional manifold.

In particular, assuming $\star$, by requirement \ref{i:iso_star}, in order to avoid the first case, both $m$ and $m'$ are in $\pi_{X_2/M}(X_1')$, which is (at most) one dimensional by $\star$. So, outside a closed set of co-dimension $2$, this map is an immersion. Moreover, by proposition \ref{p:m2}, outside this closed set, the self intersections are given by a finite union of two-dimensional sets (Case 1) and one dimensional sets (Case 2 and Case 3).

Thus $M_{2\times 2}$ consists of a four dimensional manifold, on which the map from some open subset of $M_{2 \times 2}'$ is an embedding, and finitely many lower dimensional sets of singularities.
\end{proof}
As alluded to in the beginning of this section, the above analysis, which is
sufficient for our goal is far from being optimal.
Indeed, the map from $M\times M$ to $M_{2\times 2}$
is expected to be an immersion at all points except a one dimension
curve on the diagonal (of points of the form $(m, m)$ and isolated
points outside of the diagonal).

Moreover, the proof shows that in order for $M_{2 \times 2}$ to have
dimension $\leq 3$, $M$ has to be flat ($\det S_m = 0$ for all $m \in M$).
%
\section{The action of \texorpdfstring{$SO(3)$}{SO(3)} on \texorpdfstring{$\BR^3\times \BS^2\times \BS^2$}{R3S2S2}}\label{S:trivializing}
%
Before proceeding to the rotational parallel of property $\star$,
we need to analyze more carefully for which points in $\BR^3\times \BS^2\times \BS^2$
the action of $SO(3)$ is free, and splits nicely. All the results in this
section are either classical results, or immediate consequences of ones.

\begin{dfn}
  Denote by $\gD_{(\BS^2)^{3}}\subset (\BS^2)^{3}$ the set
  of $(p_0, p_1, p_2)$ so that $p_0 = \pm p_1$.
\end{dfn}
\begin{prp}
  The group $SO(3)$ acts freely on
  $(\BS^2)^3 \sm\gD_{(\BS^2)^3}$.
\end{prp}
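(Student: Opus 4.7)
The plan is to reduce the statement to the well-known fact that a non-identity element of $SO(3)$ is a rotation about some axis, and hence fixes exactly two points on $\BS^2$, namely the antipodal pair where the axis meets the sphere.

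Suppose $g \in SO(3)$ stabilizes a triple $(p_0, p_1, p_2) \in (\BS^2)^3 \sm \gD_{(\BS^2)^3}$. By definition of $\gD_{(\BS^2)^3}$, the first two coordinates satisfy $p_0 \neq \pm p_1$. I would first note that $g p_0 = p_0$ and $g p_1 = p_1$. If $g$ were not the identity, then by the Euler/spectral characterization of elements of $SO(3)$ (every element is a rotation through some angle $\theta \neq 0 \pmod{2\pi}$ about a unique oriented axis), the fixed point set of $g$ on $\BS^2$ would consist of exactly the two antipodal points $\pm v$, where $v$ is a unit vector spanning the axis. Hence both $p_0$ and $p_1$ would lie in $\{v, -v\}$, forcing $p_0 = \pm p_1$, contradicting our assumption.

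Therefore $g = \operatorname{id}$, which is exactly the statement that the $SO(3)$-action is free on the complement of $\gD_{(\BS^2)^3}$. Note that the coordinate $p_2$ plays no role in the argument, consistent with the definition of $\gD_{(\BS^2)^3}$ depending only on $(p_0, p_1)$.

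I do not anticipate any serious obstacle: the argument is essentially a one-line invocation of the structure theory of $SO(3)$ (equivalently, the fact that $\dim SO(3) = 3$ while the stabilizer of a single point of $\BS^2$ is the one-dimensional subgroup of rotations fixing that axis, and the intersection of two such stabilizers for independent axes is trivial). The only minor point worth remarking is why we must exclude the antipodal case $p_0 = -p_1$ as well as $p_0 = p_1$: a rotation about the axis through $p_0$ fixes both $p_0$ and $-p_0$, so stabilizing both $p_0$ and $-p_0$ does not pin down $g$. Excluding $p_0 = \pm p_1$ ensures $\{p_0, p_1\}$ does not lie on a common axis and the argument goes through.
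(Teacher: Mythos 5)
Your proof is correct and follows the same route as the paper, which simply observes that since $p_0 \neq \pm p_1$ the action on the pair $(p_0, p_1)$ is already free; you merely spell out the underlying fact (a non-identity rotation fixes exactly one antipodal pair on $\BS^2$) that the paper leaves implicit.
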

\begin{proof}
  Since by the assumption, $p_0, p_1$ are not identical or antipodal, $SO(3)$ acts freely
  on the orbit of $(p_0, p_1)$ in $\BS^2\times \BS^2$. The claim follows.
\end{proof}
As we are going to see in a moment, the action of $SO(3)$
is free on a much larger subset
of $(\BS^2)^3$. Nevertheless, removing the set
$\gD_{(\BS^2)^{3}}$ would allow us to represent $(\BS^2)^3$ as
a trivial $SO(3)$-bundle and obtain a section, while removing the
optimal set (namely, $\{(p_0,p_1,p_2) \mid p_0 = \pm p_1 = \pm p_2\}$),
seems to be insufficient for that (see Remark \ref{r:d1-is-optimal}).
\begin{prd}\label{prd:s2cubed_fibers}
  The fibers of
  \[
  \begin{aligned}
    \iota_3 \colon (\BS^2)^{3}&\to [-1,1]^4\\
    (p_0, p_1, p_2)&\mapsto(\langle p_1, p_2\rangle, \langle p_0, p_2\rangle, \langle p_0, p_1\rangle, \det (p_0, p_1, p_2)),
  \end{aligned}
  \]
  are exactly the orbits of $(\BS^2)^3$ under the $SO(3)$ action.
\end{prd}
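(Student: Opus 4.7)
One direction is trivial: elements of $SO(3)$ preserve all inner products and the determinant of a $3\times 3$ matrix, so they preserve every coordinate of $\iota_3$. The substantive direction is the converse: given two triples with the same image under $\iota_3$, construct an $A \in SO(3)$ sending one to the other.

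The plan is to translate the hypothesis into linear-algebraic form. Assemble the triples into $3 \times 3$ matrices $P := [p_0 \mid p_1 \mid p_2]$ and $Q := [q_0 \mid q_1 \mid q_2]$; then the first three coordinates of $\iota_3$, together with the unit-length condition $\langle p_i, p_i\rangle = 1 = \langle q_i, q_i\rangle$, are precisely the statement $P^TP = Q^TQ$, while the fourth coordinate gives $\det P = \det Q$. In the generic case $\det P \neq 0$, I would set $A := Q P^{-1}$ and check in two lines that $A^T A = (P^T)^{-1}(Q^TQ)P^{-1} = I$, so $A \in O(3)$, and that $\det A = \det Q / \det P = 1$, so $A \in SO(3)$, with $A p_i = q_i$ by construction.

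The degenerate case $\det P = 0$ takes a little more care. I would first observe that $\ker P^T P = \ker P$ (since $P^TPc = 0$ gives $|Pc|^2 = c^T P^T P c = 0$), so $\rank P^TP = \dim\spn\{p_0, p_1, p_2\}$, and the two spans therefore have the same dimension, which is $1$ or $2$ because the vectors are unit. In the $2$-dimensional subcase, choose a maximal linearly independent subset, say $p_0, p_1$; then $q_0, q_1$ are independent too, and the equality $\ker P = \ker Q$ (again via the Gram matrix) forces any expression $p_2 = \alpha p_0 + \beta p_1$ to hold verbatim as $q_2 = \alpha q_0 + \beta q_1$. Pick unit normals $n_p, n_q$ to the two planes, form the invertible augmented matrices $P' := [p_0 \mid p_1 \mid n_p]$ and $Q' := [q_0 \mid q_1 \mid n_q]$ — choosing the sign of $n_q$ so that $\det P' = \det Q'$ — and apply the generic case to produce $A \in SO(3)$ with $A p_i = q_i$ for $i = 0, 1$; the linear relation automatically gives $Ap_2 = q_2$. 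The $1$-dimensional subcase is essentially trivial: both triples are of the form $(\epsilon_0 p, \epsilon_1 p, \epsilon_2 p)$ and $(\epsilon_0 q, \epsilon_1 q, \epsilon_2 q)$ with the same sign pattern $\epsilon_i \in \{\pm 1\}$ (determined by the inner products), and any rotation sending $p$ to $q$ does the job.

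The main obstacle is the $2$-dimensional subcase: one must verify that the freedom in choosing the sign of the auxiliary normal $n_q$ is really enough to land in $SO(3)$ rather than merely $O(3)$. This works precisely because the fourth coordinate of $\iota_3$ vanishes on both sides in the degenerate case, so the determinant invariant imposes no obstruction; the two sign choices for $n_q$ correspond exactly to the two cosets of $SO(3)$ in $O(3)$, and one of them must succeed. The $1$-dimensional subcase and the generic case are routine by comparison.
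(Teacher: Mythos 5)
Your proof is correct, but it takes a genuinely different route from the paper's. You encode the hypothesis as an equality of Gram matrices, $P^TP=Q^TQ$ (the off-diagonal entries being the three inner products and the diagonal being forced by unit length), together with $\det P=\det Q$, and you produce the rotation directly as $A=QP^{-1}$ in the nondegenerate case; the rank~$2$ and rank~$1$ cases are handled by augmenting with a unit normal, where the free sign of $n_q$ absorbs the $O(3)$-versus-$SO(3)$ ambiguity --- harmless precisely because the determinant invariant vanishes there. The paper instead argues in coordinates: when $p_0\neq\pm p_1$ it rotates the pair $(p_0,p_1)$ into a canonical position in the $xy$-plane and solves explicitly for $p_2$ from the inner products, with the determinant fixing the sign of the last coordinate; the remaining cases are split according to whether $p_1=\pm p_0$ and $p_2=\pm p_0$ (note your case decomposition, by $\rank P$, does not coincide with the paper's: e.g.\ $p_1=\pm p_0$ with $p_2$ independent is rank~$2$ for you but a separate case for them). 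Your argument is shorter and more conceptual --- it is the standard fact that the Gram matrix plus an orientation datum determines a configuration up to rotation --- but the paper's computation is deliberately explicit: the canonical position of $(p_0,p_1,p_2)$ constructed in its first case is reused in Proposition-Definition \ref{prd:d_s2cubed} as the smooth section $s\colon I_3\sm D_1\to(\BS^2)^3$ that trivializes the $SO(3)$-bundle, which is why the authors announce that they will not omit the technical details. Your formula $A=QP^{-1}$ does not by itself produce that section, though it could easily be adapted to do so.
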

\begin{proof}
  The proof is standard. Since we are going to use later some of
  the technical details of the proof, we will not omit it.

  Clearly, the value of the function does not change under
  the action of $SO(3)$. Let $\iota_3(p_0, p_1, p_2) = \iota_3(p_0', p_1', p_2')$.
  We would like to find a rotation moving $(p_0, p_1, p_2)$ to $(p_0',p_1',p_2')$.

  Let us first assume that $\alpha = \langle p_0, p_1\rangle \in (-1, 1)$.
  Then, by rotating, we may assume that $p_0 = p_0'$ is the point
  $(1,0,0)$ and $p_1 = p_1'$ is $(\alpha, \sqrt{1 - \alpha^2}, 0)$.

  Denote the inner products
  $\beta= \langle p_0, p_2\rangle$ and $\gamma= \langle p_1, p_2\rangle$.
  Let $p_2 = (x,y,z)$, we get the equations:
  \[\begin{matrix}
   x & = & \beta\\
   \alpha x + \sqrt{1 - \alpha^2} y & = & \gamma\\
   x^2 + y^2 + z^2 &=& 1
  \end{matrix}  \]
  This equations determine $x, y$ and $z^2$ completely (since $\alpha \neq \pm 1$).

  Thus, we obtain two possible
  values for $p_2$, which differ only in the sign of their last coordinate.
  Those two values provides determinants which differ only by sign.
  Moreover, the determinant is zero if and only if $z = 0$ which means that
  there is a unique value for $p_2$.
  Thus, we conclude that under this rotation, the
  value of $p_2$ is determined.

  Let us now deal with the case that $p_1 = \pm p_0$ and $p_2 \neq \pm p_0$.
  In this case, we can rotate and assume without loss of generality
  that $p_0=p_0'=(1,0,0)$ and
  $p_2 = p_2'= (\beta, \sqrt{1 - \beta^2}, 0)$.
  Then, by the inner product relation, we get
  $p_1 = p_1' = (\alpha, 0, 0)$ (where $\alpha = \pm 1$).

  Finally, if $p_1 = \pm p_0, p_2 = \pm p_0$ then after any rotation that makes $p_0$ and
  $p_0'$ to be equal, we get that $p_1 = p_1'$ and $p_2 = p_2'$,
  since the signs are determined by the inner products.
 \end{proof}
\begin{prd}\label{prd:d_s2cubed}
  Let $I_3 \subseteq [-1,1]^{4}$ be the image of $\iota_3$.
  Denote by $D_1$ the image of $\gD_{(\BS^2)^{3}}$ under the map $\iota_3$ from
  \ref{prd:s2cubed_fibers}, then
  $D_1 \subseteq \{-1, 1\} \times [-1, 1]^2 \times \{0\}$.
  Moreover, the $SO(3)$ bundle induced by the map
  \[
  (\BS^2)^{3}\sm \gD_{(\BS^2)^{3}}\to I_3\sm D_1
  \]
  is trivial.
\end{prd}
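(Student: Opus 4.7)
The first assertion is a direct computation: if $(p_0, p_1, p_2) \in \gD_{(\BS^2)^{3}}$, then $p_0 = \pm p_1$, so $\langle p_0, p_1\rangle \in \{-1, 1\}$ and the first two rows of the matrix $(p_0, p_1, p_2)$ are parallel, forcing $\det(p_0, p_1, p_2) = 0$. The remaining two coordinates of $\iota_3$ automatically lie in $[-1, 1]$ by Cauchy--Schwarz, yielding the containment.

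For the triviality of the bundle, my plan is to exhibit an explicit smooth section
\[
s \colon I_3 \sm D_1 \to (\BS^2)^{3} \sm \gD_{(\BS^2)^{3}}.
\]
Combining the previous proposition (freeness of the $SO(3)$-action on the complement of $\gD_{(\BS^2)^{3}}$) with Proposition-Definition \ref{prd:s2cubed_fibers} (fibers of $\iota_3$ equal to orbits), the map in question is a principal $SO(3)$-bundle. Any continuous section of a principal bundle trivializes it via $(g, x) \mapsto g \cdot s(x)$, so it will suffice to build one.

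To construct $s$, I will reuse the normal form appearing in the proof of Proposition-Definition \ref{prd:s2cubed_fibers}. Given $(a, b, c, d) \in I_3 \sm D_1$, the exclusion of $D_1$ forces $c \in (-1, 1)$, and I set
\[
p_0 := (1, 0, 0), \quad p_1 := \left(c,\, \sqrt{1 - c^2},\, 0\right), \quad p_2 := \left(b,\, \tfrac{a - bc}{\sqrt{1 - c^2}},\, \tfrac{d}{\sqrt{1 - c^2}}\right).
\]
A short direct calculation verifies that $\iota_3(p_0, p_1, p_2) = (a, b, c, d)$ (indeed $\det(p_0, p_1, p_2) = \sqrt{1-c^2}\cdot z$, so $z = d/\sqrt{1-c^2}$); the normalization $|p_2|^2 = 1$ then follows automatically from the consistency of $(a, b, c, d)$ being in $I_3$.

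The main --- essentially the only --- technical point is that each coordinate of $s$ is smooth on $I_3 \sm D_1$. This is immediate from the formulas above, since the denominator $\sqrt{1-c^2}$ is smooth and nonzero precisely when $c \in (-1, 1)$; this is what forces us to remove exactly the locus where $c = \pm 1$, and explains why the slightly larger set $\gD_{(\BS^2)^{3}}$ (rather than the optimal antipodal-and-identity locus) must be discarded to obtain a global trivialization.
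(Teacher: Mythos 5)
Your proof is correct and follows essentially the same route as the paper's: the paper likewise obtains the trivialization from a smooth section built out of the normal form in the first case of Proposition-Definition \ref{prd:s2cubed_fibers} and the map $(g,i)\mapsto g\cdot s(i)$, and you have merely made that section explicit and verified it (which the paper leaves implicit). One pedantic remark: with $\iota_3$ as defined, $\langle p_0,p_1\rangle$ is the \emph{third} coordinate, so the containment is literally $D_1\subseteq[-1,1]^2\times\{-1,1\}\times\{0\}$; this coordinate-ordering slip is already present in the paper's statement, and your computation correctly identifies $\langle p_0,p_1\rangle$ and the determinant as the constrained entries, so nothing of substance is affected.
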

\begin{proof}
  Using the first case from the proof of proposition \ref{prd:s2cubed_fibers},
  there is a smooth section
  \[s \colon I_3\sm D_1 \to (\BS^2)^3 \sm \gD_{(\BS^2)^3}.\]
  Let us define a map from $SO(3) \times \left(I_3\sm D_1\right) \to (\BS^2)^3$ by
  \[(g, i) \mapsto g \cdot s(i).\]
  Indeed, for every $(p_0, p_1, p_2) \in (\BS^2)^3 \sm \gD_{(\BS^2)^3}$,
  there is a unique rotation $g \in SO(3)$ such that
  $g (s \iota_3 (p_0, p_2, p_2)) = (p_0, p_1, p_2)$.
\end{proof}
Clearly,
\[D_1 = \{(\alpha, \beta, \alpha \cdot \beta, 0) \mid \alpha \in \pm 1, \beta \in [-1,1]\}.\]
Thus, from a practical point of view, it is easy to remove this component from $I_3$.
\begin{rmr}\label{r:d1-is-optimal}
  We have a couple of remarks about the above fibration:

  First, $I_3$ is a three dimensional subset of $[-1, 1]^4$,
  since the determinant $\det(p_0,p_1,p_2)$ is determined
  up to sign from the inner products.
  Similarly, $D_1$ is a one dimensional subspace of $I_3$.

  Second, the choice of $D_1$ is optimal --- there is
  no zero dimensional subset of $I_3$ such that once removed from
  the base of the $SO(3)$-bundle $(\BS^2)^3$, the bundle trivializes.

  This follows from a general classification result of $SO(n)$-bundles,
  by Dold and Whitney, \cite{DW}: Since $(\BS^2)^3\sm\{(p_1,p_2,p_3)|\dim\spn\{p_i\}=1\}$ is
  an $SO(3)$ bundle with three
  dimensional basis, is it fully determined by the zeroth and first $\BF_2$
  cohomology groups of the total space of this bundle -- which are
  computable by a Mayor-Vietoris long exact sequence --- and the zeroth $\BF_2$ cohomology of the
  base. We do not expand on this here as it is
  immaterial for our purposes.
\end{rmr}

\begin{ntt}\label{n:phi_tau_iota}
  Denote by $\phi$ the diffeomorphism:
  \[
  \begin{aligned}
    \phi:\BR^+\times \BS^2 \times (\BS^2)^{2}&\mapsto (\BR^3 \setminus \{0\})\times(\BS^2)^{2}\\
    (a,b,c,d)&\mapsto (ab, c,d).
  \end{aligned}
  \]
  Denote by $\gD_{\BR^3\times(\BS^2)^{2}}\subset \BR^3\times(\BS^2)^{2}$ the union
  of $\{0\}\times (\BS^2)^2$, and the image of
  $\BR^+\times \gD_{(\BS^2)^3}\subset \BR^+\times (\BS^2)^{3}$
  under the map $\phi$.

  Following our convention, we will denote:
  \[
  \begin{aligned}
    \iota_4 \colon (\BR^3 \setminus \{0\}) \times (\BS^2)^{2} & \mapsto  \BR^+ \times I_3 \\
    (a,b,c)&\mapsto (\|a\|, \iota_3(\frac{a}{\|a\|}, b, c)).
  \end{aligned}
  \]
  and
  \[
  \begin{aligned}
    \tau:\BR^3 \times (\BS^2)^{2} \sm \gD_{\BR^3\times(\BS^2)^{2}} & \mapsto SO(3)\\
    (a,b,c)&\mapsto \pi_0(\phi(\frac{a}{\|a\|}, b, c))
  \end{aligned}
  \]

  Finally, we let
  \[\psi \colon SO(3) \times \BR^+ \times (I_3 \sm D_1) \to \BR^3 \times \BS^2 \times \BS^2\]
  be the inverse of $(\tau, \iota_4)$.
\end{ntt}
The lemma below and its corollary are standard facts about the derivative of
the multiplication in a Lie group.
\begin{lma}\label{l:der_product}
  Let $G$ be a Lie group.
  Recall that $g_*:T_e G\to T_g G$ is an isomorphism.
  Then the derivative of the maps
  \[\begin{aligned}
  G\times G&\to G&\qquad G\times G&\to G\\
  (g_1,g_2)&\mapsto g_1g_2&\qquad (g_1,g_2)&\mapsto g_1^{-1}g_2
  \end{aligned}\]
  at the point $({L_{g_1}}_*(h), 0)=({g_1}_*h, 0)$ are given by
  \[
    {R_{g_2}}_*({g_1}_*h),\quad\text{and}\quad
    -{R_{g_1}}_*({R_{g_2}}_*({g_1^{-1}}_*h)=-{R_{g_2}}_*(\Ad_{g_1}h).
  \]
  respectively, where $L, R$ are the left and
  right actions respectively
\end{lma}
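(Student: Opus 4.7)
The plan is to reduce each derivative to a composition of left/right translations and the differential of the inversion map $\iota(g) = g^{-1}$, and then invoke the chain rule. Since the tangent vector in the second factor is $0$, only the partial derivative in the first factor contributes, which simplifies the bookkeeping considerably.

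For the multiplication map $\mu(g_1,g_2) = g_1 g_2$, I would first note that with $g_2$ held fixed, $\mu(\cdot, g_2) = R_{g_2}$. Therefore the partial derivative in the first slot is just ${R_{g_2}}_*$, and evaluating on $v_1 = {g_1}_* h = {L_{g_1}}_* h$ yields ${R_{g_2}}_*({g_1}_* h)$, which is the first claimed formula. No further work is required.

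For the map $\nu(g_1, g_2) = g_1^{-1} g_2$, I would factor $\nu(\cdot, g_2) = R_{g_2} \circ \iota$ and compute $d\iota|_{g_1}$ directly from a curve. Taking the curve $t \mapsto g_1 \exp(t h)$, whose velocity at $t=0$ is ${g_1}_* h$, one has $\iota(g_1 \exp(th)) = \exp(-th)\, g_1^{-1} = R_{g_1^{-1}}(\exp(-th))$, whose velocity at $t=0$ is $-{R_{g_1^{-1}}}_* h$. Applying ${R_{g_2}}_*$ and using $R_{g_2} \circ R_{g_1^{-1}} = R_{g_1^{-1} g_2}$ gives the derivative $-{R_{g_2}}_*({R_{g_1^{-1}}}_* h)$, which lies in $T_{g_1^{-1} g_2} G$ as it should. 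The identification with the $\Ad$-form of the expression then follows from the standard identity $\Ad_{g} = {L_g}_* \circ {R_{g^{-1}}}_*$ on $T_e G$, which lets one freely trade right-translation of $h$ by $g_1^{-1}$ for an application of $\Ad_{g_1}$ combined with the appropriate translation.

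There is no real obstacle here; the lemma is essentially a chain-rule calculation that any textbook treatment of Lie groups carries out. The only thing to be careful about is the direction conventions (left versus right translations, and where the adjoint action enters), since these must be consistent with the notation fixed earlier so that the formulas can be applied cleanly in the rotational-convolution argument that follows.
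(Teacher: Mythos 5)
Your proof is correct and follows essentially the same route as the paper's: both reduce the second map to the chain rule through inversion, the only difference being that you derive $d\iota|_{g_1}(L_{g_1*}h)=-{R_{g_1^{-1}}}_*h$ from a curve while the paper quotes the standard formula $-{R_{g^{-1}}}_*({L_{g^{-1}}}_*(g_*h))$. Note that your honestly computed answer $-{R_{g_2}}_*({R_{g_1^{-1}}}_*h)\in T_{g_1^{-1}g_2}G$ differs from the lemma's displayed $-{R_{g_2}}_*(\Ad_{g_1}h)\in T_{g_2}G$ by the left translation ${L_{g_1}}_*$ (since $\Ad_{g_1}={L_{g_1}}_*\circ{R_{g_1^{-1}}}_*$), so the "appropriate translation" you allude to is a base-point looseness already present in the statement and in the paper's own proof, not a gap in your argument.
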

\begin{proof}
  Recall that the derivatives
  of the maps
  \[\begin{aligned}
  G\times G&\to G&\qquad G&\to G\\
  (g_1,g_2)&\mapsto g_1g_2&\qquad g&\mapsto g^{-1}
  \end{aligned}\]
  are
  \[\begin{aligned}
      \ &{R_{g_2}}_*({g_1}_* h_1) + L{_{g_1}}_*({g_2}_* h_2)=
      {R_{g_2}}_*({g_1}_* h_1) + {g_1}_*{g_2}_* h_2,\quad\text{ and}\\
        \ & -(R_{g^{-1}})_*((L_{g^{-1}})_*{g_*h})=-{R_{g^{-1}}}_*(h)
  \end{aligned}\]
  respectively, where $((g_1)_*h_1, (g_2)_*h_2)\in T_{(g_1, h_2)}G\times G, g_*h\in T_g G$.
  The first claim follows immediately, whereas the second claim follows from
  the chain rule.
\end{proof}
\begin{cor}\label{c:der_product}
  With the notations of the lemma \ref{l:der_product}
  the maps
  \[
  {R_{g_2}}_*({g_1}_*-):T_e G\to T_{g_1g_2}G\qquad
  -{R_{g_2}}_*\Ad_{g_1}(-):T_e G\to T_{g_2}G
  \]
  are isomorphisms for any $g_1, g_2\in G$,
\end{cor}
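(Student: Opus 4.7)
The plan is to observe that both maps in question are compositions of isomorphisms of vector spaces, so the corollary reduces to identifying each factor as an isomorphism and invoking the fact that a composition of isomorphisms is an isomorphism.

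For the first map ${R_{g_2}}_*\circ ({g_1}_*-)\colon T_e G\to T_{g_1g_2}G$: I would factor it as
\[
T_e G \xrightarrow{(L_{g_1})_*} T_{g_1}G \xrightarrow{(R_{g_2})_*} T_{g_1g_2}G,
\]
where the first arrow is the derivative of left translation by $g_1$ at $e$ and the second is the derivative of right translation by $g_2$ at $g_1$. Since left and right translations on a Lie group are diffeomorphisms (with inverses given by translations by $g_1^{-1}$ and $g_2^{-1}$ respectively), their derivatives at any point are linear isomorphisms between the corresponding tangent spaces, and the composition is therefore an isomorphism.

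For the second map $-{R_{g_2}}_*\circ\Ad_{g_1}(-)\colon T_e G\to T_{g_2}G$: I would factor it as
\[
T_e G \xrightarrow{\Ad_{g_1}} T_e G \xrightarrow{(R_{g_2})_*} T_{g_2}G,
\]
followed by multiplication by $-1$. The adjoint representation $\Ad_{g_1}\colon \mathfrak{g}\to\mathfrak{g}$ is a Lie algebra automorphism (it is the derivative at $e$ of the conjugation diffeomorphism $h\mapsto g_1 h g_1^{-1}$, and its inverse is $\Ad_{g_1^{-1}}$), so it is a linear isomorphism of $T_e G$. The derivative of right translation is again an isomorphism as above, and scalar multiplication by $-1$ is trivially an isomorphism on a vector space. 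Composing yields the claim.

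There is no substantive obstacle here: the corollary is purely formal bookkeeping on top of Lemma \ref{l:der_product}. The only thing to be mildly careful about is to confirm that the expression $-{R_{g_2}}_*\Ad_{g_1}(-)$ takes values in $T_{g_2}G$ (not some other tangent space); this is immediate since $\Ad_{g_1}$ lands in $T_e G$ and then $(R_{g_2})_*$ sends $T_e G$ to $T_{g_2}G$. With both factorizations in hand the corollary follows without further computation.
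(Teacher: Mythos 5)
Your proof is correct and follows essentially the same route as the paper: the paper observes that the two maps are derivatives of the diffeomorphisms obtained by fixing one argument of the multiplication (resp.\ inverse-then-multiply) map, while you make the same point by explicitly factoring each map into derivatives of left/right translations and $\Ad_{g_1}$, all of which are isomorphisms. Nothing is missing.
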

\begin{proof}
  Fixing either $g_1$ or $g_2$ we get an isomorphism between the horizontal
  or vertical fiber of $G\times G$ and $G$; of which the above maps are
  derivatives.
\end{proof}
\begin{prp}\label{p:short-exact-sequence:ad-iota}
  Let $p$ be a point in
  $\BR^3 \times \BS^2 \times \BS^2 \sm \gD_{\BR^3 \times \BS^2 \times \BS^2}$.
  The sequence
\[
\begin{tikzcd}
0\to T_e SO(3) \arrow{r}{\psi_*(-{R_{\tau p}}_*\Ad_g(-), 0)} &[4em]
T_p (\BR^3 \times \BS^2 \times \BS^2) \arrow{r}{(\iota_4)_*} &
T_{\iota_4p}(\BR^+ \times I_3)\to 0
\end{tikzcd}
\]
is exact for any $g\in SO(3)$.
\end{prp}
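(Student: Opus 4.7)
The plan is to reduce everything to the fact that $\psi$ is a diffeomorphism whose ``second component'' is exactly $\iota_4$. By Proposition-Definition \ref{prd:d_s2cubed} together with the polar-coordinate diffeomorphism $\phi$ introduced in Notation \ref{n:phi_tau_iota}, the map $\psi \colon SO(3)\times \BR^+\times (I_3\sm D_1)\to (\BR^3\times(\BS^2)^2)\sm\gD_{\BR^3\times(\BS^2)^2}$ is a diffeomorphism with inverse $(\tau,\iota_4)$. In particular, $\psi_*$ at $(\tau p,\iota_4 p)$ is a linear isomorphism of tangent spaces, and under this identification $\iota_4$ becomes the canonical projection $SO(3)\times(\BR^+\times I_3)\to \BR^+\times I_3$.

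First I would deal with the right half of the sequence. Since $\iota_4\circ\psi$ is a projection, it is a submersion; pulling back along the diffeomorphism $\psi$, the differential $(\iota_4)_*\colon T_p(\BR^3\times(\BS^2)^2)\to T_{\iota_4 p}(\BR^+\times I_3)$ is surjective. Its kernel is precisely the image under $\psi_*$ of the vertical subspace, namely
\[
\ker(\iota_4)_* = \psi_*\bigl(T_{\tau p}SO(3)\oplus 0\bigr).
\]

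Next I would analyze the left map. By Corollary \ref{c:der_product}, for every $g\in SO(3)$ the linear map $h\mapsto -R_{\tau p\,*}\Ad_g(h)$ is an isomorphism $T_e SO(3)\to T_{\tau p}SO(3)$. Pairing with the zero vector in $T_{\iota_4 p}(\BR^+\times I_3)$ and composing with the isomorphism $\psi_*$, the resulting map
\[
T_e SO(3)\;\xrightarrow{\psi_*(-R_{\tau p\,*}\Ad_g(-),\,0)}\;T_p(\BR^3\times(\BS^2)^2)
\]
is injective, with image equal to $\psi_*(T_{\tau p}SO(3)\oplus 0)$. Combined with the preceding step this image coincides with $\ker(\iota_4)_*$, establishing exactness at the middle and completing the short exact sequence; the independence from the auxiliary element $g$ is automatic, since any isomorphism of $T_e SO(3)$ onto $T_{\tau p}SO(3)$ yields the same image when composed with the injection $\psi_*|_{T_{\tau p}SO(3)\oplus 0}$.

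There is no real obstacle here: the diffeomorphism property of $\psi$ from Proposition-Definition \ref{prd:d_s2cubed} and the isomorphism statement in Corollary \ref{c:der_product} are the only inputs, and the assertion is essentially the decomposition of the tangent space of a principal $SO(3)$-bundle into vertical and horizontal parts. The only item deserving a sentence of care is that the point $p$ lies outside $\gD_{\BR^3\times(\BS^2)^2}$, so that $(\tau,\iota_4)$ and hence $\psi$ are defined and smooth at $p$; this is precisely the hypothesis of the proposition.
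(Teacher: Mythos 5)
Your argument is correct and is essentially the paper's proof, which simply says ``this follows from Corollary \ref{c:der_product}'': you use the same two inputs (the triviality of the bundle via $\psi$ from Proposition-Definition \ref{prd:d_s2cubed}, and the isomorphism statement of Corollary \ref{c:der_product}) and merely spell out the vertical/horizontal decomposition that the paper leaves implicit.
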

\begin{proof}
This follows from corollary \ref{c:der_product}.
\end{proof}
\begin{rmr}[Where we do and where we do not use the bundle triviality in the rest of the paper]
  The proposition just proven is local in nature, and hence -- as one may
  trivialize the bundle locally -- holds with minor modifications for any
  $G$-bundle, and not only trivial ones. Hence, the proof of lemma
  \ref{lemma:starstar1_equiv_iota_rank} does not depend on the triviality of
  our bundle.

  Moreover, the proof lemma \ref{lemma:rank piX/G rotations} only uses
  ``division along one fiber'', and not the existence of a section to the bundle.

  The place where we do in fact use the bundle triviality is where we, in
  effect, perform a convolution between two copies of the bundle: in
  proposition \ref{p:m4}, and in our application of proposition
  \ref{P:l2fiber}. As can be seen in both proofs, one may convolve
  a trivial bundle with a non trivial one, but not two non trivial ones, and
  in our case we convolve the bundle with itself.
\end{rmr}
%
\section{Geometric aspects II: rotation invariance}\label{S:rotation_inv}
%
There are two major differences between our treatment of the rotations
and our previous treatment in the translations, from Section \ref{s:translations}. While in the case of the action
of $\BR^3$, the normals are not moved and can serve as an anchor
for reconstructing the surface, in the case of the rotations
the invariant parts are defined in a more complicated way.
Moreover, in the case of the translations, we could decompose
the group action on the relevant objects into a part in which
the group acts freely, and a part on which it acts trivially.
This decomposition imposed a structure of trivial group bundle
on the whole space.

In the case of the rotations, the desirable freeness
of the group action and, more importantly, the triviality of the corresponding
bundle are no longer true at every point. Instead, we
have to remove a subspace of co-dimension 2 in order
to trivialize the bundle, as we did in Section \ref{S:trivializing}.
This operation is, from the
perspective of Sections \ref{S:rep} and \ref{S:functional_analysis},
immaterial, as it happens on a measure zero in the surface (specifically,
see proposition \ref{P:l2fiber}, corollary \ref{C:l2fiber} and Remark
\ref{R:l2fiber}).

As in the case of the translations, we need to isolate a property of the
manifold $M$, measuring its self similarity under the action of the group.

For the ease of notation, denote
\[\RSS = \BR^3 \times \BS^2 \times \BS^2\]

\begin{ntt}
  Let $G = SO(3)$. Denote the action of $g\in G$ on $\RSS$
  by $\eta_{g,\RSS}$.
   We define two 7-folds and one 4-fold in $G\times \RSS$:
  \[
  \begin{aligned}
    Y_7=Y_7(M):=& G\times M_{2\times 2},\\
    Z_7=Z_7(M):=& \{(g, g\ol{m}) \mid g \in G,\, \ol{m} \in M_{2\times2}\},\\
    X_4=X_4(M):=& Y_7\cap Z_7.
  \end{aligned}
  \]
  Note that as $SO(3)$ is 3-dimensional, if $M_{2\times2}$ is 4-dimensional,
  then both $Y_7$ and $Z_7$ are 7-dimensional, whereas the ambient space
  $G\times \RSS$ is 10-dimensional.
  As in the translatory case, $TY_7 \cong TG \oplus TM_{2\times2}$ globally,
  while the same decomposition holds only locally for $Z_7$.
  Denote the inclusion $X_4\to G\times \RSS$ by
  $i_{X_4/G\times \RSS}$.
  Denote the composition of the inclusion map and the projection maps from $X_4$
  to the first and second components in $G\times \RSS$ by
  $\pi_{X_4/G}, \pi_{X_2/\RSS}$,
  respectively. Since $\range(\pi_{X_4/\RSS})\subset M_{2\times 2}$
  we may also define
  \[
  \pi_{X_4/M_{2\times 2}}:=i^{-1}_{M_{2\times 2}/\RSS}\circ
  \pi_{X_4/\RSS}.
  \]
  Finally we denote
  \[
  \rho_{X_4/M_{2\times 2}}:=\eta_{\pi_{X_4/G}(-)^{-1},\RSS}\circ
  \pi_{X_4/M_{2\times 2}}
  =i^{-1}_{M_{2\times 2}/\RSS}\circ
  \eta_{\pi_{X_4/G}(-)^{-1},\RSS}\circ\pi_{X_4/\RSS}
\]
  where the composition in the right term of the last equality is well defined by
  the definition of $X_4$.
\end{ntt}
As in the case of $X_2$, the notation of $X_4$ suggests that it would be
a four dimensional manifold. This is certainly not true in general.
\begin{exm}[The unit sphere running example becomes a non-example]
Let $M = \BS^2$, which is invariant under $SO(3)$,
then, $X_4 = Y_7 = Z_7$ and it is 7-dimensional.
\end{exm}
As before, the set $\{id\} \times M_{2 \times 2}$
is contained in $X_4$, forcing its dimension to be
at least the dimension of $M_{2 \times 2}$ which is,
at most cases, $4$. Nevertheless, $X_4$ does not have to be a manifold, in the
sense that it might be composed of several manifolds
of various dimensions, with non-trivial intersections.

The following definition, which is a strengthening of $(\star)$, isolates
a sufficient condition for $X_4$ to behave nicely in order for our
reconstruction method to apply.
\begin{dfn}[Property $\star\star$]\label{d:property_star_star}
  Let $M$ be a twice differentiable surface satisfying $\star$. We say that $M$
  satisfies $\star\star$ if there is a subset $X'_3 \subseteq X_4$ of
  $\mathcal{I}$-dimension at most 3 such that at points
  \[
  p = (g, (m_2 - m_1, N_{m_1}, N_{m_2}))
  = (g, (g(m_2' - m_1'), gN_{m_1'}, gN_{m_2'})))\in X_4\sm X'_3,
  \]
the following properties hold:
\begin{enumerate}
\item\label{i:iso_so3}
  The map $(\iota_4 \restriction M_{2\times 2})_*|_{\bar{m}}$
  is of rank $4$ for $\bar{m} = (m_2 - m_1, N_{m_1}, N_{m_2})$.
\item\label{i:shape_so3}
  Let $S_r$ be the shape operator of $M$ at point $r$. Then
  \[\rank (g_* S_{m_1'} - S_{m_1}) =
  \rank (g_* S_{m_2'} - S_{m_2}) = 2.\footnote{for a linear map $S$ on $V$ and $g \colon V \to U$, $g_*(S)$ is a linear map on $U$ defined by $g_*(S)(x) = g S g^{-1}(x)$.}
\]\end{enumerate}
\end{dfn}
\begin{lma}\label{lemma:rank piX/G rotations}
  For points $p \in X_4 \sm X'_3$, which do not project to $D_1$ under $\iota_4$ and do not project to $\gD_{\BS^2 \times \BS^2}$ under the corresponding projection, we have
  $\rank d \pi_{X_4/G} |_p = 3$.
\end{lma}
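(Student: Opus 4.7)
The plan is to mimic the proof of Lemma \ref{lemma:rank piX/G translations}, replacing the linear projection to $G=\BR^3$ by the extraction of a rotation via $\tau$, and the ``normals match'' constraint by the $\iota_4$-match constraint. Write $p=(g,\bar m)=(g,g\bar m')$ with $\bar m=(m_2-m_1,N_{m_1},N_{m_2})$ and $\bar m'=(m_2'-m_1',N_{m_1'},N_{m_2'})$. First I would identify $T_pY_7=T_gG\oplus T_{\bar m}M_{2\times 2}$ and $T_pZ_7=\{(h,\beta(h)+g_*t'):h\in T_gG,\,t'\in T_{\bar m'}M_{2\times 2}\}$, where $\beta(h):=d_g(g''\mapsto g''\bar m')(h)$ is the infinitesimal orbit tangent at $\bar m$. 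Since $\pi_{X_4/G}(h,t)=h$, showing rank $3$ reduces to solving $t-g_*t'=\beta(h)$ for every $h\in T_gG$ with $t\in T_{\bar m}M_{2\times 2}$ and $t'\in T_{\bar m'}M_{2\times 2}$.

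Next I would apply $(\iota_4)_*$ to that equation (valid because the avoidance of $D_1$ puts both $\bar m$ and $\bar m'$ in the domain of Proposition \ref{p:short-exact-sequence:ad-iota}). Orbit tangents lie in $\ker(\iota_4)_*$, so $\beta(h)$ is killed; by $G$-invariance of $\iota_4$, $(\iota_4)_*(g_*t')=(\iota_4)_*(t')$. Combined with condition \ref{i:iso_so3} of $(\star\star)$, which makes $(\iota_4)_*$ an isomorphism from each of $T_{\bar m}M_{2\times 2}$ and $T_{\bar m'}M_{2\times 2}$ onto the common $4$-dimensional target, this forces $t=L(t')$ for a canonical isomorphism $L$. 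The surviving equation $L(t')-g_*t'=\beta(h)$ lives in the $3$-dim space $\ker(\iota_4)_*|_{\bar m}$, onto which $\beta$ maps $T_gG$ isomorphically (again by Proposition \ref{p:short-exact-sequence:ad-iota}). So the rank-$3$ claim is equivalent to the surjectivity of $\Phi:T_{\bar m'}M_{2\times 2}\to\ker(\iota_4)_*|_{\bar m}$, $t'\mapsto L(t')-g_*t'$, and by a $4\to 3$ dimension count this is equivalent to $\dim\ker\Phi=1$; via $t'\mapsto g_*t'$, $\ker\Phi$ is identified with $T_{\bar m}M_{2\times 2}\cap g_*T_{\bar m'}M_{2\times 2}$.

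The hard part, as I expect it, will be computing this intersection and showing it is exactly $1$-dimensional (the lower bound $\geq 1$ is automatic from $4+4-7$). I would parametrise $T_{\bar m}M_{2\times 2}$ by $(u_1,u_2)\mapsto(u_2-u_1,-S_{m_1}(u_1),-S_{m_2}(u_2))$, and similarly for $\bar m'$. Invertibility of $S_{m_i}$ from condition \ref{i:iso_star} (which $(\star\star)$ inherits from $(\star)$) lets me solve $u_i=S_{m_i}^{-1}(gS_{m_i'}(u_i'))$ from the two $\BS^2$-components; the surviving $\BR^3$-equation then reads $\Psi_1(u_1')=\Psi_2(u_2')$ with
\[
\Psi_i(u')\;:=\;gu'-S_{m_i}^{-1}\bigl(gS_{m_i'}(u')\bigr)\;=\;S_{m_i}^{-1}\bigl(S_{m_i}g-gS_{m_i'}\bigr)(u')\;\in\;T_{m_i}M,
\]
using $gT_{m_i'}M=T_{m_i}M$ (a consequence of $gN_{m_i'}=N_{m_i}$). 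Condition \ref{i:shape_so3} says precisely that $g_*(S_{m_i'})-S_{m_i}$ has full rank on $T_{m_i}M$, which translates to $S_{m_i}g-gS_{m_i'}:T_{m_i'}M\to T_{m_i}M$ being an isomorphism, so each $\Psi_i$ is an isomorphism of $2$-dim planes. Since $p$ avoids $\gD_{\BS^2\times\BS^2}$, one has $N_{m_1}\ne\pm N_{m_2}$, and so $L:=T_{m_1}M\cap T_{m_2}M=\spn(N_{m_1},N_{m_2})^\perp$ is exactly $1$-dim. Both sides of $\Psi_1(u_1')=\Psi_2(u_2')$ must lie in $L$, and solutions are parametrised by $v\in L$ via $u_i'=\Psi_i^{-1}(v)$, giving the desired $1$-dim family. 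The main bookkeeping worry will be tracking how $g_*$ acts on each of the three factors of $\RSS$ and correctly identifying the various tangent planes as subspaces of $\BR^3$.
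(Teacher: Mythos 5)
Your overall reduction is sound and runs structurally parallel to the paper's argument: after applying $(\iota_4)_*$, using Proposition \ref{p:short-exact-sequence:ad-iota} and condition \ref{i:iso_so3}, everything comes down to showing that $T_{\bar{m}}M_{2\times 2}\cap g_*T_{\bar{m}'}M_{2\times 2}$ is at most one-dimensional; the paper computes exactly this intersection, phrased as a direct bound on $\ker d\pi_{X_4/G}$ rather than as a surjectivity count for your map $\Phi$.

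The gap is in the final computation. You invert $S_{m_1}$ and $S_{m_2}$, citing condition \ref{i:iso_star} of $(\star)$, but that condition only constrains shape operators at points of $X_2\sm X_1'$ --- equivalently (via the diagonal $\{0\}\times M\subseteq X_2$) at points of $M$ outside a set of $\mathcal{I}$-dimension $\leq 1$. Nothing in the hypotheses prevents $m_1$ or $m_2$ (the two points of $M$ underlying $\bar{m}$) from lying in that exceptional set: $X_3'$ is an essentially arbitrary set of $\mathcal{I}$-dimension $\leq 3$ inside the (generically) $4$-dimensional $X_4$, while the locus of $p$ whose $m_1$ is, say, parabolic can be $3$-dimensional without being contained in $X_3'$; and neither condition of $(\star\star)$ forces $\rank S_{m_i}=2$ (condition \ref{i:shape_so3} only controls the differences $g_*S_{m_i'}-S_{m_i}$, and condition \ref{i:iso_so3} can hold with a degenerate $S_{m_1}$ because the $\BR^3$-direction $u_2-u_1$ still contributes to $d\iota_4$). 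So your $\Psi_i=S_{m_i}^{-1}\bigl(S_{m_i}g-gS_{m_i'}\bigr)$ need not be defined. The repair is exactly the paper's move: for a kernel element set $t:=u_1-gu_1'=u_2-gu_2'$ (the two expressions agree by the $\BR^3$-equation); then $t\in T_{m_1}M\cap T_{m_2}M$, which is $1$-dimensional since $N_{m_1}\neq\pm N_{m_2}$, and the $\BS^2$-equations rearrange to $(g_*S_{m_i'}-S_{m_i})(gu_i')=S_{m_i}(t)$. Here only the difference operators are inverted --- which condition \ref{i:shape_so3} licenses --- so $u_i'$, and hence $u_i$, is determined by $t$, giving $\dim\ker\leq 1$ without ever inverting $S_{m_i}$ itself.
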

\begin{proof}
  We use the notations of definition \ref{d:property_star_star}, and
  similarly to the translatory case (see lemma \ref{lemma:rank piX/G translations}), consider the map
  \[
  (\pi_{X_4/G},\pi_{X_4/M_{2\times 2}},\rho_{X_4/M_{2\times 2}}):X_4\to G\times M_{2\times 2}
  \times M_{2\times 2}.
  \]
  Let $p \in X_4 \sm X_3'$ which does not project to $D_1$, and let
  \[
  (g,\quad m_1 - m_2, N_{m_1}, N_{m_2},\quad
  m_1' - m_2', N_{m_1'}, N_{m_2'})  \in G \times M_{2\times2} \times M_{2\times2},
  \]
  be the image of $p$, where $m_1, m_2, m_1', m_2' \in M$. Since our point $p$ is
  not in the preimage of $D_1$, we conclude that there is an open neighborhood of
  $(m_1, m_2, m_1', m_2') \in M^4$ and a natural smooth map from it to $G \times M_{2 \times 2} \times M_{2 \times 2}$, sending $(r_1, r_2, r'_1, r'_2)$ to
  \[
    (\tau(r_1 - r_2, N_{r_1}, N_{r_2}) \tau(r_1' - r_2', N_{r_1'}, N_{r_2'})^{-1} , 
    \quad r_1 - r_2, N_{r_1}, N_{r_2},\quad
    r_1' - r_2', N_{r_1'}, N_{r_2'})
  \]
  This point is in the image of $X_4$ if and only if there is $h \in G$ such that:
  \[
    r_1 - r_2 = h(r_1' - r_2'),\quad 
    N_{r_1}= h(N_{r_1'}),\quad 
    N_{r_2}= h(N_{r_2'})
    \]

  By the freeness of the action of the group, $h$ is unique:
  \[h = \tau(r_1 - r_2, N_{r_1}, N_{r_2}) \tau(r_1' - r_2', N_{r_1'}, N_{r_2'})^{-1}.\]

  Let us analyze which tangent vectors in $M^4$
  correspond to the kernel of
  $(\pi_{X_4 / G})_*|_p$. As we will see in the next lemma, \ref{lemma:starstar1_equiv_iota_rank}, those
  points correspond to points in $X_3'$.
  Let $x_1,x_2,x_1',x_2'$ be tangent vectors
  at the corresponding
  points $m_1,m_2,m_1',m_2'$ and let $g$ be the value of the rotation.
  Take the derivatives of the above equations,
  assuming that $x_1, x_2, x_1', x_2'$ correspond
  to a tangent vector in $X_4$
  with trivial projection to $G$ (which means that the
  derivative of $h$ is zero at that direction).
  We obtain the following equations:
  \[
    x_1 - x_2 = g_*(x_1' - x_2'),\quad
    S_{m_1}(x_1) = g_*(S_{m_1'}(x_1')),\quad
    S_{m_2}(x_2) = g_*(S_{m_2'}(x_2'))
    \]
  In order for those equations to make sense,
  we canonically embed all tangent vectors to $\BR^3$.

  Let
  \[t = x_1 - g_* x_1' = x_2 - g_* x_2' \in T_{m_1} M \cap T_{m_2} M.\]
  Since this vector is perpendicular to both
  $N_{m_1}$ and $N_{m_2}$, which are
  linearly independent, it lies in a one dimensional
  subspace of $\BR^3$.

  By rearranging the equations we obtain:
  \[
  (g_* S_{m_1'} - S_{m_1})(g_* x_1') = S_{m_1} (t),\quad \\
  (g_* S_{m_2'} - S_{m_2})(g_* x_2') = S_{m_2} (t)
  \]
  Using requirement \ref{i:shape_so3} in definition
  \ref{d:property_star_star},
  we conclude that $x_1, x_2, x_1', x_2'$ are
  uniquely determined by $t$, which
  means that the linear space of such tangent
  vectors is at most one dimensional.

  Thus, $\dim \ker \pi_{X_4/G} \leq 1$, or equivalently
  $\rank \pi_{X_4/G} \geq 3$, as claimed.
\end{proof}

The proof of the above lemma
can be adjusted to the case $N_{m_1}= \pm N_{m_2}$,
to get $\rank \pi_{X_4/G} \geq 2$.

Recall that $\iota_4$ is a differentiable map from
$(\RSS) \sm \Delta_{\RSS}$
to $\mathbb{R}^+ \times (I_3 \sm D_1)$.

\begin{lma}\label{lemma:starstar1_equiv_iota_rank}
  If $x$ is a point in $X_4$, corresponding to
  $p, p' \in M_{2\times 2}$, so that
  $p'\in\iota_4^{-1}(\mathbb{R}^+ \times (I_3 \sm D_1))$. Let us assume further
  that the dimension of $M_{2 \times 2}$ around $p, p'$ is $4$, and that
\[\rank (\iota_4 \restriction M_{2 \times 2})_* |_p
= \rank (\iota_4 \restriction M_{2 \times 2})_* |_{p'} = 4.\]
Then $\rank (\pi_{X_4/M_{2\times 2}})_*|_x = \rank (\rho_{X_4/M_{2\times 2}})_*|_x = 4$.
\end{lma}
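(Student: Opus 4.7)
\emph{Proof proposal.} I will show that $(\pi_{X_4/M_{2\times 2}})_*|_x \colon T_x X_4 \to T_p M_{2\times 2}$ is surjective; together with the hypothesis $\dim_p M_{2\times 2} = 4$, this gives $\rank (\pi_{X_4/M_{2\times 2}})_*|_x = 4$. The corresponding statement for $\rho_{X_4/M_{2\times 2}}$ then follows from the involution of $X_4$ sending $(g,\bar m) \mapsto (g^{-1}, \eta_{g^{-1},\RSS}(\bar m))$, which interchanges the two projections and preserves the hypotheses (since $\iota_4(p) = \iota_4(p')$ by the $SO(3)$-invariance of $\iota_4$).

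Write $x = (g,p)$ and $p' := \eta_{g^{-1},\RSS}(p)\in M_{2\times 2}$. Parametrising $Z_7$ by the map $(h,\bar m')\mapsto(h, h\bar m')$, a tangent vector to $Z_7$ at $x$ has second component of the form $(\eta_{g,\RSS})_*\!\bigl(\hat\xi_{p'} + w_{p'}\bigr)$, where $\xi\in T_e SO(3)$ controls the $G$-component and $w_{p'}\in T_{p'}M_{2\times 2}$, and $\hat\xi$ denotes the fundamental vector field generated by $\xi$ on $\RSS$. Thus $T_x X_4 = T_x Y_7 \cap T_x Z_7$ consists of vectors of this shape whose second component additionally lies in $T_p M_{2\times 2}$.

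Given an arbitrary $u\in T_p M_{2\times 2}$, I produce the required data $(\xi, w_{p'})$ so that $(\eta_{g^{-1},\RSS})_* u = \hat\xi_{p'} + w_{p'}$. By $SO(3)$-invariance of $\iota_4$, $(\iota_4)_*(\eta_{g^{-1},\RSS})_* u = (\iota_4)_* u$, viewed as an element of $T_{\iota_4 p'}(\mathbb{R}^+\times I_3)$. Since $(\iota_4\restriction M_{2\times 2})_*$ has rank $4$ at $p'$, it is an isomorphism onto this $4$-dimensional target, so there is a unique $w_{p'}\in T_{p'}M_{2\times 2}$ with $(\iota_4)_* w_{p'} = (\iota_4)_* u$. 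The residual vector $(\eta_{g^{-1},\RSS})_* u - w_{p'}$ then lies in $\ker (\iota_4)_*|_{p'}$; by Proposition \ref{p:short-exact-sequence:ad-iota}, which applies because $p'\notin\gD_{\RSS}$ (this is guaranteed by the hypothesis $p'\in\iota_4^{-1}(\mathbb{R}^+\times(I_3\sm D_1))$), this kernel is precisely the image of the infinitesimal $SO(3)$-action. Thus a unique $\xi\in T_e SO(3)$ exists with $\hat\xi_{p'} = (\eta_{g^{-1},\RSS})_* u - w_{p'}$, yielding a vector $v\in T_x X_4$ with $(\pi_{X_4/M_{2\times 2}})_*(v) = u$.

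The main conceptual point is that one cannot simply lift $u$ to $(0,u)\in T_g G\oplus T_p M_{2\times 2}$, because such a ``horizontal'' lift typically fails tangency to $Z_7$; the correct lift requires a precise infinitesimal rotation correction. The rank-$4$ hypothesis on $(\iota_4\restriction M_{2\times 2})_*|_{p'}$ is exactly the non-degeneracy needed for splitting $(\eta_{g^{-1},\RSS})_* u$ into an $M_{2\times 2}$-tangent piece plus an $SO(3)$-infinitesimal piece, and Proposition \ref{p:short-exact-sequence:ad-iota} is the structural fact that the kernel of $(\iota_4)_*$ is accounted for exactly by $T_e SO(3)$.
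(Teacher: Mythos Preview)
Your argument is correct. It differs from the paper's proof in a way worth noting: the paper argues \emph{injectivity} of $(\pi_{X_4/M_{2\times 2}})_*|_x$ by taking $v\in\ker(\pi_{X_4/M_{2\times 2}})_*$, computing $w=(\rho_{X_4/M_{2\times 2}})_*(v)$ via Lemma~\ref{l:der_product}, observing that $w$ has trivial $(\iota_4)_*$-image, and then invoking the rank-4 hypothesis at $p'$ to force $w=0$, hence $v=0$. To conclude $\rank=4$ from injectivity one implicitly uses the transversality lower bound $\dim T_xX_4\ge \dim T_xY_7+\dim T_xZ_7-10=4$. Your surjectivity argument bypasses this step: given $u\in T_pM_{2\times 2}$ you explicitly construct a lift in $T_xX_4$ by splitting $(\eta_{g^{-1},\RSS})_*u$ into an $M_{2\times 2}$-tangent piece (using the rank-4 hypothesis at $p'$ as an isomorphism onto $T_{\iota_4 p'}(\BR^+\times I_3)$) plus an orbit-direction piece (using the exact sequence of Proposition~\ref{p:short-exact-sequence:ad-iota}). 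Both routes deploy the same two ingredients; yours packages them as a direct construction rather than a kernel chase, and your involution $(g,\bar m)\mapsto(g^{-1},\eta_{g^{-1},\RSS}\bar m)$ cleanly reduces the $\rho$ case to the $\pi$ case, whereas the paper treats them in tandem.
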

This lemma is the rotational equivalent of lemma \ref{L:prop_star1_is_regualrity}
\begin{proof}
We consider the derivatives of the maps
\[\begin{aligned}
\pi_{X_4/M_{2\times 2}}&=i^{-1}_{M_{2\times 2}/\RSS}
\circ i_{X_4/\RSS},\\
\rho_{X_4/M_{2\times 2}}&=i^{-1}_{M_{2\times 2}/\RSS}\circ
\eta_{\pi_{X_4/G}(-)^{-1},\RSS}\circ i_{X_4/\RSS}
\end{aligned}\]
at $T_xX_4$.

Let $v$ be a tangent vector in $T_xX_4$. We set notations to correspond with those
of lemma \ref{l:der_product}: $h:=(\pi_{X_4/G})_*(v), g_1:=\pi_{X_4/G}(x),g_2:=\tau(p)$. Note that $g_1 \tau(p') = g_2$. In particular,
\[
p'= \rho_{X_4/M_{2\times 2}}(x) = \big(\eta_{\pi_{X_4/G}(x)^{-1},\RSS}\circ i_{X_4/\RSS}\big)(x)
=\psi\big(\eta_{g_1^{-1},\RSS} \tau(p), \iota_4(p)\big).
\]

The vector $v$ is in $\ker (\pi_{X_4/M_{2\times 2}})_*$, if and only if
$\tau_*(\pi_{X_4/M_{2\times 2}})_*(v)) = 0$ and $(\iota_4)_*(\pi_{X_4/M_{2\times 2}})_*(v) = 0$,
which by lemma \ref{l:der_product}, implies that
\[
w:=(\rho_{X_4/M_{2\times 2}})_*(v) = \big((\eta_{\pi_{X_4/G}(-)^{-1},\RSS}\circ i_{X_4/\RSS})_*
\restriction T_xX_4\big) (v)
=\psi_*\big((-R_{g_2})_*\Ad_{g_1}(h)\big).
\]
This vector is trivial if and only if $h={\pi_{X_4/G}(x)}_*(v)$ is trivial if and
only if $v$ is trivial. Moreover it has a trivial
$(\iota_4)_*$ image since $\eta$ acts through $\iota_4$ fibers.
By proposition \ref{p:short-exact-sequence:ad-iota} vectors of the form
of $w$ are exactly the vectors in the kernel on $(\iota_4)_*$.
\end{proof}
\begin{ntt}\label{n:m4}
  Let $M\subset\BR^3$ be a twice differentiable compact surface.
  Denote by $M_{2\times 4}'\subset SO(3)\times(\RSS \sm \gD_{\RSS})^2$
  the set
  \[
  \
  \{(g, \ol{m}_1, \ol{m}_2) |\ol{m}_1,\ol{m}_2\in M_{2\times2} \sm \gD_{\RSS},\
  \tau(\ol{m}_1) g = \tau(\ol{m}_2)\}.
  \]
  Denote by
  \[
  M_{2\times4}\subset SO(3)\times(\BR^+\times (I_3 \sm D_1))^2
  \]
  the image of $M_{2\times4}'$ under the map $(id_{SO(3)}, \iota_4, \iota_4)$.
  Finally, denote by
  \[
  \ti{M}_{2\times 4}\subset\RSS\times \BR^{+} \times (I_3\sm D_1)
  \]
  the image of $M_{2\times 4}$ under the map
  $(g, i_1, i_2) \mapsto (\psi(g, i_2), i_1)$.
\end{ntt}

Each $\ol{m}_1 \in M_{2 \times 2}$, contributes to $\ti{M}_{2\times 4}$
a copy of $M_{2 \times 2}$ rotated by $\tau(\ol{m}_1)^{-1}$.   Moreover,
using requirement \ref{i:iso_so3} of $(\star\star)$, and the arguments
from lemma \ref{L:gauss_map_finite}, we conclude that besides a
set of positive co-dimension in $\BR^+ \times I_3$, each value in the
image of $\iota_4$ corresponds to {\it finitely} many rotations.

Thus, similarly to proposition \ref{p:m2}, by concentrating on a
fiber, we get finitely many rotated copies of $M_{2 \times 2}$, and the
challenge is to show that those copies do not intersect too much.

\begin{prp}\label{p:m4}
  Suppose $M$ satisfies property $\star\star$.
  Then there is an open subset
  \[
  U\subseteq \BS^2\times \BR^+ \times (I_3 \sm D_1)
  \]
  which is dense and of full relative measure in the set of possible fibers,
  so that for a point $f \in U$, the intersection
  \[
  \ti{M}_{2\times 4} \cap \BR^3\times \BS^2 \times \{f\},
  \]
  is the union $(g_1\circ M \cup \dots \cup g_n\circ M) \times\{f\}$,
  for some $g_i \in SE(3)$, where the copies of
  $M$ intersect only at finitely many points,
  along the pullback of the diagonals.
  Moreover, $\{g_1, \dots, g_n\}$ are continuous in $f$,
  in the same way as in proposition \ref{p:m2}.
\end{prp}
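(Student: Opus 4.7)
The plan is to closely mirror the proof of Proposition \ref{p:m2}, replacing translations by rotations and the role of $M$ by $M_{2\times 2}$. First, the set $U$ of fibers $f$ satisfying the stated conclusion is open by continuity of all maps involved, so it suffices to show that its complement lies in a subset of the six-dimensional base $\BS^2\times\BR^+\times (I_3\sm D_1)$ of $\mathcal{I}$-dimension $\leq 5$; this yields both density and full relative measure.

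Using the trivialization of the $SO(3)$-bundle from Proposition \ref{prd:d_s2cubed}, I would parametrize $\ti{M}_{2\times 4}$ via the map sending $(\ol{m}_1,\ol{m}_2)\in M_{2\times 2}^2$ to $(\tau(\ol{m}_1)^{-1}\ol{m}_2,\iota_4(\ol{m}_1))$. By equivariance of $\tau$, two such pairs give the same image if and only if there is $g\in SO(3)$ with $\ol{m}_i'=g\ol{m}_i$ for $i=1,2$; equivalently both $(g,\ol{m}_1')$ and $(g,\ol{m}_2')$ lie in $X_4$. For a generic $f$, Lemma \ref{lemma:starstar1_equiv_iota_rank} guarantees that the set of $\ol{m}_1\in M_{2\times 2}$ with $\iota_4(\ol{m}_1)$ matching the $\BR^+\times (I_3\sm D_1)$ part of $f$ is finite (giving finitely many rotations $\tau(\ol{m}_1)^{-1}$), while fixing the $\BS^2$-coordinate of $f$ pins down a normal which, by an adaptation of Lemma \ref{L:gauss_map_finite}, lifts to finitely many points of $M$; letting the remaining point of $M$ inside $\ol{m}_2$ vary then traces out a copy of $M$ embedded via an element of $SE(3)$.

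The bulk of the work is bounding the extra coincidences, split into three cases as in Proposition \ref{p:m2}. Case 1, the diagonal case: $\iota_4(\ol{m}_1)\in D_1$ or some of the $\BS^2$-components involved coincide or are antipodal. These lie in a manifestly low-dimensional subset of the base, and the allowed finite intersections along the diagonals occur here. Case 2: one of $(g,\ol{m}_1'), (g,\ol{m}_2')$ lies in $X_3'$. By $(\star\star)$, $X_3'$ has $\mathcal{I}$-dimension $\leq 3$, while the partner varies in the at-most-one-dimensional fiber of $\pi_{X_4/G}$ over $g$ from Lemma \ref{lemma:rank piX/G rotations}, so this case contributes at most an $\mathcal{I}$-dimension $4$ subset of the base. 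Case 3: both $(g,\ol{m}_1'), (g,\ol{m}_2')\in X_4\sm X_3'$ with distinct normals. Varying $g\in SO(3)$ over its three dimensions and picking a partner in each of the two $1$-dimensional $\pi_{X_4/G}$-fibers yields a $5$-dimensional locus, whose projection to the base still has $\mathcal{I}$-dimension $\leq 5$.

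The main obstacle is the Jacobian analysis in Case 3, exactly parallel to the corresponding step in Proposition \ref{p:m2}. One takes local charts at the four points of $M$ that assemble the pairs $\ol{m}_1,\ol{m}_2,\ol{m}_1',\ol{m}_2'$ and writes the constraints
\[
m_1-m_2 = g(m_1'-m_2'),\quad N_{m_i} = gN_{m_i'}\text{ for }i=1,2,
\]
then computes the resulting Jacobian. Requirement \ref{i:shape_so3} of $(\star\star)$, asserting $\rank(g_*S_{m_i'}-S_{m_i})=2$, plays the role played in Proposition \ref{p:m2} by requirement \ref{i:shape_star} of $(\star)$: it forces a block of the Jacobian, analogous to the one appearing in equation \ref{e:submatrix_translation}, to have the correct rank outside a lower-dimensional subset, thereby cutting the solution set down to the claimed dimension. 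Continuity of the $g_i$ in $f$ then follows from continuity of $\iota_4, \tau$ and the algebraic operations involved, exactly as in the translatory case.
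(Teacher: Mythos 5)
Your overall strategy matches the paper's: reduce self-coincidences of $\ti{M}_{2\times 4}$ to pairs $x_1,x_2\in X_4$ sharing their $SO(3)$-coordinate, split according to membership in $X_3'$, and bound the bad locus by an $\mathcal{I}$-dimension $\leq 5$ subset of the six-dimensional base using Lemma \ref{lemma:rank piX/G rotations} and requirement \ref{i:shape_so3} of $(\star\star)$. In your Case 3 you replace the paper's explicit Jacobian computation (an $18\times16$ block matrix on $SO(3)\times U^8$, reduced via Equation \ref{e:submatrix_rotation} to rank $\geq 14$, hence a solution set of dimension $\leq 19-14=5$) by a fiber-product count $3+1+1=5$ at the level of $X_4$. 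This is morally equivalent, but the paper works with eight points of $M$ in charts precisely because $M_{2\times2}$ and $X_4$ are only \emph{images} of manifolds, possibly with self-intersections; your count presupposes that $X_4\sm X_3'$ is an honest $4$-manifold on which $\pi_{X_4/G}$ is a submersion, which is what the chart-level computation certifies. Note also that the full fiber product contains the high-dimensional trivial component over $g=\mathrm{id}$, so one must restrict to pairs witnessing genuine non-injectivity before counting.

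The genuine gap is your Case 1. The diagonal intersections do \emph{not} lie in a low-dimensional subset of the base --- they occur over essentially every fiber --- and the proposition does not ask you to avoid them but to show that, for a good fiber, they consist of \emph{finitely many} points. Your sentence ``these lie in a manifestly low-dimensional subset of the base'' conflates the locus of diagonal intersections with the locus of fibers over which they fail to be isolated, and the finiteness is then only asserted. The paper's Case 4 supplies the missing argument: for $i$ outside the ($\leq3$-dimensional) projection of $X_3'$, the set $G_i=\{\tau(\ol{m})\mid \iota_4(\ol{m})=i\}$ of rotations is finite; one removes the additional ($\mathcal{I}$-dimension $\leq5$) set of fibers $(s,i)$ with $s\in G_i X_1'$; and only then does Lemma \ref{L:gauss_map_finite}, applied to each of the finitely many rotated normals, yield finitely many diagonal intersection points in the fiber. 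You should add this step, and also cover the sub-case of your Case 2 in which \emph{both} points lie in $X_3'$, which the ``partner in a one-dimensional fiber'' argument does not reach; the paper disposes of it by discarding $\BS^2\times\iota_4(\pi_{Y_7/M_{2\times2}}(X_3'))$ wholesale.
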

\begin{proof}
  The argument is similar to the proof of proposition \ref{p:m2}.
  As in the proof for the translational case, $U$ is clearly open,
  and we going to show that its complement has a positive $\mathcal{I}$-co-dimension.

  Let
  \[
  \ol{m}_1,\ldots, \ol{m}_4\in M_{2\times2}',\quad g_{1,2},g_{3,4}\in SO(3)
  \]
  be such that $\iota_4\ol{m}_i\not\in\BR^+\times D_1$, and such that
  denoting the image of  $\ol{m}_i$ in  $M_{2\times 2}$ by $\ti{m}_i$ the
  points
  \[
    (g_{1,2}, \ti{m}_1, \ti{m}_2),\quad
    (g_{3,4}, \ti{m}_3, \ti{m}_4) \quad \in M_{2\times 4}'
  \]
  are both mapped to the same point in $M_{2\times 4}$. Note
  that $\iota_4\ol{m}_2,\iota_4\ol{m}_4\not\in\BR^+\times D_1$
  by the choice of $f$, whereas {\em it is not the case}
  for $\ol{m}_1, \ol{m}_3$. We will get rid of this assumption on
  $\ol{m}_1, \ol{m}_3$ soon.

  By the definition of $M_{2 \times 4}$, this means that
  \begin{gather*}
    \iota_4(\ol{m}_1) = \iota_4(\ol{m}_3) \\
    \iota_4(\ol{m}_2) = \iota_4(\ol{m}_4) \\
    \tau(\ol{m}_1)^{-1} \tau(\ol{m}_2)  = g_{1,2}=g_{3,4}=\tau(\ol{m}_3)^{-1} \tau(\ol{m}_4).
   \end{gather*}
  Rearranging, denoting $g:=\tau(\ol{m}_4) \tau(\ol{m}_2)^{-1}$,
  and using $\psi$ we get:
  \begin{equation}\label{e:g_from_m1_m3}
   \begin{gathered}
     g =  \tau(\ol{m}_4) \tau(\ol{m}_2)^{-1} =
     \tau(\ol{m}_3) \tau(\ol{m}_1)^{-1},\\
     g\ol{m}_1=\tau(\ol{m}_3)\tau(\ol{m}_1)^{-1}
     \psi(\tau\ol{m}_1,\iota_4\ol{m}_1)=\psi(\tau{m}_3,\iota_4\ti{m}_3)
      =  \ol{m}_3,\\
      \text{and similarly }\quad g \ol{m}_2 = \ol{m}_4
   \end{gathered}
  \end{equation}

  Namely, $x_1 = (g, \ol{m}_3) = (g, g\ol{m}_1) \in X_4$ and similarly,
  $x_2 = (g, \ol{m}_4) = (g, g\ol{m}_2) \in X_4$.

  Removing the assertion $g = \tau(\ol{m}_3) \tau(\ol{m}_1)^{-1}$, and
  adding $g$ as an unknown, the above
  system of equations makes sense for arbitrary $\ti{m}_1, \ti{m}_3$, and we
  can remove the hypothesis that $\iota_4(\ol{m}_1), \iota_4(\ol{m}_3) \notin D_1$.

  We now split into cases, based on whether $x_i \in X_3'$ or
  not, for $i = 1,2$, and finally treat the diagonal case.
  For each case $c$, we construct a closed set
  $F_c \subseteq \BS^2 \times \BR^{\geq 0} \times I_3$,
  of positive $\mathcal{I}$-co-dimension. Eventually, we let
  \[U = \BS^2 \times \BR^+ \times I_3 \sm \bigcup_c F_c.\]

  Let us remark that there is a difference between the roles
  of $x_1$ and $x_2$, since the $I_3$ part of $f$ is $\iota_4(\ol{m}_2)$.
  This distinction was forced in the definition of $\ti{M}_{2\times 4}$.

  {\em Case 1:} $(x_1, x_2) \in X_4 \times X_3'$. The set
  $\bar{F}_1 = \iota_4 \circ \pi_{Y_7/M_{2\times 2}} \circ i_{X_4/Y_7}(X_3')$
  is the image of a set of $\mathcal{I}$-dimension $\leq 3$.
  Thus, $F_1 = \BS^2 \times \bar{F}_1$ is a subset
  of $\BS^2\times \mathbb{R}^+ \times I_3$ of $\mathcal{I}$-co-dimension at least 1.

  {\em Case 2:} $(x_1, x_2) \in X'_3 \times (X_4 \sm X'_3)$.
  Consider for each
  $x = (g, \ol{m}_3) \in X'_3$, the collection
  of all $y = (g', \ol{m}_4) \in X_4\sm X'_3$ that correspond to $x$.
  Per equation \ref{e:g_from_m1_m3},
  $g=g'$ and it is completely determined from $\ol{m}_2, \ol{m}_4$ (as
  $\iota_4(x_2)\not\in\BR^+\times D_1$).
  By lemma \ref{lemma:rank piX/G rotations},
  the Jacobian of the equation
  $g = g'$ is of rank 3, and therefore
  for each $x \in X'_3$, the set of solutions
  in $X_4 \sm X'_3$ is (at most) one dimensional.
  Let us decompose $X_3'$ into a countable collection of smooth images
  of the closed 3-dimensional Euclidean ball. For each such component,
  by the rank of the Jacobian, we obtain a 4-dimensional manifold of solutions.

  Let $F_2$ be the projection of this collection to $\BS^2 \times \BR^+ \times I_3$, then it is a set of $\mathcal{I}$-dimension $\leq 4$.

  {\em Case 3:} $(x_1, x_2) \in (X_4 \sm X_3') \times (X_4 \sm X_3')$ and $N_{m_1} \neq N_{m_2}$, $N_{m_3} \neq N_{m_4}$.
  For each $i\in\{1,2,3,4\}$ let $m_{2i-1}, m_{2i}$ be
  the points on $M$ so that
  \[
  \ol{m}_i=(m_{2i-1}-m_{2i}, (m_{2i-1}, N_{m_{2i-1}}),(m_{2i}, N_{m_{2i}})).
  \]
  Let us pick a chart $U\subset\BR^2$ and a chart-map $f\colon U\to M$,
  so that $m_i\in f(U)$.
  Let $p_1,\ldots,p_8\in U$ be eight points so that $f(p_i)=m_i$.
  Then, substituting in Equation \ref{e:g_from_m1_m3}, we see that
  the set of points in $M_{2\times4}$ for which $M_{2\times4}'\to M_{2\times4}$
  is not one to one---and
  where all eight relevant points of $M$ are in $U$---are in one to one
  correspondence
  with the $f$ images of the solutions set of
  \begin{equation}
  \begin{matrix}\label{e:so3cond}
  g_{1,2} = g_{3,4}, &\\
  g(m_{1}-m_{2})=m_{5}-m_{6},& \\
  g(m_{3}-m_{4})=m_{7}-m_{8},& \\
  g\frac{\nabla f(q_{i})}{\|\nabla f(q_{i})\|}=\frac{\nabla f(q_{i + 4})}{\|\nabla f(q_{i + 4})\|},&  \text{ for }i \in \{1,2,3,4\}
  \end{matrix}
  \end{equation}
  for $(q_1, \ldots, q_8)\in U^8$ and $g$ is defined as above.

  Instead of solving the above equation in $U^8$, we are going to ignore the
  first equation and solve the system of equations in $SO(3) \times U^8$ (treating
  $g$ as an unknown).
  Any solution for the original system of equations provides a solution to the
  modified system, and thus bounding the set of solution for the modified
  system suffices.

  Let us compute the Jacobian matrix of this map around $p_1,\ldots,p_8$
  with respect to the local coordinates
  $x_i, y_i$ about the $p_i$-s. We ignore the $SO(3)$-coordinate and
  concentrate on the remaining 18 rows by 16 columns sub-matrix,
  where below we follow the following conventions:
  \begin{itemize}
  \item each entry stands for a 3 rows by 2 columns sub matrix, and
  \item the order of columns is $q_1,q_2,q_5,q_6,q_3,q_4,q_7,q_8$; i.e.\
    the functions on these columns are evaluates at $p_1,p_2,p_5,p_6,p_3,p_4,p_7,p_8$
    respectively, and
  \item the order of the rows is adjusted to form the block matrix:
   \end{itemize}
  {\tiny
  \[\begin{pmatrix}
  g\frac{\partial N_{m_1}}{\partial x_1, y_1}&0_{3\times 2}&
  -\frac{\partial N_{m_5}}{\partial x_5, y_5}&0_{3\times 2}&
  0_{3\times 2} &0_{3\times 2} &0_{3\times 2} &0_{3\times 2}\\
  0_{3\times 2} &g\frac{\partial N_{m_2}}{\partial x_2, y_2}&
  0_{3\times 2} &-\frac{\partial N_{m_6}}{\partial x_6, y_6}&
  0_{3\times 2} &0_{3\times 2} &0_{3\times 2} &0_{3\times 2}\\
  g\frac{\partial f}{\partial x_1, y_1}&
  -g\frac{\partial f}{\partial x_2, y_2}&
  -\frac{\partial f}{\partial x_5, y_5}&
  \frac{\partial f}{\partial x_6, y_6}&
  0_{3\times 2} &0_{3\times 2} &0_{3\times 2} &0_{3\times 2}\\
  0_{3\times 2} &0_{3\times 2} &0_{3\times 2} &0_{3\times 2} &
  g\frac{\partial N_{m_3}}{\partial x_3, y_3}&0_{3\times 2}&
  -\frac{\partial N_{m_7}}{\partial x_7, y_7}&0_{3\times 2}\\
  0_{3\times 2} &0_{3\times 2} &0_{3\times 2} &0_{3\times 2} &
  0_{3\times 2} &g\frac{\partial N_{m_4}}{\partial x_4, y_4}&
  0_{3\times 2} &-\frac{\partial N_{m_8}}{\partial x_8, y_8}\\
  0_{3\times 2} &0_{3\times 2} &0_{3\times 2} &0_{3\times 2} &
  g\frac{\partial f}{\partial x_3, y_3}&
  -g\frac{\partial f}{\partial x_4, y_4}&
  -\frac{\partial f}{\partial x_7, y_7}&
  \frac{\partial f}{\partial x_8, y_8}\\
  \end{pmatrix}.\]
  }
  As in the proof of proposition \ref{p:m2} we may
  assume w.l.o.g.\ (via local change of coordinates),
  that $p_i=(x_i, y_i)$ are such that:
  \[
  g \frac{\partial f}{\partial x_i, y_i}|_{p_i}=
  \frac{\partial f}{\partial x_{i + 4}, y_{i + 4}}|_{p_{i + 4}}.\]

  Similarly, the differentials of the normals are simply the shape operators under those coordinates.

  Adding the column 1,2 to 3,4 and 5,6 to 7,8, eliminating (the trivial)
  rows 1,4 and rearranging
  we obtain a diagonal block matrix, with two blocks of the form ($i = 1, 3$):
  \begin{equation}\label{e:submatrix_rotation}
   \begin{pmatrix}
    g_* S_{m_i} - S_{m_{i+4}} & 0_{2\times 2} & -S_{m_{i+4}} & 0_{2\times 2} \\
    0_{2\times 2} & g_* S_{m_{i + 1}} - S_{m_{i + 5}} & 0_{2\times 2} & -S_{m_{i + 5}}  \\
	0_{3\times 2} & 0_{3\times 2} & -T_{m_{i + 4}} & T_{m_{i + 5}}
    \end{pmatrix}
  \end{equation}
  where $T_{m_{i + 4}}, T_{m_{i + 5}}$ are matrices of rank 2.
  If we assume further that $N_{m_{i + 4}} \neq N_{m_{i  + 5}}$,
  then the rank of the corresponding block is 3.

  Thus, in the non-diagonal case,
  the rank of this matrix is at least
  \[3 + \rank(g_* S_{m_1} - S_{m_5}) + \rank(g_* S_{m_2} - S_{m_6}).\]
  Similarly, the rank of the second block is at least
  \[3 + \rank(g_* S_{m_3} - S_{m_7}) + \rank(g_* S_{m_4} - S_{m_8}).\]
  Recall that
  \[
  (g, \ti{m}_3) = (g, g\ti{m}_3),\quad (g, \ti{m}_4) = (g, g\ti{m}_2)
  \in X_4\sm X'_3.
  \]
  whence, by Requirement \ref{i:shape_so3} in $\star\star$, we conclude that the
  rank of the Jacobian matrix is at least 14.
  Thus, this set contributes a countable collection of sub-manifolds of
  dimension $\leq 19 - 14 = 5$. Let $F_3$ be its projection
  to $\BS^2 \times \mathbb{R}^+ \times I_3$,
  then its $\mathcal{I}$-dimension is $\leq 5$.

  {\it Case 4:} the diagonal case, $N_{m_1} = N_{m_2}$. Recall that $f$ is composed
  from a normal $s \in \BS^2$ and an element $i \in \BR^+ \times I_3$.

  For each $i \in \BR^+ \times I_3$ that does not belong to the projection
  of $X_3'$, there are only finitely many $\ol{m} \in M_{2\times 2}$ such
  that $\iota_4(\bar{m}) = i$. In particular, the set
  $G_i = \{\tau(\ol{m}) \mid \ol{m} \in M_{2\times 2},\, \iota_4(\ol{m}) = i\}$
  is finite.

  Let $F_4$ be the set of all
  $f = (s, i) \in \BS^2 \times \BR^+ \times I_3$ such
  that either $i$ belongs to the projection of $X_3'$ or
  $s \in G_i X_1'$. Since everything is smooth, one can decompose the set
  $\bigcup \{\{i\} \times G_i \mid i \in \BR^+ \times I_3\}$ into countably
  many smooth functions from
  $\range \iota_4 \restriction M_{2\times 2} \setminus \bar{F}_1$ to $SO(3)$.
  Therefore $F_4$ is of $\mathcal{I}$-dimension $\leq 5$.

  Since $M$ satisfies $\star\star$, it satisfies $\star$ as well.
  Applying lemma \ref{L:gauss_map_finite}, we conclude that
  for $f \notin F_4$, the Gauss map is finite around points
  with $N_m = N_{m_2}$, thus concluding the proof.
\end{proof}
%
\section{Representation theoretic aspects}\label{S:rep}
%
The object of this section is give an explicit representation theoretic
construction of the Fourier coefficients of the distribution
$\gd_{\ti{M}_{2\times4}}$. As alluded in the toy problem in the beginning of Section
\ref{S:results} we simply give a representation theoretic analog of the
construction of $M_{2\times 2}$ and $\ti{M}_{2\times4}$; thus connecting the
geometric construction to the functions $\rho_{n,m}$ (see \ref {d:rho}) and the
coefficients  $f^{\mu}_{d, n,m,n',m'}$ and $F^{\mu}_{d, d'}$ (see \ref{d:fF}).
An important stepping stone in this direction is to generalize the following
classical theorem:
\begin{thm}[Moments of self convolution, essentialy due to De Moivre]\label{t:con_moments}
  Let $f$ be a finite distributions on $\BR^n$, let
  $\ti{f}:=f\circ (x\mapsto-x)$,
  and denote by $\cM_-$ the moment
  generating function, then $\cM_{f*\ti{f}}=\cM_f\cdot\cM_{\ti{f}}$.
\end{thm}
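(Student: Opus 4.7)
The plan is to prove the slightly more general identity $\cM_{f*g} = \cM_f\cdot\cM_g$ for any pair of finite distributions on $\BR^n$ and then specialize to $g=\ti{f}$. This is the classical observation underlying De Moivre's formula for the MGF of an independent sum: convolution intertwines with multiplication under the exponential transform, so that the coefficient-wise statement is just the binomial/multinomial expansion of a power of a sum.

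First I would fix the convention that $\cM_f(t)$ is read as a formal power series in $t=(t_1,\ldots,t_n)$, whose coefficient at a multi-index $\alpha$ is $\frac{1}{\alpha!}$ times the $\alpha$-moment $\int x^\alpha\,df(x)$ of $f$; the finiteness hypothesis on $f$ (and on $g$) guarantees that all these coefficients are well defined. Under this reading, the whole identity reduces, degree by degree, to the combinatorial statement
\[
\int y^\alpha\,d(f*g)(y) \;=\; \sum_{\beta+\gamma=\alpha}\binom{\alpha}{\beta}\int x^\beta\,df(x)\int z^\gamma\,dg(z),
\]
which is immediate from the multinomial expansion of $(x+z)^\alpha$ applied inside the double integral $\iint (x+z)^\alpha\,df(x)\,dg(z)$, and then comparing with the right-hand side of the definition of the product of two formal power series.

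The only thing to verify is that the above double integral is indeed the correct unfolding of $\int y^\alpha\,d(f*g)(y)$; this is just the change of variables $(x,y)\mapsto(x,y-x)$ together with the definition of $f*g$ and Fubini. No analytic subtlety arises, because at each fixed total degree $|\alpha|$ one is manipulating only finitely many terms, each a finite moment of a finite distribution, so the interchange of integrals is unconditional. Specializing $g=\ti{f}$ then yields the stated equation $\cM_{f*\ti{f}}=\cM_f\cdot\cM_{\ti{f}}$. I do not expect any step to pose a genuine obstacle: the theorem is essentially a bookkeeping statement, and the role of the proof is merely to record which piece of elementary combinatorics is being invoked.
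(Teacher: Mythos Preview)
Your proof is correct; the paper does not actually supply a proof of this theorem at all --- it is stated as a classical fact (attributed to De Moivre) and used as a black box. Your argument is the standard one: unfold the $\alpha$-moment of $f*g$ as $\iint (x+z)^\alpha\,df(x)\,dg(z)$ via the definition of convolution and Fubini, expand $(x+z)^\alpha$ by the multinomial theorem, and match coefficients with the formal product $\cM_f\cdot\cM_g$; the finiteness hypothesis makes every step unconditional. There is nothing to compare against, and nothing is missing from your write-up.
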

Note that for every $g \in \BR^n$, $f * \ti{f} = \eta_{g, \BR^n} f * \eta_{g, \BR^n} \ti{f}$, where $\eta$ denotes the group action. So, the construction is
invariant under the action of the group of translations.
\subsection*{Group invariants of self convolution}
Throughout this subsection, $G$ is a compact Lie Group.
\begin{prp}\label{P:todo1}
  Let
  $E\to B, E'\to B'$ two $G$ bundles.
  Then $E\times E'$ is a $G$ bundle under the diagonal action, where the base
  of the bundle --
  $(E\times E')/ G$, is itself a $G$ bundle over $B\times B'$.
\end{prp}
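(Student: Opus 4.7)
The plan is to split the proposition into its two claims and verify each by standard arguments. The first claim — that $E\times E' \to (E\times E')/G$ is a principal $G$-bundle under the diagonal action — I would settle by checking that the diagonal $G$-action on $E\times E'$ is both free and proper. Freeness is immediate because $G$ already acts freely on $E$ alone: if $(e,e')\cdot g = (e,e')$ then in particular $e\cdot g = e$, forcing $g = 1_G$. Properness is automatic since $G$ is compact. The quotient manifold theorem for free proper Lie group actions then gives that $(E\times E')/G$ carries a unique smooth structure making the quotient map a principal $G$-bundle, and the first claim is done.

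For the second claim, the cleanest framing is to observe that $E\times E' \to B\times B'$ is itself a principal $(G\times G)$-bundle, each factor acting on its own side. The diagonal subgroup $G \subseteq G\times G$ acts via this larger structure, so the quotient $(E\times E')/G$ is the associated fiber bundle of $E\times E' \to B\times B'$ with typical fiber $(G\times G)/G \cong G$, viewed as a $(G\times G)$-homogeneous space under two-sided multiplication. The associated-bundle construction is standard and produces local trivializations $\Pi^{-1}(U \times U') \cong (U\times U') \times G$ of the projection $\Pi \colon (E\times E')/G \to B\times B'$ defined by $\Pi([e,e']) := (\pi(e),\pi'(e'))$, which is well-defined because the diagonal action preserves fibers over $B\times B'$.

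For the reader wanting an explicit calculation, one can choose local trivializations $E|_U \cong U\times G$ and $E'|_{U'} \cong U'\times G$; then the diagonal action becomes simultaneous right multiplication on the $G\times G$ factor, and its orbit space is parameterized by the $G_{\mathrm{diag}}$-invariant $q := g_2 g_1^{-1}$, yielding the local trivialization. The main bookkeeping step — which I expect to be the only mildly technical point — is to compute how $q$ transforms between overlapping charts: a direct computation shows $q \mapsto g'_{\delta\gamma}(u')\, q\, g_{\beta\alpha}(u)^{-1}$, where $g_{\beta\alpha}$ and $g'_{\delta\gamma}$ are the cocycles of $E$ and $E'$. This exhibits the structure group of the quotient bundle as $G\times G$ acting on fiber $G$ by two-sided multiplication, completing the verification that $(E\times E')/G$ is a $G$-bundle over $B\times B'$. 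I do not anticipate any genuine obstacle; the entire argument is standard bundle-theoretic bookkeeping, and the only thing worth flagging is that when $G$ is non-abelian there is no canonical global $G$-action on the quotient making it a principal $G$-bundle — the statement here is the weaker one of being a locally trivial $G$-fiber bundle.
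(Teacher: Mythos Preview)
Your proposal is correct and follows the standard textbook route: freeness plus properness (from compactness of $G$) gives the principal bundle structure on $E\times E'\to (E\times E')/G$, and the associated-bundle viewpoint with fiber $(G\times G)/G_{\mathrm{diag}}\cong G$ handles the second claim cleanly. Your caveat at the end --- that for non-abelian $G$ the quotient is a $G$-fiber bundle rather than a principal one --- is a useful clarification the paper leaves implicit.

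The paper itself provides no proof of this proposition at all; it is stated as a standard fact and immediately followed by the next proposition. So your write-up is strictly more than what the paper offers, and there is nothing to compare against beyond noting that the authors evidently regarded the result as routine enough to omit.
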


\begin{prp}\label{P:l2fiber}
  Let $G$ be a Lie group with a Haar measure $\mu$.
  Let $\pi\colon E\to B$ be a $G$ bundle, let $\pi^*\colon L^2(B)\to L^2(E)$ be
  the induced pullback map. Then $(L^2(E))^G=\pi^* L^2(B)$, and
  the map
  \[
  \begin{aligned}
    \pi_*:L^2(E)&\to L^2(B)\\
    f&\mapsto \left(b\mapsto\int_{\pi^{-1}(b)}f d\mu\right)
  \end{aligned}
  \]
  is well defined; Moreover if $\mu(G)=1$, then $\pi_*$ is left inverse of
  $\pi^*$.
\end{prp}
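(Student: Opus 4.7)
The plan is to prove both directions of the identity $(L^2(E))^G = \pi^* L^2(B)$ by passing to local trivializations and invoking Fubini, and then deduce the remaining assertions from the same local picture. First I would verify the easy inclusion $\pi^* L^2(B) \subseteq (L^2(E))^G$: since each fiber $\pi^{-1}(b)$ is precisely a single $G$-orbit, any pullback $\pi^* h$ is constant on fibers and therefore fixed by the $G$-action. Square-integrability transfers cleanly because on a trivializing chart $\pi^{-1}(U) \cong U \times G$, the natural measure on $E$ restricts to the product of the base measure and the Haar measure $\mu$, giving $\int_{\pi^{-1}(U)} |\pi^* h|^2 = \mu(G) \int_U |h|^2$, which is finite as $G$ is compact.

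For the reverse inclusion, given $f \in (L^2(E))^G$, I would cover $B$ by trivializing open sets $U_\alpha$ and write $f$ in a trivialization as $f_\alpha(b, g)$. The $G$-invariance of $f$, combined with the (left) regular action of $G$ on the fiber, forces $f_\alpha(b, g) = f_\alpha(b, g')$ for all $g, g' \in G$ and almost every $b \in U_\alpha$. Hence $f$ descends to a function $h_\alpha(b) := f_\alpha(b, e)$ defined almost everywhere, and the $h_\alpha$ automatically agree on overlaps because the descent depends only on the constant value taken by $f$ on the fiber, not on the chosen trivialization. This produces a globally defined $h$ with $\pi^* h = f$ almost everywhere, and $h \in L^2(B)$ follows from Fubini together with $\mu(G) < \infty$.

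For the map $\pi_*$, the same local picture gives well-definedness: Cauchy--Schwarz applied to each fiber (of finite measure $\mu(G)$) shows $f|_{\pi^{-1}(b)} \in L^1$ for almost every $b$, and Fubini combined with another application of Cauchy--Schwarz yields the bound $\|\pi_* f\|_{L^2(B)} \leq \sqrt{\mu(G)}\, \|f\|_{L^2(E)}$, so $\pi_*$ is continuous into $L^2(B)$. Finally, under the normalization $\mu(G) = 1$, a direct computation gives $\pi_*(\pi^* h)(b) = \int_{\pi^{-1}(b)} h(b)\, d\mu = \mu(G)\, h(b) = h(b)$, so $\pi_* \circ \pi^* = \mathrm{id}_{L^2(B)}$.

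The step requiring the most care is the descent in the reverse inclusion: one must verify that the identification of invariant functions with functions on the base is independent of the chosen trivialization, i.e.\ that the transition cocycle of the bundle --- which acts by translation on each fiber --- preserves the subspace of fiberwise-constant functions. This is essentially tautological once the $G$-action on fibers is identified with the regular representation, but it is the one place where the genuine $G$-bundle structure of $E$ (as opposed to a mere fiber bundle with fibers homeomorphic to $G$) is used, and it underlies why the final composition $\pi_* \circ \pi^*$ equals the identity rather than some more complicated averaging operator.
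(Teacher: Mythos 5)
The paper states Proposition \ref{P:l2fiber} without proof, treating it as a standard fact about compact group actions (note that the subsection header declares $G$ compact, which you correctly use). Your argument is the standard one and is essentially correct: local trivialization, the product structure of the measure, Fubini, and Cauchy--Schwarz on fibers give all four assertions, and your bound $\|\pi_* f\|_{L^2(B)}\leq\sqrt{\mu(G)}\,\|f\|_{L^2(E)}$ is exactly what well-definedness requires. Making explicit the (otherwise unstated) assumption that the measure on $E$ is locally the product of the base measure with Haar measure is a genuine improvement over the paper's silence on this point.

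One step should be tightened. In the reverse inclusion you define $h_\alpha(b):=f_\alpha(b,e)$, but $f$ is only an $L^2$-class and the slice $\{g=e\}$ is null in $U_\alpha\times G$ whenever $\dim G>0$, so this evaluation is not literally meaningful. Likewise, $G$-invariance of an $L^2$-class means that for each fixed $g$ one has $f(\,\cdot\,g)=f$ almost everywhere on $E$; to conclude that $f$ is essentially constant on almost every fiber you must integrate this over $g\in G$ and apply Fubini, obtaining that for a.e.\ $b$ the function $g\mapsto f_\alpha(b,g)$ is a.e.\ constant. The clean repair is to define $h_\alpha(b):=\mu(G)^{-1}\int_G f_\alpha(b,g)\,d\mu(g)$ --- i.e.\ $h=\mu(G)^{-1}\pi_*f$ --- and then verify $\pi^*h=f$ a.e.\ from invariance plus Fubini; this also makes the independence of the trivialization automatic, since the transition cocycle acts by translations on $G$, under which Haar measure (and hence the fiberwise average) is invariant. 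With that adjustment the proof is complete and correct.
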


\begin{cor}\label{C:l2fiber}
  With the notations of the proposition above we have
  \[
  (L^2(E)\otimes L^2(E))^G=L^2(E\times E)^G=L^2((E\times E)/G).
  \]
\end{cor}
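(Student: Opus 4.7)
The plan is to prove the two equalities separately, both of which reduce quickly to facts already established. For the first equality, I would use the standard Hilbert space identification $L^2(E)\otimes L^2(E)\cong L^2(E\times E)$ (tensor product of $L^2$ spaces is the $L^2$ of the product, with respect to the product measure). Under the natural diagonal $G$-action on $E\times E$, this identification is $G$-equivariant: a simple tensor $f\otimes f'$ is sent to the function $(e,e')\mapsto f(e)f'(e')$, and the diagonal action $g\cdot (f\otimes f') = (g\cdot f)\otimes(g\cdot f')$ corresponds precisely to the action $(e,e')\mapsto (g^{-1}e,g^{-1}e')$ on $E\times E$. Taking $G$-invariants on both sides yields the first equality.

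For the second equality, I would invoke Proposition \ref{P:todo1}, which asserts that the diagonal action of $G$ on $E\times E$ makes this a $G$-bundle over $(E\times E)/G$. This puts us in a position to apply Proposition \ref{P:l2fiber} directly to the bundle $E\times E \to (E\times E)/G$, which gives $L^2(E\times E)^G = \pi^* L^2((E\times E)/G)$, where $\pi$ is the quotient map. Since $\pi^*$ is an isometric embedding (for the correctly normalized Haar measure, as used in Proposition \ref{P:l2fiber}), this identifies $L^2(E\times E)^G$ with $L^2((E\times E)/G)$, completing the proof.

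I do not anticipate a genuine obstacle: both steps are essentially formal consequences of the propositions already in hand. The only thing requiring mild care is checking that the $G$-equivariance of the tensor product isomorphism is indeed with respect to the diagonal action (and not some twist), and that the Haar measure normalization used in Proposition \ref{P:l2fiber} is consistent with the product measure on $E\times E$. Both are straightforward to verify.
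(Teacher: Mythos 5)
Your argument is correct and is precisely the one the paper intends: the corollary is stated without proof as an immediate consequence of Propositions \ref{P:todo1} and \ref{P:l2fiber}, combined with the standard $G$-equivariant identification $L^2(E)\hat\otimes L^2(E)\cong L^2(E\times E)$. Your two cautionary remarks (diagonal equivariance of the tensor isomorphism, and the normalization of the Haar measure so that $\pi^*$ is an isometry onto $L^2(E\times E)^G$) are exactly the right points to check, and both hold since $G$ is compact throughout this subsection.
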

\begin{rmr}\label{R:l2fiber}
  With the notations of the propositions and corollary above, assume that
  $i\colon B_0\subset B$ is such that the $B\sm B_0$ is of measure zero, and $E_0$ --
  the restriction of $E\to B$ is a trivial bundle. Choosing a trivialization
  to $E_0\to B_0$, we have an isomorphism
  \[
  \begin{aligned}
    (E_0\times E_0) / G&\cong \left((B_0\times G)\times (B_0\times G)\right) / G\\
    =&
    B_0\times B_0 \times ((G\times G) / G)\cong B_0\times B_0\times G,
    \end{aligned}
  \]
  where the $G$-action on the product spaces is diagonal,
  and the last copy of $G$ above can be identified with the
  anti-diagonal.
  We stress that the trivialization of this bundle depends (in a very explicit
  way) on the trivialization we choose to $E_0\to B_0$.
\end{rmr}

\subsection*{Representation theoretic interpretation of the construction}
\begin{prd}[Wigner D-matrices and some of their properties]
  Wigner $D$-function matrices are a basis for $[j]\otimes[j]^\vee$
  for each $j$.
  The matrix $\left(D^j_{m'm}(\ga,\gb,\gc)\right)_{m'm}$ is the
  representation of the $z-y-z$ rotation
  $\mathcal{R}=\mathcal{R}(\ga,\gb,\gc)$, using the basis of the $Y_{jm}$, namely
  \[
  \left(Y_{jm'}(\mathcal{R}r)\right)_{m'}=\left(D^j_{m'm}(\ga,\gb,\gc)\right)_{m'm}\left(Y_{jm}(r)\right)_m.
  \]

   By Peter-Weyl theorem, the $D^j_{m'm'}$s form an orthogonal basis of $L^2(SO(3))$.
  Moreover, their norms are given by $\sqrt{\frac{8}{2j+1}}\pi$.
  The Kronecker product formula gives the identity
  \[
  D^j_{mk}D^{j'}_{m'k'}=\sum_{J=|j-j'}^{j+j'}
  \langle jmj'm'|J(m+m')\rangle\langle jkj'k'|J(k+k')\rangle
  D^J_{(m+m')(k+k')},
  \]
  where $\langle|\rangle$ are the Clebsch-Gordan coefficients.
  Finally, the pullback of the Hopf fibration $SO(3)\to \BS^2$
  (coming the action of rotation on the point $(1, 0, 0)\in \BS^2$) is
  given by $D^j_{m0}(\ga,\gb,\gc)=\sqrt{\frac{4}{2j+1}}Y_{jm}^*(\gb,\ga)$.
\end{prd}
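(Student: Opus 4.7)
The plan is to verify each assertion in turn, drawing on the classical representation theory of $SO(3)$. Each irreducible real representation of $SO(3)$ is realized (up to isomorphism) as one of the spaces $[j]$ for a non-negative half-integer $j$, with $\dim[j] = 2j+1$, and the $Y_{jm}$ with $|m|\leq j$ provide an orthonormal basis of $[j]$ inside $L^2(\BS^2)$. By definition, the $D^j_{m'm}(\alpha,\beta,\gamma)$ are the matrix coefficients of the $z$-$y$-$z$ rotation $\mathcal{R}(\alpha,\beta,\gamma)$ acting on $[j]$ in this basis; these matrix coefficients span $\operatorname{End}([j]) \cong [j]\otimes[j]^\vee$, and since their number equals $\dim([j]\otimes[j]^\vee)$, they form a basis. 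The transformation rule $Y_{jm'}(\mathcal{R}r)=\sum_m D^j_{m'm}(\alpha,\beta,\gamma)Y_{jm}(r)$ is then exactly the statement that $\mathcal{R}$ acts by the matrix $D^j$.

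For the Peter-Weyl statement and the norm, I would invoke the Peter-Weyl theorem directly: matrix coefficients of irreducible unitary representations of a compact group form an orthogonal basis of $L^2(G)$, and the Schur orthogonality relations give $\int_{SO(3)} D^j_{mk}\overline{D^{j'}_{m'k'}}\,d\mu = \frac{\mu(SO(3))}{2j+1}\delta_{jj'}\delta_{mm'}\delta_{kk'}$. In the $z$-$y$-$z$ parametrization with Haar measure $\sin\beta\,d\alpha\,d\beta\,d\gamma$, the total volume is $\int_0^{2\pi}\!\int_0^\pi\!\int_0^{2\pi}\sin\beta\,d\alpha\,d\beta\,d\gamma = 8\pi^2$, so $\|D^j_{mk}\|_2 = \sqrt{8\pi^2/(2j+1)} = \sqrt{8/(2j+1)}\,\pi$, as claimed.

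The Kronecker product identity is the matrix-coefficient form of the Clebsch-Gordan decomposition $[j]\otimes[j']\cong\bigoplus_{J=|j-j'|}^{j+j'}[J]$. Concretely, one writes the tensor product basis vectors $|jm\rangle\otimes|j'm'\rangle$ in terms of the coupled basis $|J(m+m')\rangle$ via the Clebsch-Gordan coefficients $\langle jmj'm'|J(m+m')\rangle$, applies $\mathcal{R}$ in both bases, and reads off the matrix identity by comparing coefficients in the two expansions; the $k,k'$ factor arises the same way on the domain side.

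Finally, the Hopf-fibration formula comes from evaluating $D^j_{m0}$ via its definition: the column $m'=0$ of the matrix is the image of the highest-weight axial vector $|j,0\rangle$, which corresponds under the $Y_{jm}$ identification to a function on $\BS^2$ that is invariant under rotation about the chosen axis. Applying $\mathcal{R}(\alpha,\beta,\gamma)$ to the base point $(1,0,0)$, and using that $Y_{j0}$ is axially symmetric and proportional to the associated Legendre polynomial, reduces $D^j_{m0}(\alpha,\beta,\gamma)$ to a suitable multiple of $Y_{jm}^*$ evaluated at the spherical coordinates $(\beta,\alpha)$; matching normalizations via the known norms gives the stated proportionality constant. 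There is no real obstacle here, as every step is classical; the main care is in tracking normalization conventions consistently between the complex $Y_{jm}$ basis, the real basis $Y_{jm}^\BR$ of Equation \ref{e:real_ynm}, and the Haar normalization on $SO(3)$.
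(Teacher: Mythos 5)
Your derivation is correct and is exactly the standard one; the paper itself offers no proof of this Proposition-Definition, presenting it as a recap of classical facts, so there is nothing to diverge from. Two small points of care: the vector $|j,0\rangle$ is the zero-weight (axially symmetric) vector, not the highest-weight one, though your use of its axial symmetry is the right idea; and the constant in the Hopf-pullback formula is where all the real work hides --- under the orthonormal convention for $Y_{jm}$ the textbook constant is $\sqrt{4\pi/(2j+1)}$ rather than the paper's $\sqrt{4/(2j+1)}$, and the paper's own normalization remarks for the $Y_{nm}$ are nonstandard, so ``matching normalizations'' genuinely requires fixing the conventions explicitly rather than being a formality.
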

\begin{dsc}\label{d:rep_theory_interpretation}
  Let $\mu$ be a distribution over $\mathbb{R}^3 \times \BS^2$.

  Recall the definitions of $\rho_{n,m}(\mu), f^{\mu}_{d,n,m,n',m'}$ and $F^{\mu}_{d,d}$
  from definitions \ref{d:rho} and \ref{d:fF}.

  Clearly, any distribution is uniquely determined by its moments,
  as they correspond to a dense collection of test functions.
  We call the coefficients in this
  representation {\em the moments of $\mu$}.

  We are particularly interested in distributions which are the delta functions
  of a surface. Those distributions are positive and
  by Riesz–Markov–Kakutani representation theorem, the other direction is true as well:
  if such a tempered distribution is positive then it corresponds to a positive Radon measure.
  Thus, a surface is completely determined by the moments of its delta
  function, and by the representation theorem, this dependency is continuous in
  the coefficients.

  By the Peter-Weyl theorem, $L^2(\BS^2)$ is the closure of the
  direct sum $\oplus_{n=0}^\infty[n]$,
  where $[n]$ is the irreducible representation of $SO(3)$ of dimension
  $2 \cdot n + 1$;
  likewise, $L^2(SO(3))$ is the closure of the direct sum
  $\oplus_{n=0}^\infty([n]\otimes[n]^{\vee})$.
  This representation provides a standard orthonormal basis for
  $L^2(SO(3))$. So, we use the term {\it coefficients} to refer
  to the coefficients of the representation of
  a distribution $\mu$ on $L^2(SO(3))$ as an infinite sum, using this basis.

  Finally, for a distribution in product spaces such that we
  already fixed a basis to each one of them, we use the term {\it coefficients}
  to denote the coefficients in tensor product of the spaces,
  taking the product of the bases.
\end{dsc}
\begin{ntt}
  Let us denote by $\iota_{\mathbb{R}^3}(f) = f * \ti{f}$.
\end{ntt}
\begin{lma}
Let $\mu$ be a distribution over $\BR^3 \times \BS^2$.
The coefficients of degree $\leq d$ of $\iota_{\BR^3}(\mu)$ are quadratic polynomials
at the coefficients of degree $\leq d$ of $\mu$.
\end{lma}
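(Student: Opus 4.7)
The plan is to decompose $\mu$ along the spherical harmonic basis of the $\BS^2$-factor and reduce the claim, block by block, to Theorem \ref{t:con_moments}, which already handles the pure $\BR^3$-setting. Writing
\[
\mu = \sum_{n,m} \rho_{n,m}(\mu) \otimes Y^{\BR}_{n,m},
\]
and using that $\iota_{\BR^3}$ acts only in the $\BR^3$-coordinate with the two $\BS^2$-factors of $\iota_{\BR^3}(\mu)$ sitting as spectators, the $(n,m,n',m')$-block of $\iota_{\BR^3}(\mu)$ in the tensor-product basis $Y^{\BR}_{n,m} \otimes Y^{\BR}_{n',m'}$ is exactly the $\BR^3$-convolution of $\rho_{n,m}(\mu)$ with $\widetilde{\rho_{n',m'}(\mu)}$; this is the distribution whose degree-$d$ moment truncation is $f^\mu_{d,n,m,n',m'}$ from Definition \ref{d:fF}.

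Next, I would apply Theorem \ref{t:con_moments} to each such block: its moment generating function is the product of the moment generating functions of its two factors. Extracting the coefficient of a monomial of total spatial degree $d$ in this product yields a finite sum of terms, each being a moment of $\rho_{n,m}(\mu)$ of degree $a$ times a moment of $\widetilde{\rho_{n',m'}(\mu)}$ of degree $d-a$, for $0 \leq a \leq d$. Since the moments of $\tilde{f}$ equal $(-1)^{|\alpha|}$ times those of $f$, the second factor is (up to sign) a moment of $\rho_{n',m'}(\mu)$ of degree $\leq d$. Both factors are, via Definition \ref{d:rho}, particular coefficients of $\mu$ of spatial degree $\leq d$; hence each block coefficient of $\iota_{\BR^3}(\mu)$ of total spatial degree $\leq d$ is a bilinear (and in particular, a homogeneous degree-$2$) polynomial in the coefficients of $\mu$ of degree $\leq d$.

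I do not expect any genuine obstacle: the content is essentially Theorem \ref{t:con_moments} applied in the presence of a spectator factor. The only bookkeeping is to verify that the change between the complex and the real spherical harmonics via Equation \ref{e:real_ynm} is an orthogonal base change with explicit constants, so it preserves the polynomial structure of the expressions; and to make precise the passage between the monomial basis used to expand moment generating functions and the basis $r^a Y^{\BR}_{bc}$ in which coefficients are indexed, as already noted in the remark following Definition \ref{d:fF}.
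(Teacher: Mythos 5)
Your proposal is correct and follows essentially the same route as the paper: the paper's proof likewise observes that $\iota_{\BR^3}$ acts fiber-wise over the $\BS^2$-factors and then invokes Theorem \ref{t:con_moments} together with the formal power series structure of the moment generating function. Your version simply spells out the block decomposition along spherical harmonics and the degree bookkeeping that the paper leaves implicit.
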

\begin{proof}
Since the convolution in the definition of $\iota_{\BR^3}$ is defined fiber-wise on $\BS^2$, it is clear that we just need to address the $\BR^3$ part.
Using theorem \ref{t:con_moments}, and the fact that $\mathcal{M}_\mu$ is a formal power series, the result follows.
\end{proof}
\begin{rmr}
Let $M$ be a compact twice differentiable surface, embedded
into $\BR^3\times \BS^2$. By the definition, $M_{2\times 2}$
is the support of $\iota_{\BR^3}(\gd_M)$. Moreover, if $M$ satisfies
$\star$, then the self intersections and the degeneracy locus are
relatively null, $\iota_{\BR^3}(M) = \gd_{M_{2\times 2}}$ .
\end{rmr}
Note that since the spaces $V_d$ are $SO(3)$ invariant, one may compute the
product of the moment generating functions also after changing
from the monomial basis to the standard $SO(3)$-representations
basis.

Thus, we would like to compute the coefficients of the distribution
$\iota_{\BR^3}(\mu) \in L^2(\BR^3 \times \BS^2 \times \BS^2)$
by the decomposition of this space to $SO(3)$-irreducible representations.

\begin{rmr}[Embeding moment functionals on $L^2(\BR^3)$ in functionals on $\BR^+\times \BS^2$] There is an isomorphism
  \[ \bigoplus_{n \leq d,\text{ and } 2|n-d}r^d[n]^\vee\to
  \bigoplus_{d,n}r^d[n]^\vee,
  \]
  where we think on the first space as operators on $L^2(\BR^3)$ and on
  the second as operators on $L^2(\BR^+\times \BS^2)$.
\end{rmr}
The following lemma is a effective form of proposition-definition \ref{prd:d_s2cubed}.
\begin{lma}
There is an effective isomorphism
\[L^2(\BR^3) \hat\otimes L^2(\BS^2) \hat\otimes L^2(\BS^2) \cong L^2(SO(3)) \hat\otimes L^2(\mathbb{R}^+\times I_3).\]
Specifically, we consider the basis for dense set of functionals on
the left hand side above given by using the
spherical harmonics as the
basis for $L^2(S)^2$, and the isomorphism
\[
 (\bigoplus_{n \leq d} V_n)\otimes(\bigoplus_{n\leq d} [n]^\vee)\otimes(\bigoplus_{n\leq d} [n]^\vee) \cong
    \left(\bigoplus_{n \leq d,\text{ and } 2|n-d}r^d[n]^\vee\right)\otimes(\oplus_{n\leq d} [n]^\vee)^{\otimes^2}.
\]
Then, assuming a basis for dense set of functionals on
$L^2(\mathbb{R}^+ \times I_3)$ given by
$\{e_n\}_{n = 0}^{\infty}$,
any basis element of the space of functionals above may be approximated as well
as we need using a functional in
is
$(\bigoplus_{n \leq K} [n] \otimes [n]^\vee) \otimes \mathrm{Span} (e_i \mid i \leq K)$ for some $K$.
\end{lma}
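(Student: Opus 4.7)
The plan is to combine the bundle trivialization established in proposition-definition \ref{prd:d_s2cubed} and notation \ref{n:phi_tau_iota} with the $L^2$-fiber identification of corollary \ref{C:l2fiber} and remark \ref{R:l2fiber}. As a first step, I would note that the diagonal subset $\gD_{\BR^3\times(\BS^2)^2}$ has Lebesgue measure zero in the ambient space, so the restriction map
\[
L^2(\BR^3\times \BS^2\times \BS^2) \to L^2\bigl((\BR^3\times \BS^2\times \BS^2) \sm \gD_{\BR^3\times(\BS^2)^2}\bigr)
\]
is an isometric isomorphism. The map $\psi$ from notation \ref{n:phi_tau_iota} is a diffeomorphism $SO(3)\times \BR^+\times (I_3\sm D_1) \cong (\BR^3\times \BS^2\times \BS^2) \sm \gD_{\BR^3\times(\BS^2)^2}$, and pullback along $\psi$ (multiplied by the square root of the Jacobian determinant) induces the desired isometry of Hilbert-space completions of tensor products. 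Note that the image of $\iota_4$ exhausts $\BR^+\times I_3$ up to the measure-zero set $\BR^+\times D_1$, so nothing is lost on the target side.

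For the representation-theoretic isomorphism of finite-dimensional spaces, I would iterate the recursion $V_{n+2}=r^2V_n\oplus r^{n+2}[n+2]$ from the earlier proposition to express each $V_n$ as a direct sum of summands of the form $r^{n}[k]$ with $k\leq n$ and $n-k$ even; summing over $n\leq d$ and reindexing yields the bracketed expression on the right-hand side. Both sides of the claimed finite-dimensional isomorphism are then the same $SO(3)$-representation with the same multiplicities, so the isomorphism is induced (up to a choice of normalization) by matching irreducible isotypic components. The two remaining tensor factors $(\oplus_{n\leq d}[n]^\vee)^{\otimes 2}$ are untouched on both sides.

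For the approximation claim, I would invoke Peter--Weyl, which says that the Wigner $D$-matrices $\{D^j_{mk}\}$ form an orthogonal basis of $L^2(SO(3))$ with explicit norms given in the proposition-definition of Wigner $D$-matrices. Together with the assumed basis $\{e_n\}$ of $L^2(\BR^+\times I_3)$, this yields an orthogonal basis of the Hilbert tensor product on the right-hand side. Any prescribed basis element on the left-hand side---the pullback under $\psi$ of a homogeneous polynomial times two spherical harmonics---lies in $L^2$, hence has a convergent expansion in this basis. Given $\epsilon>0$, truncating at a sufficiently large degree $K$ produces an approximant in $(\bigoplus_{n\leq K}[n]\otimes[n]^\vee)\otimes\mathrm{Span}(e_i\mid i\leq K)$ within $\epsilon$ of the target in $L^2$-norm.

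The main obstacle is making this approximation \emph{effective}, i.e.\ producing a quantitative bound on $K$ in terms of $\epsilon$ and the degrees of the input basis element. Because $\psi$ involves square roots and the determinant $\det(p_0,p_1,p_2)$, a spherical harmonic of degree $n$ on the left pulls back to a function on $SO(3)\times \BR^+\times (I_3\sm D_1)$ that is smooth on the interior but has algebraic behavior near the boundary $D_1$. Controlling the decay of its Wigner--$D$ and $e_i$ coefficients therefore reduces to Sobolev-type estimates for the pullback, using the standard fact that high-frequency components of a function in a Sobolev space decay polynomially, with an exponent controlled by the derivative orders of $\psi^{\pm 1}$ on the relevant compact subdomain. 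Since all the operations involved (restriction, pullback, orthogonal expansion, truncation) are bounded linear maps, continuity in the input coefficients---and thus the numerical stability needed for the main theorem---is automatic.
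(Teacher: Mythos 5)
Your structural skeleton matches the paper's: restrict away from the measure-zero degeneracy locus, transport along the trivialization $\psi$, use the recursion $V_{n+2}=r^2V_n\oplus r^{n+2}[n+2]$ for the radial factor, and expand in the orthogonal basis $D^j_{mk}\otimes e_n$ coming from Peter--Weyl. Where you diverge is in the step that actually carries the word ``effective,'' and there your argument has a gap. The paper's proof rests on two concrete identities: the Hopf-fibration pullback $D^j_{m0}(\ga,\gb,\gc)=\sqrt{4/(2j+1)}\,Y^*_{jm}(\gb,\ga)$, which converts each spherical harmonic factor into a Wigner $D$-function evaluated at the fiber coordinate, and the Kronecker product formula, which decomposes the resulting product of three $D$-functions at the same group element into a \emph{finite} sum of single $D^J_{MK}$'s with Clebsch--Gordan coefficients. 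The consequence is that the $SO(3)$-part of the expansion of a degree-$\le d$ basis element terminates exactly (only $l\lesssim 3d$ contribute), so the change of basis reduces to explicitly computable integrals $\int_{I_3\sm D_1}\int_{SO(3)} e_n D^l_{k'k}\bigl(h^*(D^j_{m0}D^{j'}_{m'0}D^{j''}_{m''0})\bigr)^\dagger$ over the base alone. You instead treat both tensor factors as requiring genuine truncation and appeal to Sobolev-type decay of the coefficients.

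That substitute is both weaker and not clearly valid as stated. The pullback under $\psi$ of a smooth function on $\BR^3\times(\BS^2)^2$ involves the section $s$, which degenerates at $D_1$; you acknowledge the ``algebraic behavior near the boundary'' but then invoke Sobolev estimates ``on the relevant compact subdomain,'' while the inner products defining the coefficients are taken over all of $I_3$, including a neighborhood of $D_1$. Without controlling the derivatives of $s$ there (or excising a neighborhood of $D_1$ and quantifying the resulting error), you do not get the polynomial decay you claim, and hence no bound on $K$. More importantly, even granting convergence, your argument never produces the coefficients themselves, which is what the reconstruction algorithm in Section \ref{S:functional_analysis} consumes. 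To close the gap you should replace the Sobolev step by the observation that the $SO(3)$-direction expansion is finite and given in closed form by Clebsch--Gordan coefficients, leaving only the one-dimensional family of integrals over $I_3\sm D_1$ against the $e_n$ to approximate.
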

\begin{proof}
  We want to get an effective form of the pullback to $L^2$ of the $SO(3)$
  fiber bundles embedding
  \[
  h:(I_3\sm D_1)\times SO(3)\to (\BS^2)^3.
  \]
  Note that given a basis $e_i$ for $L^2(I_3)$, one gets a concrete
  representation
  for $L^2(I_3\times SO(3))$ by tensoring with $L^2(SO(3))$ -- which
  is effectively given as the span of the $D^j_{m'm}$. On the other hand,
  one gets another basis for the same space --- as an $SO(3)$ bundle --- by
  tensoring the Hopf pullbacks of three $Y_{jm}s$.
  In order to decompose one basis in terms of the other, one has
  to compute all the integrals of the form
  \[
  \int_{I_3\sm D_1}\int_{SO(3)} e_n D^l_{k'k}(h^*(D^j_{m0}D^{j'}_{m'0}D^{j''}_{m''0}))^\dagger.
  \]
  One decomposes the term ``inside'' $h^*$ using the Kronecker
  product formula, and as the map is an $SO(3)$ fiber map.
\end{proof}
\begin{rmr}
  The integrals in the proof above have two useful properties:
  \begin{itemize}
  \item They are nontrivial only if the term inside the $h^*$ is
    $D^l_{k'k}$ times some other factor, on which $SO(3)$ acts trivially.
  \item Fixing $n, l$, varying $k,k'$, and assuming the
    $SO(3)$ invariant part ``inside'' $h^*$ is the same (by taking sums of
    such expressions), the value of the integral is the same.
  \end{itemize}
  While it is not immediately clear how to utilize the second property
  computationally (since one has to ``repack'' sums inside $h^*$), the
  computational benefit of
  the first property is clear: We are computing
  the integrals of a six dimensional family of basis elements in terms
  of another such family, but for each element only a three dimensional
  family is non trivial. Nevertheless, we inquire whether a more effective
  convolution algorithm can be found. See question \ref{q:better packing of the data}.

  Finally let us touch the point of constructing a basis to $L^2(I_3\sm D_1)$.
  The easiest way to do this is to diagonalize (in linear-algebraic sense)
  the $SO(3)$ invariant functions
  on $(\BS^2)^3$ (i.e.\ the copies of the $0$th representation in
  all the possible tensors of three different $Y_{nm}$). Yet,
  it seems likely that finding a basis in a more effective way
  might improve the algorithm.
\end{rmr}
Recall that $\psi \colon W_7 \setminus \Delta_{W_7} \to SO(3) \times \BR^+ \times I_3$ is a trivialization of the $SO(3)$-bundle $W_7 \setminus \Delta_{W_7}$. By remark \ref{R:l2fiber}, we can use it in order to convolve and obtain an $SO(3)$-invariant tempered distribution.

Let $\iota_{SO(3)}(f) = f *_{SO(3)} f$, where $f$ is a tempered distribution over $W_7 \setminus \Delta_{W_7}$, with $\psi$ acting as a trivialization map.
\begin{cor}
  Assuming that $M$ satisfies $\star\star$, the coefficients of
  $\delta_{\ti{M}_{2\times 4}}$ are
given by $\cup F^{\mu}_{d,d'}$. Moreover, the distribution is continuous in
the coefficients.
\end{cor}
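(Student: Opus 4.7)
The plan is to identify the representation-theoretic object $F^{\delta_M}_{d,d'}$ with the truncated Fourier/moment coefficients of the two-step geometric convolution $\iota_{SO(3)}\bigl(\iota_{\BR^3}(\delta_M)\bigr)$, and then apply the structural results of Sections \ref{s:translations} and \ref{S:rotation_inv} to identify the latter distribution with $\delta_{\ti{M}_{2\times 4}}$. The identification will be made through the basis change provided by the preceding lemma, which supplies an effective isomorphism between the product basis on $L^2(\BR^3)\hat\otimes L^2(\BS^2)\hat\otimes L^2(\BS^2)$ and the basis on $L^2(SO(3))\hat\otimes L^2(\BR^+\times I_3)$ induced by Wigner $D$-matrices and the Hopf pullback.

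For the first convolution, I would invoke the remark immediately preceding the corollary: since $M$ satisfies $\star$ (which is implied by $\star\star$), Proposition \ref{p:m2} shows that the self-intersection and degeneracy loci of the map $M\times M\to M_{2\times 2}$ are relatively null, whence $\iota_{\BR^3}(\delta_M)=\delta_{M_{2\times 2}}$ as tempered distributions on $\RSS$. By Theorem \ref{t:con_moments} together with the remark following Definition \ref{d:fF}, the coefficients of $\iota_{\BR^3}(\delta_M)$ expressed in the basis $r^a Y^\BR_{bc}\otimes Y^\BR_{nm}\otimes Y^\BR_{n'm'}$ are precisely the quadratic polynomials $f^{\delta_M}_{d,n,m,n',m'}$ in the moments of the $\rho_{n,m}(\delta_M)$.

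For the second convolution, I would transport $\delta_{M_{2\times 2}}$ through the trivialization $\psi$ of Notation \ref{n:phi_tau_iota}, self-convolve along the $SO(3)$ fibre, and project onto the trivial $SO(3)$-subrepresentation. Corollary \ref{C:l2fiber} and Remark \ref{R:l2fiber} guarantee that this projection lives naturally in $L^2\bigl((M_{2\times 2}\times M_{2\times 2})/SO(3)\bigr)$, and unpacking the basis change yields exactly the image of $f^{\delta_M}_{d,\bullet}\otimes f^{\delta_M}_{d,\bullet}$ in the trivial subrepresentation, i.e.\ $F^{\delta_M}_{d,d'}$. Proposition \ref{p:m4}, applied to $M$ satisfying $\star\star$, shows that over a generic fibre the support is a finite union of rotated copies of $M_{2\times 2}$ intersecting only on a relatively null set, so the convolved distribution equals $\delta_{\ti{M}_{2\times 4}}$. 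Continuity in the coefficients then follows formally: each $F^{\delta_M}_{d,d'}$ is a fixed polynomial of degree four in the moments of the $\rho_{n,m}(\delta_M)$, and the passage from coefficients back to a positive Radon measure is continuous by Riesz--Markov--Kakutani, as recalled in Discussion \ref{d:rep_theory_interpretation}.

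The main technical obstacle will be the bookkeeping around the trivialization and the excised loci. One must check that the trivialization $\psi$ fixed in Notation \ref{n:phi_tau_iota} is the same one implicitly used when identifying $(L^2(E)\otimes L^2(E))^{G}$ with $L^2((E\times E)/G)$ in Remark \ref{R:l2fiber}, so that the two basis decompositions line up on the nose rather than merely up to a unitary; otherwise the coefficients would only match up to a re-labelling that would still need to be tracked. In parallel, the diagonal sets $\gD_{\RSS}$ and $D_1$ removed from the bundle must be verified to be relatively null inside both $M_{2\times 2}$ and $\ti{M}_{2\times 4}$, for which the $\mathcal{I}$-codimension bounds of Propositions \ref{p:m2} and \ref{p:m4} are exactly what is required.
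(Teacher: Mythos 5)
Your proposal is correct and follows essentially the same route as the paper's proof: identify the support of $\iota_{SO(3)}(\delta_{M_{2\times 2}})$ with $\ti{M}_{2\times 4}$ via the trivialization $\psi$, use Proposition \ref{p:m4} to dispose of the double and degenerate loci as sets of positive co-dimension, and obtain continuity from the Riesz--Markov--Kakutani argument of \ref{d:rep_theory_interpretation}. The paper's proof is considerably terser (the first convolution step and the basis-change bookkeeping you spell out are delegated to the remark preceding the corollary and to the surrounding lemmas), but the substance is identical.
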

\begin{proof}
Note that the support of $\iota_{SO(3)}(\delta_{M_{2\times 2}})$
is $\ti{M}_{2 \times 4}$, since we defined $\iota_{SO(3)}$ using $\psi$.
By proposition \ref{p:m4}, the locus of double points
and degenerated points is of positive co-dimension. The continuity in
the coefficients follows from the continuity in the coefficients in
Riesz–Markov–Kakutani representation theorem as mentioned in
\ref{d:rep_theory_interpretation}.
\end{proof}
%
\section{Functional analytic aspects and  proof of the main theorem}\label{S:functional_analysis}
%
From the two previous sections we know that assuming $M$ satisfies
properties $\star,\star\star$, it may be recovered from
$\ti{M}_{2\times 4}$, and that the delta function of $\ti{M}_{2\times 4}$ may be
stably computed from its Fourier coefficients.
The object of this section is to wrap up the proof of the main theorem
by doing three things: First we will show that compact ``differentiable enough''
surfaces in $\BR^3$, away from a comeagre subset, satisfy properties
$\star,\star\star$. This is done by using Morse theory to prove the statements
for ``most'' polynomials, and concluding the claim by a density argument. This
tasks occupies most of this section.
Next, we show that the fibers from proposition \ref{p:m4} can be recovered from
the coefficient $F_{\gd_M,d,d'}$ where $d,d'<K$, up to accuracy
determined by $K$ and is continuous at the coefficients
finally we have to show that in this approximate fiber recovery we are able
to identify the singular points of the fibers, which are exactly where shifted
copies of $M$ intersect.
\begin{ntt}
  Denote by $\Pol_3(d)$ the space of degree $d$ polynomials in $\BR^3$.
  Using the metric of distance on the coefficients, $\Pol_3(d)$ is a complete
  metric space.
\end{ntt}
\begin{prd}\label{p:morse}
  Fix $R > 0$ positive. We denote by $B(R)$ the closed ball of radius $R$
  about the origin.

  Let $f\in\Pol_3(d)$ be a polynomial, such that the intersection of its null set with $B(R)$ (denoted by $Z_f$), is a smooth surface.
  Then there is an open neighborhood $f \in U_f\subset\Pol_3(d)$
  such that for all $\ti{f}$ in $U_f$,
  $Z_{\ti{f}}$ is also a smooth surface.
\end{prd}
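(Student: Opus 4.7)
The plan is to show that the two defining conditions for $Z_f := \{x \in B(R) \mid f(x) = 0\}$ to be a smooth surface (with boundary) --- namely (i) $\nabla f(x)\neq 0$ at every $x \in B(R)$ with $f(x)=0$, and (ii) at each $x\in \partial B(R)$ with $f(x)=0$, $\nabla f(x)$ is not parallel to the radial vector $x/\|x\|$ --- are both open conditions in the coefficient norm on $\Pol_3(d)$. Condition (i) is exactly the hypothesis of the implicit function theorem, and together with (ii) it gives the collar structure near the boundary sphere, so jointly they characterize the smooth-surface hypothesis.

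First I would observe that the evaluation map
\[
\Pol_3(d) \longrightarrow C^1(B(R),\BR), \qquad \tilde f \mapsto \tilde f\big|_{B(R)},
\]
is a bounded linear map of normed spaces, because on the finite-dimensional space $\Pol_3(d)$ all norms are equivalent and $B(R)$ is compact. Hence coefficient-closeness of $\tilde f$ to $f$ yields uniform closeness of both $\tilde f$ and $\nabla \tilde f$ to $f$ and $\nabla f$ on $B(R)$.

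Next I would exploit the compactness of $Z_f$ to upgrade conditions (i)--(ii) to uniform bounds. By (i) and continuity of $\nabla f$, there exist $\delta>0$ and an open neighborhood $N \subset B(R)$ of $Z_f$ on which $\|\nabla f\| \geq 2\delta$; and on the complement $B(R)\smallsetminus N$ the continuous function $|f|$ attains a positive minimum $\eta>0$. Similarly, by (ii) and compactness of $Z_f \cap \partial B(R)$, the angle between $\nabla f(x)$ and $x/\|x\|$ is bounded away from $0$ and $\pi$ on a neighborhood $N' \subset B(R)$ of that boundary-trace, and on the complementary part of $\partial B(R)$ we again have $|f|\geq \eta'>0$.

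The final step is a perturbative comparison. Let $U_f\subset\Pol_3(d)$ be chosen so that for all $\tilde f \in U_f$ we have $\|\tilde f - f\|_\infty < \min(\eta,\eta')/2$ and $\|\nabla\tilde f - \nabla f\|_\infty < \delta$ on $B(R)$, together with an analogous smallness controlling the angular deviation inside $N'$. Then any zero of $\tilde f$ in $B(R)$ must lie in $N$ (else $|\tilde f|\geq |f|-\|\tilde f -f\|_\infty>0$), where $\|\nabla\tilde f\|\geq \delta$, giving (i) for $\tilde f$; and any zero of $\tilde f$ on $\partial B(R)$ lies in $N'$, where the radial-transversality (ii) likewise survives. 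I expect the main, but essentially routine, obstacle to be this last bookkeeping for the boundary condition (ii): one must simultaneously track the location of $Z_{\tilde f}\cap \partial B(R)$ and the direction of $\nabla \tilde f$ there, which is handled by the same two-part split between the neighborhoods $N,N'$ and their complements.
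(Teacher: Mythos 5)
Your proof is correct, and it reaches the conclusion by a more direct route than the paper. The paper fixes a filtration $\{0\}=A_0\subset A_1\subset\cdots\subset\Pol_3(d)$ and inducts on its dimension, at each step enlarging a closed neighborhood $C_{f,n}\subset \{f\}+A_n$ by a small interval in one new direction $g$, using compactness of $C_{f,n}\times B(R)$ to keep the gradient nonvanishing on the zero locus (the reference to ``Morse theory'' is really just this regular-value persistence). You collapse the whole induction into a single uniform estimate: finite-dimensionality of $\Pol_3(d)$ makes $\tilde f\mapsto \tilde f|_{B(R)}$ bounded into $C^1(B(R))$, and one compactness argument produces the constants $\delta,\eta$ that confine $Z_{\tilde f}$ to the region where $\|\nabla \tilde f\|$ is bounded below. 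The two arguments rest on the same mechanism (compactness plus a gradient lower bound near $Z_f$), but yours is shorter and, importantly, more complete on one point: since $B(R)$ is a closed ball, $Z_f$ is generically a surface with boundary, and smoothness requires transversality of $\{f=0\}$ to $\partial B(R)$; you state and propagate this condition (ii) explicitly, whereas the paper's proof does not address the boundary at all. The only bookkeeping worth spelling out in your last step is that controlling the \emph{angle} of $\nabla\tilde f$ against the radial direction uses both the uniform closeness $\|\nabla\tilde f-\nabla f\|_\infty<\delta$ and the lower bound $\|\nabla\tilde f\|\ge\delta$ on $N\cap N'$, but that is routine as you say.
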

\begin{proof}
  Define
  \[
  \begin{aligned}
    F:\Pol_3(d)\times\BR^3&\to\BR\\
    (g,p)&\mapsto g(p),
  \end{aligned}
  \]
  and pick a filtration $\{0\}=A_0\subset A_1\subset\cdots\Pol_3(d)$. Where
  $\dim A_n=n$. We induct on $n$, where at each
  step of the
  induction we find a closed neighborhood $C_{f,n}$ of $\{f\}$ in $\{f\}+A_n$ so that
  the null set of $F\restriction C_{f,0}\times B(R)$
  is diffeomorphic to $Z_f \times [-1, 1]^n$.
  For $n = 0$, by the hypothesis of the lemma, $f$ is a smooth
 function whose null set is a smooth surface.
 The induction step follows from Morse theory. At each
 step of the induction we need to pick the closed neighborhood so that
 $F\restriction C_{f,n}\times\BR^3$ misses the singular locus of $F$ (which is of
 positive co-dimension).

 In more detail, let $g \in A_{n+1} \sm A_{n}$.
 For each point $\tilde{f} \in C_{f,n}$ and $y \in B(R)$
 such that $\tilde{f}(y) = 0$, there is some $\epsilon > 0$
 such that the gradient of $\tilde{f} + \delta \cdot g$ at $y$ is
 non-zero for all $|\delta| < \epsilon$. Using compactness,
 we can find a single $\epsilon$ that works for all
 $\tilde{f} \in C_{f,n}$ and $y \in B(R)$.
 Let $C_{f, n + 1} = C_{f,n} + [-\frac{1}{2}\epsilon, \frac{1}{2}\epsilon] \cdot g$.
\end{proof}
\begin{lma}\label{L:deg5surjection}
  For any integer $d\geq 5$, and four points $p_0,\ldots, p_3\in\BR^3$ in general position,
  the linear map
  \[
  \begin{aligned}
    \Pol_3(d)&\to\oplus_{i=0}^3 \Pol_3(2)\\
    f(v)&\mapsto\text{the degree $2$ part at $0$ of }f(v-p_i)
  \end{aligned}
  \]
  is surjective.
\end{lma}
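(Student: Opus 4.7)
The plan is first to reduce to the case $d=5$: since $\Pol_3(5)\subseteq\Pol_3(d)$ for $d\geq 5$ and the map for general $d$ restricts to the map for $d=5$, surjectivity in degree $5$ implies surjectivity in all higher degrees. Next, surjectivity onto a direct sum $\bigoplus_{i=0}^3\Pol_3(2)$ is equivalent to hitting each coordinate independently, i.e.\ producing for each $i$ and each target $q\in\Pol_3(2)$ some $f\in\Pol_3(5)$ with prescribed $2$-jet $q$ at $p_i$ and vanishing $2$-jets at the other three points. By the symmetric roles of $p_0,\ldots,p_3$, it is enough to handle $i=0$.

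The construction I would use is built from the plane through $p_1,p_2,p_3$. General position makes this plane unique and ensures it avoids $p_0$; let $\pi$ denote its defining affine linear form, so $\pi(p_j)=0$ for $j=1,2,3$ and $\pi(p_0)\neq 0$. Then $\pi^3$ lies in $\mathfrak{m}_{p_j}^3$ for each $j\in\{1,2,3\}$, i.e.\ its $2$-jet at each such $p_j$ vanishes. For any $g\in\Pol_3(2)$, the product $g\cdot\pi^3\in\Pol_3(5)$ inherits this vanishing. Thus all candidates of the form $g\cdot\pi^3$ with $g\in\Pol_3(2)$ lie in the kernel of the three ``other'' projections.

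It remains to show that as $g$ varies, the $2$-jet of $g\cdot\pi^3$ at $p_0$ sweeps out all of $\Pol_3(2)$. The operation ``take $2$-jet at $p_0$'' is multiplicative in the local jet ring $\BR[v]/\mathfrak{m}_{p_0}^3$, and $\Pol_3(2)$ is canonically identified with this ring (a polynomial of degree at most $2$ is determined by and equals its $2$-jet at $p_0$, and both sides have dimension $10$). Under this identification the map $g\mapsto J^2_{p_0}(g\cdot\pi^3)$ is simply multiplication by the class of $\pi^3$. Since $\pi(p_0)\neq 0$, that class has nonzero constant term and is therefore a unit in the local algebra, so multiplication by it is an automorphism of $\Pol_3(2)$. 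In particular the map is surjective, finishing the case $i=0$; the other cases follow by permuting the roles of the $p_i$'s.

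The only real subtlety is the choice of the cubed plane: it is what gives the required order-$3$ vanishing while spending only $3$ of the available $5$ degrees, leaving the remaining $2$ degrees of freedom in $g$ to realize the arbitrary prescribed $2$-jet at $p_0$. The lower bound $d\geq 5$ is exactly what makes this balancing work, and one should expect the argument to fail at $d=4$, which matches the hypothesis of the lemma.
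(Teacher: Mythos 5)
Your proof is correct and is essentially the paper's argument: after normalizing $p_0,\ldots,p_3$ to the origin and the standard basis vectors, your form $\pi_i$ for $i=1,2,3$ is exactly the coordinate $x_i$, and the paper likewise works with the subspaces $x_i^3\cdot\Pol_3(2)$, whose elements have vanishing $2$-jets at the three points on the plane $x_i=0$ and sweep out all $2$-jets at $p_i$. The only (minor) difference is that you treat $p_0$ symmetrically via the cube of the plane through $p_1,p_2,p_3$, yielding a fully diagonal system, whereas the paper takes $W_0=\Pol_3(2)$ and solves a triangular system; your unit-in-the-jet-ring justification of the diagonal surjectivity is a detail the paper merely asserts.
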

\begin{proof}
  Since the map is equivariant under the action the affine linear group,
  without loss of generality we may assume that
  \[
  p_0=(0,0,0),\quad p_1=(1, 0, 0),\quad p_2=(0, 1, 0),\quad p_3=(0, 0, 1).
  \]
  We now consider the following four subspaces of $Pol_3(d)$:
  \[
  W_0:=\Pol_3(2),\quad W_i:=x_i^3\cdot\Pol_3(2)\text{ for }i=1,2,3.
  \]
  The the following properties hold:
  \begin{itemize}
    \item The intersection of these spaces is the zero polynomial.
    \item For each $i$, the map $W_i\to \Pol_3(2)$ taking $f(v)$ to the degree
      $2$ part at $0$ of $f(v-p_i)$ is surjective.
    \item For each $i>0$, $j\neq i$, the map taking $f(v)$ to the degree part
      $2$ at $0$ of $f(v-p_j)$ is identically zero.
  \end{itemize}
  The claim follows.
\end{proof}
\begin{lma}\label{l:pol_approx}
  Let $f\in\Pol_3(d)$ be a generic polynomial such that
  the dimension of the null set of $f$ is two dimensional. Then, assuming
  $d \geq 5$, the null set of $f$ satisfies property $\star$.
\end{lma}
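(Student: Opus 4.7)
The plan is to show that the set of $f \in \Pol_3(d)$ for which $M = Z_f$ fails property $\star$ is meagre in $\Pol_3(d)$, via a parametric transversality argument powered by lemma \ref{L:deg5surjection}. Together with proposition \ref{p:morse}, this yields the genericity statement. Property $\star$ can fail at $p \in X_2$ only through one of two 2-jet conditions on $f$: $\det S_m = 0$ at $m = \pi_{X_2/M}(p)$ (violating requirement (1)), or $\det(S_m - S_{m'}) = 0$ at $m, m' = \rho_{X_2/M}(p)$ (violating requirement (2), since $g_*$ is a translation and the composition in the definition reduces to $x \mapsto (S_{m'}-S_m)(x)$).

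By lemma \ref{L:prop_star1_is_regualrity} and lemma \ref{L:gauss_map_finite}, once requirement (1) holds off a low-dimensional subset of $X_2$, the natural map $X_2 \to M \times M$ induced by $(\pi_{X_2/M}, \rho_{X_2/M})$ has discrete fibers. Hence it suffices to exhibit subsets $A_1 \subseteq M$ and $A_2 \subseteq M \times M$, each of dimension at most one, containing the points where requirements (1) and (2) respectively fail; their preimages assemble into an $X_1'$ of $\mathcal{I}$-dimension at most one.

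I would then introduce the universal incidence varieties
\begin{gather*}
\mathcal{B}_1 = \{(f, m) \in \Pol_3(d) \times \BR^3 : f(m) = 0,\ \nabla f(m) \neq 0,\ \det S_m = 0\}, \\
\mathcal{B}_2 = \{(f, m, m') : m \neq m',\ f(m) = f(m') = 0,\ \nabla f(m) \parallel \nabla f(m') \neq 0,\ \det(S_m - S_{m'}) = 0\},
\end{gather*}
and argue using lemma \ref{L:deg5surjection} that they are smooth submanifolds of codimensions $2$ and $5$ in their respective ambient spaces. The lemma guarantees that for $d \geq 5$ we can independently prescribe the 2-jet of $f$ at any two chosen points, which is precisely the parametric transversality needed to cut out $\mathcal{B}_1, \mathcal{B}_2$ with the expected codimensions. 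Applying Sard's theorem to the projections $\mathcal{B}_i \to \Pol_3(d)$ over a countable compact exhaustion of $\BR^3$ (respectively $\BR^3 \times \BR^3$), the set of $f$ whose fibers have excess dimension is meagre. For $f$ outside this meagre set, $A_1(f)$ has dimension at most $3 - 2 = 1$ and $A_2(f)$ has dimension at most $6 - 5 = 1$, as required.

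The main obstacle is the rigorous verification of parametric transversality, particularly for the pair condition $\mathcal{B}_2$, which involves the 2-jets at two distinct points simultaneously and a rank-drop determinant that must be checked to cut a smooth stratum. This is exactly where the hypothesis $d \geq 5$ enters via lemma \ref{L:deg5surjection}, which in fact provides four-point independence, more than needed here but anticipating the stronger $\star\star$ setting treated in the next results.
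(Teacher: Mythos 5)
Your overall strategy coincides with the paper's: universal incidence varieties over $\Pol_3(d)$, codimension counts supplied by Lemma \ref{L:deg5surjection} (independence of the $2$-jets at distinct points), a Sard-type argument for the projections to $\Pol_3(d)$, and Proposition \ref{p:morse} to stabilize the null set under perturbation. Your $\mathcal{B}_2$ and its count ($5$ conditions in the $6$-dimensional $(m,m')$-directions, hence $1$-dimensional fibers over generic $f$) match the paper's variety $A'$, which realizes $X_1'$ directly as the generic fiber of a single incidence variety over the parametrization $\BR^3\times\BS^2\times\BR^3$ of $X_2$.

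The gap is in your treatment of requirement (1). What must be bounded is not $A_1=\{m\in M:\det S_m=0\}$ but the locus in $X_2$ where requirement (1) fails, namely all pairs $(m,m')$ with $N_m=N_{m'}$ and $m\in A_1$ or $m'\in A_1$; knowing $\dim A_1\le 1$ does not bound this locus, because for $m\in A_1$ the partner set $\{m':N_{m'}=N_m\}$ is a Gauss fiber over $N(A_1)$. Lemma \ref{L:gauss_map_finite} is of no use here: its hypothesis excludes exactly the normals in the projection of $X_1'$, which is where $N(A_1)$ lies; and Lemma \ref{L:prop_star1_is_regualrity} gives injectivity of $(\pi_{X_2/M})_*$ only where requirement (1) already holds. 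So the sentence ``their preimages assemble into an $X_1'$ of $\mathcal{I}$-dimension at most one'' is circular as written, and a priori this preimage could be $2$-dimensional. The repair is to phrase the requirement-(1) failure as a two-point condition as well, e.g.\ replace $\mathcal{B}_1$ by $\{(f,m,m'):f(m)=f(m')=0,\ \nabla f(m)\parallel\nabla f(m')\ne 0,\ \det S_m\cdot\det S_{m'}=0\}$, which by the same two-point jet independence has codimension $5$ in $\Pol_3(d)\times\BR^3\times\BR^3$ and hence $1$-dimensional generic fibers. This is exactly what the paper's $A'$ does by listing $\rank S_p<2$, $\rank S_{p+g}<2$ and $\rank(S_p-S_{p+g})<2$ as alternative defining conditions of one incidence variety sitting over the full parametrization of $X_2$. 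Finally, as in the paper, you should record why the determinantal conditions are not identically satisfied on the incidence variety (so that the extra codimension is genuinely $\ge 1$); the paper derives this from the non-constancy of the second derivatives of a generic $f$ of degree $>2$.
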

\begin{proof}
  Consider the space $\Pol_3(d)\times\BR^3\times \BS^2\times \BR^3$.
  For each $(p,n,g)\in\BR^3\times \BS^2 \times(\BR^3\sm\{0\})$, we compute the
  dimension
  of the varieties:
  \[
  \begin{aligned}
  A_{p,n,g}:=&\{f\in\Pol_3(d)|f(p)=f(p+g)=0, \nabla f(p),\nabla f(p+g)||n\},\\
  A'_{p,n,g}:=&\{ f\in A_{p, n, g}|\nabla f(p)=0\text{ or }\nabla f(p+g)=0 \text{ or}\\
  &\min(\rank S_p, \rank S_{p+g}, \rank (S_p-S_{p+g})) < 2\}.
  \end{aligned}
  \]
  By lemma \ref{L:deg5surjection}, the co-dimension of $A_{p, n, g}$ in $\Pol_3(d)$
  is $2\cdot(1 + 2)=6$, and the co-dimension of $A'_{p,n,g}$ in
  $A_{p,n,g}$ is at least $1$.\footnote{the ``at least'' here is due to $\BR$ not being
  algebraically closed - if we were to extend scalars to $\BC$, this would
  simply be $1$}
  Hence, denoting
  \[
  \begin{aligned}
    A:=&\{(f, p, n, g)\in\Pol_3(d)\times\BR^3\times \BS^2\times \BR^3\mid \\
       & f(p)=f(p+g)=0, \nabla f(p),\nabla f(p+g)\parallel n\},\\
    A':=&\{(f,p,n,g)\in A \mid \nabla f(p)=0\text{ or }\nabla f(p+g)=0 \text{ or}\\ &\min(\rank S_p, \rank S_{p+g}, \rank (S_p-S_{p+g})) < 2\},
  \end{aligned}
  \]
  we see that since $d > 1$ the co-dimension of $A$ in
  $\Pol_3(d)\times\BR^3\times \BS^2\times \BR^3$ is
  $2\cdot(1 + 2)=6$, and since $d > 2$ the co-dimension of $A'$ in
  $A$ is at least $1$.

  We now claim that the last co-dimension is equal to $1$ and not simply bounded
  by it: since the polynomial $f$ is of rank $>2$, the second derivatives
  of $f$ are non-constant everywhere. Hence, by the implicit function theorem,
  the second derivatives of the surface defined by
  $f$ are nowhere constant (they are the inverse of the matrix of polynomials).
  Thus,
  \[\det S_p \cdot \det S_{p + g} \cdot \det (S_{p+g} - S_p)\]
  is a (high degree) non-constant polynomial.

  Finally, given $f\in\Pol_3(d)$ such that
  the null set of $f$ in $\BR^3$ is a surface, take an open neighborhood
  $U_f\subset\Pol_3(d)$ as in proposition-definition \ref{p:morse}.
  Then the same co-dimension properties will
  hold for a fiber in $\BR^3\times \BS^2\times \BR^3$ over a generic element in
  $U_f$. However, these co-dimension properties are precisely $\star$.
\end{proof}
\begin{lma}\label{l:pol_approx2}
  Let $f\in\Pol_3(d)$ be a generic polynomial such that
  the dimension of the null set of $f$ is $2$ dimensional. Then, assuming
  $d > 2$, the null set $f$ satisfies property $\star\star$.
\end{lma}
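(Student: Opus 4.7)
The plan is to extend the argument of lemma \ref{l:pol_approx} to the stronger condition $\star\star$ by setting up an incidence variety that records the data defining $X_4$, rather than $X_2$. Concretely, I would work with the parameter space
\[
\Pol_3(d)\times(\BR^3)^4\times (\BS^2)^2\times SO(3),
\]
and the sub-variety $A_{\star\star}$ of tuples $(f, m_1, m_2, m_1', m_2', n_1, n_2, g)$ satisfying $f(m_i)=f(m_i')=0$, $\nabla f(m_i)\parallel n_i$, $\nabla f(m_i')\parallel g^{-1}n_i$ for $i=1,2$, together with the translation/rotation relation $m_2-m_1=g(m_2'-m_1')$. Off the diagonals, these are exactly the equations cutting out $X_4$ of the null set of $f$. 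Inside $A_{\star\star}$ I would define the ``bad'' locus $A'_{\star\star}$ as the union of the failure sets of the two requirements of $\star\star$: either $(\iota_4\restriction M_{2\times 2})_*$ fails to have rank $4$, or one of the two shape-operator differences has $\rank(g_*S_{m_i'}-S_{m_i})<2$.

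The central computation is the codimension of $A'_{\star\star}$ inside $A_{\star\star}$. I would first prove a four-point strengthening of lemma \ref{L:deg5surjection}: for generic $p_1,\dots,p_4\in\BR^3$ and $d$ sufficiently large, the evaluation map $\Pol_3(d)\to\bigoplus_{i=1}^4\Pol_3(2)$ sending $f$ to its $2$-jets at the $p_i$ is surjective; the same $W_i$-style construction carries over, using products of high powers of linear forms vanishing at three of the four points. This independence lets me prescribe the Hessians $S_{m_i},S_{m_i'}$ separately, so each determinantal condition $\det(g_*S_{m_i'}-S_{m_i})=0$ and the analogous rank condition on $(\iota_4)_*$ becomes a non-trivial polynomial equation on the fibers of $A_{\star\star}\to\Pol_3(d)$. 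Adding up: a $15$-dimensional data space, the $2\cdot(1+2)=6$-codimensional incidence from the two gradient pairs, and the additional codimension at least $1$ cutting out $A'_{\star\star}$, yield that the generic fiber of $A'_{\star\star}\to\Pol_3(d)$ has dimension at most $3$. Projecting to $X_4$ then produces a subset $X'_3\subset X_4$ of $\mathcal{I}$-dimension $\leq 3$, precisely as required by $\star\star$.

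Finally, I would transfer this genericity from a single polynomial $f$ to an open and dense neighborhood using Proposition-Definition \ref{p:morse}: on the neighborhood $U_f\subset\Pol_3(d)$ the null set $Z_{\ti f}$ remains a smooth surface diffeomorphic to $Z_f$, and the codimension-$1$ failure loci above pull back to relatively meagre subsets of $U_f$, so a comeagre set of $\ti f$ inherits $\star\star$. Combined with lemma \ref{l:pol_approx} (so that $\star$ also holds generically) and a density argument sending a generic $C^3$-surface to a nearby null-set of a polynomial of sufficiently high degree, this gives the required genericity statement.

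The main obstacle is the independence check: one must verify that the determinantal conditions $\det(g_*S_{m_i'}-S_{m_i})=0$ are not automatically forced by the incidence constraints themselves, i.e.\ that the gradient alignments and the relation $m_2-m_1=g(m_2'-m_1')$ do not secretly force $g_*S_{m_i'}$ and $S_{m_i}$ to share an eigendirection on all of $A_{\star\star}$. As in the proof of lemma \ref{l:pol_approx}, this is handled by exhibiting a single polynomial for which all the relevant determinants are simultaneously non-vanishing, whence --- the conditions being polynomial in the coefficients of $f$ --- the failure locus has positive codimension (over $\BC$ exactly $1$; over $\BR$ at least $1$, which is all we need for the $\mathcal{I}$-dimension bound).
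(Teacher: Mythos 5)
Your proposal takes essentially the same route as the paper: an incidence variety in $\Pol_3(d)$ times the geometric data, a codimension count via surjectivity of the $2$-jet evaluation map, a positive-codimension argument for the determinantal and $\iota_4$-rank failure loci, and transfer to a neighborhood via Proposition-Definition \ref{p:morse}. The only remark worth making is that Lemma \ref{L:deg5surjection} is already stated for four points in general position, so the ``four-point strengthening'' you propose is exactly the existing lemma and requires no new work.
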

\begin{proof}
  The proof is analogous to the proof of lemma \ref{l:pol_approx}.
  Consider the space
  \[\Pol_3(d)\times(\BR^3)^4\times(\BS^2)^4\times(\BR^3)\times SO(3).
  \]
  For each
  \[
  (p_1,\ldots,p_4,n_1,\ldots,n_4,d_1,d_2,g)\in(\BR^3)^4\times(\BS^2)^4\times(\BR^3\sm\{0\})^2\times(SO(3)\sm\mathrm{Id}),
  \]
  so that $g(n_1)=n_3, g(n_2)=n_4, g(d_1)=d_2$ we compute the dimension of the
  varieties:
  \[
  \begin{aligned}
    A_{p_1,\ldots,p_4,n_1,\ldots,n_4,d_0,d_1,g}:=&
    \{f\in\Pol_3(d)|f(p_i)=0, \nabla f(p_i)=n_i,\\p_2-p_1=d_1, &p_4-p_3=d_2,
    \nabla f(p_1) ||n_1, \nabla f(p_2)||n_2\},\\
    A'_{p_1,\ldots,p_4,n_1,\ldots,n_4,d_1,d_2,g}:=&\{f\in A_{p_1,\ldots,p_4,n_1,\ldots,n_4,d_1,d_2,g}, \\
    \exists i:\nabla f(p_i)=0&, \text{ or }\\
    \min(\rank (g_* S_{p_1}-&S_{g(p_3)}),\rank (g_* S_{p_2}-S_{g(p_4)})) < 2, \text{ or }\\
     \min(\rank d \iota_4 \restriction & T_{(d_1, n_1, n_2)} M_{2 \times 2}(f), \\
    \rank d \iota_4 \restriction & T_{(d_2, n_3, n_4)} M_{2 \times 2}(f)) < 3 \}
  \end{aligned}
  \]
  Where $M_{2 \times 2}^f$ is the surface of all $(m_2 - m_1, n_1, n_2)$ such that $f(m_1) = f(m_2) = 0$ and $\nabla f(m_1) \parallel n_1, \nabla f(m_2) \parallel n_2$.

  In order to be able to work with $M_{2 \times 2}(F)$, let us note that for a
  neighborhood of $m_1, m_2$ in which the shape operators and their difference
  are both of full rank, we can locally parameterize $M_{2 \times 2}(F)$ by
  $(G^{-1}(n_2) - G^{-1}(n_1), n_1, n_2)$ where $G$ is the Gauss map restricted to
  a neighborhood around a point $m$ in which it is open.

  By the definition, we can compute the restriction of the linear map
  $d \iota_4$ to the tangent space of $M_{2 \times 2}(F)$ at each point, using the
  first two derivatives. The rest of the proof is completely identical to the
  proof of lemma \ref{l:pol_approx}.
\end{proof}
Recalling that we identify a surface $M \subseteq \mathbb{R}^3$ with
$\delta_M \in \mathcal{D}$, there are two points to consider when moving from
null sets of polynomials to general surfaces:
\begin{prp} If a continuous function with a compact null set is approximated, then so is the null set.
\end{prp}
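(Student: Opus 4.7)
The plan is to interpret ``approximated'' on both sides precisely: on the function side as $C^2$-convergence on a ball containing the zero set, and on the null-set side as weak-$*$ convergence in $W^{2,2}$ of the associated delta functions $\gd_{Z_{f_n}} \to \gd_{Z_f}$. The intermediate step is Hausdorff convergence of the sets $Z_{f_n}$ to $Z_f$, enhanced by $C^2$-convergence of their local graph parametrizations, since ultimately the paper's topology on surfaces respects normals as well as positions.

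First I would establish the one-sided Hausdorff containment $Z_{f_n} \subseteq U_\epsilon(Z_f)$ eventually, for every $\epsilon > 0$. Fix a ball $B$ containing $Z_f$ in its interior; the complement $B \setminus U_\epsilon(Z_f)$ is compact, and by continuity $|f|$ achieves a positive minimum $c$ there. Uniform convergence gives $|f_n| > c/2$ on that set eventually, forcing $Z_{f_n} \cap B \subseteq U_\epsilon(Z_f)$. For the reverse containment $Z_f \subseteq U_\epsilon(Z_{f_n})$, I would use that $\nabla f$ is nowhere zero on $Z_f$ (as guaranteed by the Morse-theoretic construction of Proposition \ref{p:morse}), so $f$ changes sign transversally across $Z_f$; near any $p \in Z_f$ we can pick nearby points $p^{\pm}$ with $f(p^{\pm}) = \pm \delta$, so $f_n(p^+) > 0 > f_n(p^-)$ eventually, and the intermediate value theorem along the segment from $p^-$ to $p^+$ produces a zero of $f_n$ near $p$. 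Compactness of $Z_f$ then upgrades this to uniform Hausdorff convergence.

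The main obstacle, and what motivates taking the hypothesis in $C^2$ rather than $C^0$, is lifting Hausdorff convergence of the sets to weak-$*$ convergence of the delta functions on $W^{2,2}$ test functions. The implicit function theorem, applied uniformly in $n$ near each $p \in Z_f$, shows that for large $n$ both $Z_f$ and $Z_{f_n}$ are graphs of $C^2$ functions over a common affine chart, and $C^2$-convergence of $f_n$ to $f$ propagates to $C^2$-convergence of these graphs, including their unit normals (which is essential, since $\gd_{Z_f}$ is defined on $\BR^3 \times \BS^2$ and so the Gauss map enters the pairing against test functions). A partition of unity subordinate to a finite cover of $Z_f$ by such charts, together with the standard surface area change-of-variables formula, then converts this local $C^2$-convergence into $\int u\, d\gd_{Z_{f_n}} \to \int u\, d\gd_{Z_f}$ for every $u \in W^{2,2}(\BR^3 \times \BS^2)$, which is precisely the required weak-$*$ convergence of delta functions.
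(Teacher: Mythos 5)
Your argument is correct, but there is nothing in the paper to compare it against: the proposition is stated without proof, as a standard fact feeding into Corollary \ref{c:co_meagre}, and the paper never even specifies in what sense either the function or the null set is ``approximated.'' Your interpretive choices ($C^2$-convergence on a ball for the functions, weak-$*$ convergence of the delta functions for the null sets) are the ones the surrounding argument actually needs, and the three steps you give --- the compactness/minimum argument for $Z_{f_n}\subseteq U_\epsilon(Z_f)$, the sign-change/IVT argument for the reverse inclusion, and the implicit-function-theorem upgrade from Hausdorff convergence to $C^2$ graph convergence and hence to convergence of the pairings $\int u\,d\gd_{Z_{f_n}}$ --- are all sound. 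One point worth making explicit, since you only gesture at it: as literally stated for a \emph{continuous} $f$, the proposition is false in the direction $Z_f\subseteq U_\epsilon(Z_{f_n})$ (take $f(x)=|x|^2$ and $f_n=f+1/n$, whose null set is empty), so the nonvanishing of $\nabla f$ on $Z_f$ that you import from Proposition \ref{p:morse} is not an optional convenience but a necessary hypothesis; in the paper's application the functions are polynomials whose null sets are regular level sets, so this is harmless, but a careful write-up should restate the proposition with that hypothesis included.
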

\begin{thm}[See \cite{Na}]
  Let $A$ be a subalgebra of the algebra $C^\infty(X)$ of
  smooth functions on a finite dimensional smooth manifold $X$. Suppose that
  $A$ separates the points of $X$ as well as tangent vectors of
  $M$, then $A$ is dense in $C^\infty(X)$.
\end{thm}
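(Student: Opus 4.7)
The plan is to prove this as a smooth refinement of the Stone--Weierstrass theorem. Recall that the natural topology on $C^\infty(X)$ is the Fréchet topology given by the seminorms $\|f\|_{K,k} = \sup_{x \in K} \sum_{|\alpha| \leq k}|\partial^\alpha f(x)|$, defined via a countable atlas for $X$. The first observation is that the closure $\bar A$ of $A$ in this topology is itself a subalgebra, since addition and multiplication are jointly continuous for the $C^\infty$ topology. Without loss of generality I would adjoin the constants to $A$: the hypotheses are preserved, and this does not enlarge the candidate limit space.

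The heart of the argument is local. Fix a point $p \in X$. Since $A$ separates tangent vectors at $p$, the linear functionals $\{df_p : f \in A\}$ span $T_p^* X$, so one can pick $f_1, \dots, f_n \in A$ (with $n = \dim X$) whose differentials at $p$ form a basis of $T_p^*X$. By the inverse function theorem, $\Phi = (f_1, \dots, f_n)$ is a diffeomorphism from some open neighborhood $U \ni p$ onto an open set $V \subset \mathbb{R}^n$. Hence any $g \in C^\infty(U)$ pulls back from a smooth $h \in C^\infty(V)$ via $g = h \circ \Phi$. On any compact subset of $V$, Weierstrass approximation (applied to $h$ together with finitely many of its derivatives) yields polynomials $P_j$ such that $P_j \to h$ in $C^\infty$ on compacts. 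Pulling back, $P_j(f_1,\dots,f_n) \in A$ approximates $g$ in every seminorm $\|\cdot\|_{K',k}$ with $K' \subset U$ compact. Thus $\bar A$ contains every smooth function \emph{locally} on a neighborhood of each point.

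The globalization is where I would expect the main obstacle. One cannot simply multiply by a smooth partition of unity, since the bump functions must themselves lie in $\bar A$ for the patched sums to stay in the closure. The standard remedy goes via a two-step jet-density lemma: (i) for every $p \in X$ and every $k$, the map $\bar A \to J^k_p(X)$ of $k$-jets at $p$ is surjective, because point separation gives the $0$-jet, tangent separation gives the $1$-jet, and products of elements of $A$ vanishing at $p$ realize all symmetric powers and hence all higher-order jets; (ii) combined with the classical $C^0$ Stone--Weierstrass theorem for point-separating algebras on compacta, one can build elements of $\bar A$ that approximate arbitrarily sharp cutoffs in the $C^\infty$ topology on compact sets. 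Once such ``cutoff-like'' approximants are available in $\bar A$, the local approximations from the previous paragraph glue over a locally finite cover to yield a global $C^\infty$ approximation of any $f \in C^\infty(X)$ by elements of $A$, establishing density.
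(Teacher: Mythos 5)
The paper does not prove this statement at all: it is quoted verbatim as a known theorem of Nachbin (the citation \cite{Na}), so there is no internal proof to compare against. Judged on its own merits, your local argument is fine --- using tangent-vector separation to extract $f_1,\dots,f_n\in A$ forming a chart near $p$, and pulling back polynomial approximations of $h$ (in the $C^k$ Weierstrass sense) through $\Phi=(f_1,\dots,f_n)$ is exactly the right local mechanism. But the globalization, which you yourself flag as the main obstacle, is where the proposal has a genuine gap. Your step (ii) asserts that the classical $C^0$ Stone--Weierstrass theorem together with jet surjectivity at individual points produces elements of $\bar A$ approximating sharp cutoffs \emph{in the $C^\infty$ topology on compact sets}. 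Neither ingredient controls derivatives uniformly on a compact set: $C^0$ Stone--Weierstrass gives no derivative control whatsoever, and jet surjectivity controls derivatives only at the single point where the jet is taken. Producing a $C^\infty$-approximate bump function in $\bar A$ is not easier than the theorem itself --- a bump function is just one more global smooth function to approximate --- so as written the argument is circular at its crucial point. A standard non-circular route that avoids gluing entirely: to approximate on a compact $K$, choose finitely many $f_1,\dots,f_N\in A$ whose joint map $\Phi$ is an embedding of a compact neighborhood $L$ of $K$ into $\BR^N$ (immersivity from tangent separation plus compactness, injectivity from point separation plus a compactness argument off the diagonal); extend $g\circ\Phi^{-1}$ from $\Phi(L)$ to a smooth function on $\BR^N$ and approximate that extension by polynomials in the $C^\infty$ topology; composing with $\Phi$ lands back in $A$ and converges to $g$ in all derivatives on $K$.

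A secondary issue: your ``without loss of generality adjoin the constants'' is not harmless. Adjoining constants can strictly enlarge the closure (take $A$ to be the ideal of functions vanishing at a fixed point $p$; it separates points and tangent vectors on $\BR$, yet its closure is contained in $\{f: f(p)=0\}$). This example also shows that the theorem as stated in the paper is missing Nachbin's third hypothesis, that $A$ vanishes at no point of $X$; your proof silently supplies that hypothesis by adjoining constants, so you should either state it or justify why it may be assumed.
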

Utilizing the theorem and proposition below, we conclude the first objective
of this section:
\begin{cor}\label{c:co_meagre}
  Properties $\star$
  and $\star\star$ hold for a co-meagre subset of compact
  $C^3$-orientable surfaces, in the subspace topology induced from $\mathcal{D}$.
\end{cor}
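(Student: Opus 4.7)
The plan is to combine three ingredients: density of polynomial null sets among compact $C^3$ surfaces via Nachbin's theorem, the genericity of $(\star)$ and $(\star\star)$ within each $\Pol_3(d)$ from Lemmas \ref{l:pol_approx} and \ref{l:pol_approx2}, and stability of these two properties under sufficiently small $C^3$-perturbations. Write $\mathcal{S}$ for the space of compact $C^3$-orientable surfaces in $\BR^3$ with the subspace topology inherited from $\caD_{cs}(\BR^3\times\BS^2)$, and fix a radius $R>0$, restricting attention to surfaces in $B(R)$.

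For density, given $M\in\mathcal{S}$ with $M\subset B(R)$, take a local $C^3$-defining function $\phi$ for $M$ with $\nabla\phi\neq 0$ on $\phi^{-1}(0)$. By Nachbin's theorem I approximate $\phi$ in $C^3$-norm by polynomials $f_k\in\Pol_3(d_k)$ with $d_k\geq 5$; Proposition \ref{p:morse} combined with the implicit function theorem then forces $Z_{f_k}\to M$ in $C^3$, hence in the weak-$*$ topology induced from $\caD_{cs}$. Lemmas \ref{l:pol_approx} and \ref{l:pol_approx2} provide inside each $\Pol_3(d)$ with $d\geq 5$ an open dense subset $G_d$ of polynomials whose null sets satisfy both $(\star)$ and $(\star\star)$. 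Perturbing each $f_k$ by an arbitrarily small element of $\Pol_3(d_k)$ keeps us $C^3$-close to $M$ and places the polynomial in $G_{d_k}$, so the family $\{Z_f : f\in \bigcup_d G_d\}$ is dense in $\mathcal{S}$.

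The central step is to show that $(\star)$ and $(\star\star)$ are preserved under sufficiently small $C^3$-perturbations. The conditions entering these definitions --- $\rank S_m=2$, full rank of the composition in requirement \ref{i:shape_star} of $(\star)$, and full rank of $\iota_4\restriction M_{2\times 2}$ --- are open conditions on the first and second derivatives of a local defining function, so the complementary bad locus is cut out by closed rank-drop conditions. If $M$ is $C^3$-close to $Z_f$ with $f\in G_d$, the shape operators of $M$ (and their images under the relevant rotations) at the relevant points differ by a small amount from those of $Z_f$, so the bad locus of $M$ lies in a small $C^3$-neighborhood of the bad locus of $Z_f$, which is an $\mathcal{I}_1$ (resp.\ $\mathcal{I}_3$) set. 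The smooth covering maps witnessing that bound for $Z_f$ perturb, via the implicit function theorem applied to the rank-drop equations, to smooth covering maps for the bad locus of $M$, so the $\mathcal{I}$-dimensional bound transfers. It is precisely here that continuity of second derivatives --- i.e., the $C^3$ hypothesis on $M$ rather than merely $C^2$ --- is essential, matching the remark following Proposition \ref{p:m2}.

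To conclude, for each $n, d\in\BN$ with $d\geq 5$ let $U_{n,d}\subset\mathcal{S}$ be the set of surfaces lying within $C^3$-distance $1/n$ of some $Z_f$ with $f\in G_d$; each $U_{n,d}$ contains a weak-$*$ open neighborhood in $\mathcal{S}$, and $U_n:=\bigcup_d U_{n,d}$ is weak-$*$ dense by the density step. For $1/n$ small enough, every element of $U_{n,d}$ satisfies $(\star)$ and $(\star\star)$ by the stability step, so $\bigcap_n U_n$ is a comeagre subset of $\mathcal{S}$ on which both properties hold. The main obstacle is the stability step: verifying that the $\mathcal{I}$-dimensional bound on the bad locus --- not merely its measure-zero status --- transfers to nearby $C^3$ surfaces, which requires a parametrized implicit-function-theorem argument controlling the smooth coverings of the bad locus, and this is what forces the $C^3$ hypothesis.
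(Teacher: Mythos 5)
Your overall architecture (density via Nachbin plus Lemmas \ref{l:pol_approx} and \ref{l:pol_approx2}, then a Baire-category argument) is in the spirit of the paper, but your central ``stability'' step is not the paper's argument and it contains a genuine gap. You claim that if $M$ is $C^3$-close to $Z_f$ with $f\in G_d$, then the $\mathcal{I}$-dimension bound on the bad locus transfers to $M$ via the implicit function theorem applied to the rank-drop equations. Two problems. First, proximity of the bad loci is not enough: the bad locus of $M$ could a priori be a thin two-dimensional ribbon contained in a small neighborhood of the bad curve of $Z_f$. To exclude this by the implicit function theorem you need the rank-drop functions of $Z_f$ (e.g.\ $\det S$, built from second derivatives) to vanish \emph{transversally}, and you need the corresponding functions of $M$ to be $C^1$-close to them, i.e.\ you need control of \emph{third} derivatives of $M$ near those of $Z_f$. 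Lemmas \ref{l:pol_approx} and \ref{l:pol_approx2} establish codimension counts but not transversality, and the topology on $\mathcal{D}$ (test functions in $W^{2,2}$) only constrains the surface up to second derivatives, so neither ingredient is available. Second, your sets $U_{n,d}$ are unions of $C^3$-balls, which are not open in the weak-* subspace topology in which the corollary is stated, and the threshold ``$1/n$ small enough'' depends on the particular $f$, so no single $n$ works uniformly over $G_d$. Note also that if your stability claim held, you would conclude that the good set is open and dense, strictly stronger than comeagre --- which should itself be a warning sign.

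The paper sidesteps stability of the $\mathcal{I}$-dimension bound entirely. It observes that for a $C^3$ surface the bad loci $X_1'$ and $X_3'$ are unions of submanifolds cut out by closed conditions on the second derivatives, so having the asserted $\mathcal{I}$-dimension is \emph{equivalent} to having area zero. The condition ``area of $X_1'$ (resp.\ $X_3'$) is less than $1/n$'' \emph{is} open in the chosen topology, by upper semicontinuity of the area of a closed rank-drop locus under convergence of second derivatives, and these open sets are dense by the polynomial approximation lemmas. Intersecting over $n$ yields a comeagre set on which the bad loci have measure zero, hence the correct $\mathcal{I}$-dimension, hence $\star$ and $\star\star$. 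Replacing your stability step by this measure-theoretic reformulation is exactly what makes the argument close, and explains why the conclusion is comeagreness rather than open-density.
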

\begin{proof}
  First, note that both $\star$ and $\star\star$ may be phrased in terms
  of the null sets of some polynomials in the first and second derivatives
  of the surface.

  In particular, for a $C^3$-orientable surface, the sets $X_1'$ and $X'_3$ (which, in
  general, might not be of dimension 1 and 3 respectively), are composed of
  sub-manifolds of $X_2$ and $X_4$. Therefore, saying that they have the
  correct dimension is equivalent to showing that they are of measure zero.

  We claim that for each $\epsilon > 0$, the set of all $C^3$-manifolds in
  which the area of $X_1'$ is $<\epsilon$ is open (and similarly for $X_3'$).
  This is obvious by the continuity of the second derivatives with respect to
  the chosen topology and the closure of the condition in the definitions of
  $X_1'$ and $X_3'$.

  By lemma \ref{l:pol_approx} and lemma \ref{l:pol_approx2}, those sets are
  dense. By taking $\epsilon = \frac{1}{n}$, we conclude that the set of all
  manifolds that satisfy $\star$ and $\star\star$ includes a countable
  intersection of dense open sets, and therefore it is comeagre.
\end{proof}
The next sequence of claims establish the fact that one
can recover an approximation of a fiber of a manifold using
the moments of the manifold. This is the reconstruction step
in the proof of propositions \ref{p:m2} and \ref{p:m4}. Using
this, we can finally connect all the dots and finish the proof
of the main theorem.
\begin{prp}\label{P:pol_density}
  As monomials span a dense subspace of $L^2_{cs}(\BR^3)$, and
  spherical harmonics functions span a dense subspace of $L^2(\BS^2)$,
  products of monomials and spherical harmonic functions span a dense
  subspace of $L^2_{cs}(\BR^3\times \BS^2)$.
\end{prp}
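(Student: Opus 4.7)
The plan is to exploit the Hilbert tensor product decomposition $L^2(K \times \BS^2) \cong L^2(K) \hat\otimes L^2(\BS^2)$ for each compact $K \subset \BR^3$, and to reduce the problem to approximation in each factor separately. First I would reduce to the case where $f \in L^2_{cs}(\BR^3 \times \BS^2)$ has support contained in a fixed compact box $K \times \BS^2$; since every element of $L^2_{cs}$ has such a support by definition, this reduction is free. Then I would invoke the standard Hilbert space fact that if $D_1 \subset L^2(K)$ and $D_2 \subset L^2(\BS^2)$ are dense, then the algebraic tensor product $D_1 \otimes D_2$ is dense in $L^2(K) \hat\otimes L^2(\BS^2) = L^2(K \times \BS^2)$. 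This reduces the problem to approximating the two factors separately.

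For the $\BS^2$ factor the density of finite sums of spherical harmonics is part of the hypothesis (and is classical via Peter–Weyl, as already invoked in the paper). For the $\BR^3$ factor, the statement must be interpreted carefully: monomials themselves are not in $L^2(\BR^3)$, so the claim is that for any $g \in L^2_{cs}(\BR^3)$ with support in $K$, polynomials $L^2$-approximate $g$ on $K$. This follows from the Weierstrass approximation theorem: polynomials are uniformly dense in $C(K)$, and $C(K)$ is in turn $L^2$-dense in $L^2(K)$ by the standard density of continuous functions in $L^2$ of a compact set with finite Lebesgue measure.

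Assembling the pieces, given $\epsilon > 0$ and $f$ supported in $K \times \BS^2$, I would first pick a finite simple-tensor approximation $\sum_{i=1}^N g_i \otimes h_i$ with $\|f - \sum_i g_i \otimes h_i\|_2 < \epsilon/2$, then approximate each $g_i$ by a polynomial $p_i$ and each $h_i$ by a finite sum of spherical harmonics $Y_i$, with the errors $\|g_i - p_i\|_{L^2(K)}$ and $\|h_i - Y_i\|_{L^2(\BS^2)}$ chosen small enough (depending on the norms $\|g_i\|_2, \|h_i\|_2$ and $N$) so that $\|\sum_i g_i \otimes h_i - \sum_i p_i \otimes Y_i\|_2 < \epsilon/2$, using $\|g \otimes h - p \otimes Y\|_2 \leq \|g - p\|_2 \|h\|_2 + \|p\|_2 \|h - Y\|_2$. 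The resulting $\sum_i p_i \otimes Y_i$ is a finite linear combination of products of monomials and spherical harmonics within $\epsilon$ of $f$.

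There is no substantive obstacle; the only mild subtlety is the interpretation of the monomial density claim, which needs the Weierstrass theorem plus a cutoff (handled automatically by the compactness of the support).
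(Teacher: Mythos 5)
Your proof is correct and follows exactly the reasoning the paper leaves implicit: the proposition is stated without proof, its phrasing (``As monomials span \ldots and spherical harmonics span \ldots, products span \ldots'') being precisely the standard tensor-product density fact $L^2_{cs}(\BR^3\times\BS^2)\cong L^2(K)\hat\otimes L^2(\BS^2)$ that you spell out. Your added care about interpreting the monomial claim via Weierstrass on a compact support is a worthwhile clarification but does not change the approach.
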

\begin{cor}\label{c:integral_approx}
  by proposition \ref{P:pol_density} we may approximate the integrals of the
  function against $\gd_{\ti{M}_{2\times4}}$ on any ball
  using the values of the coefficients
  $F_{\gd_M,d,d'}$ for sufficiently large $d,d'$.
\end{cor}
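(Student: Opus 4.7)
The plan is to chain three facts already assembled in the paper: the distribution $\gd_{\ti{M}_{2\times 4}}$ is compactly supported and of finite total mass (inherited from $M$ being compact), each coefficient $F_{\gd_M,d,d'}$ equals (up to a fixed linear rearrangement) the pairing of $\gd_{\ti{M}_{2\times 4}}$ with a specific monomial-times-spherical-harmonic basis element by the corollary at the end of Section \ref{S:rep}, and products of monomials with spherical harmonics span a dense subset of $L^2_{cs}(\BR^3\times \BS^2)$ by Proposition \ref{P:pol_density}. Concretely, I would first fix a ball $B$ and multiply the test function by a smooth bump supported in a slightly larger ball $K_0$ that still contains $B \cap \ti{M}_{2\times 4}$, reducing the claim to approximating $\int \phi\, d\gd_{\ti{M}_{2\times 4}}$ for test functions $\phi$ supported in the fixed compact set $K_0$.

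The second step is to approximate such a $\phi$ by a finite linear combination
\[
P = \sum_{d,d' \leq K} c_{d,d'}\, p_{d,d'},
\]
where each $p_{d,d'}$ is a product of a monomial on $\BR^3$ and two real spherical harmonics on the two $\BS^2$ factors. Since $K_0$ is compact in a Euclidean space (after the standard embedding $\BS^2 \hookrightarrow \BR^3$), Stone--Weierstrass applied to the algebra generated by the coordinate functions on $\BR^3$ together with the harmonics on each $\BS^2$ factor yields such a $P$ with $\|\phi - P\|_{L^\infty(K_0)} < \epsilon$. The corollary at the end of Section \ref{S:rep} then identifies each $\int p_{d,d'}\, d\gd_{\ti{M}_{2\times 4}}$ as a component of $F_{\gd_M,d,d'}$, so $\int P\, d\gd_{\ti{M}_{2\times 4}}$ is an explicit finite linear combination of the invariants with $d,d' \leq K$, depending linearly (and hence continuously) on them.

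Finally, finiteness of $\|\gd_{\ti{M}_{2\times 4}}\|(K_0)$ upgrades the uniform approximation to an approximation of integrals against the measure, via
\[
\Big|\int (\phi - P)\, d\gd_{\ti{M}_{2\times 4}}\Big| \leq \|\phi - P\|_{L^\infty(K_0)} \cdot \|\gd_{\ti{M}_{2\times 4}}\|(K_0) < \epsilon \cdot \|\gd_{\ti{M}_{2\times 4}}\|(K_0).
\]
Applying this to a smooth approximation of $\mathbf{1}_B$ from above and below, and using again the finiteness of the measure to control the difference between the bracketing smooth cut-offs and $\mathbf{1}_B$ itself, completes the argument. The main subtlety worth flagging is the passage from the $L^2$-density statement of Proposition \ref{P:pol_density} to the uniform (equivalently, $L^1(d\gd_{\ti{M}_{2\times 4}})$) approximation actually required, since $\gd_{\ti{M}_{2\times 4}}$ is singular with respect to Lebesgue measure; routing through Stone--Weierstrass on the compact set $K_0$ rather than trying to upgrade an $L^2$ estimate directly is the cleanest path, and it has the additional virtue of making the dependence of $K$ on the target accuracy explicit.
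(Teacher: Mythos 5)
Your proposal is correct and follows the same route the paper intends: the paper offers no written proof for this corollary beyond invoking Proposition \ref{P:pol_density}, and your argument is the natural fleshing-out of that density claim. Your substitution of Stone--Weierstrass on a compact neighbourhood for the $L^2$(Lebesgue)-density statement is in fact a necessary repair, since $\gd_{\ti{M}_{2\times4}}$ is singular; the only remaining loose end is that bracketing $\mathbf{1}_B$ by smooth cut-offs controls the error only up to the mass of a thin annulus around $\partial B$, which is harmless for the generic balls used in the reconstruction but is not controlled by finiteness of the measure alone.
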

\begin{prp}\label{p:ball_integral}
  Given a small enough ball -- in a $d$ dimensional ambient space --
  about a point $p$ on a smooth $d'$ dimensional manifold, the integrals
  of the delta function of the manifold on the ball is the volume of a $d'$
  dimensional ball in $\BR^n$. Hence, if two such manifolds pass through $p$,
  the integral on the ball of the sum of the delta functions is twice
  this volume.
\end{prp}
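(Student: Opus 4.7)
The plan is to reduce to a local calculation using a Monge chart centered at $p$. By the definition of $\gd_M$ (the natural extension of Definition~\ref{D:delta_surface} from surfaces to $d'$-dimensional manifolds), the integral $\int_{B_r(p)} \gd_M$ equals the $d'$-dimensional Hausdorff measure of $M \cap B_r(p)$, so the claim is really a statement about volumes of small neighborhoods of a point on a smooth submanifold.

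First I would choose Euclidean coordinates on the ambient space centered at $p$ such that the first $d'$ coordinate vectors span $T_pM$ and the remaining $d - d'$ span its orthogonal complement. Smoothness of $M$ implies that in a neighborhood of $p$ the manifold is the graph of a smooth function $\phi \colon U \subseteq T_pM \to T_pM^\perp$ with $\phi(0) = 0$ and $d\phi(0) = 0$. The surface area element on $M$ pulled back to $U$ is $\sqrt{\det(I + d\phi^T d\phi)}\, dx$, and the vanishing of $d\phi$ at the origin together with Taylor's theorem gives an expansion of the form $1 + O(|x|^2)$.

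Next I would compare $M \cap B_r(p)$ with its tangent-space approximation $T_pM \cap B_r(p)$. By choosing $r$ small enough that $\phi$ is defined and $|\phi(x)| \leq C|x|^2$ on the relevant region, the graph of $\phi$ over $B_r^{d'}(0) \cap T_pM$ is contained in $B_{r(1+Cr)}(p)$, and conversely $M \cap B_r(p)$ is the graph over a region that differs from $B_r^{d'}(0) \cap T_pM$ by a set of diameter $O(r^3)$. Combining these bounds with the area-element expansion yields
\[
\int_{B_r(p)} \gd_M \;=\; \omega_{d'} r^{d'}\bigl(1 + O(r^2)\bigr),
\]
where $\omega_{d'}$ is the volume of the unit $d'$-ball, which is the required conclusion. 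The second statement then follows from linearity of the integral: if two smooth $d'$-manifolds $M_1, M_2$ both pass through $p$, choose $r$ small enough for the estimate above to hold for each, and then $\int_{B_r(p)}(\gd_{M_1} + \gd_{M_2}) = 2\omega_{d'} r^{d'} + O(r^{d'+2})$.

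The main obstacle, modest as it is, lies in making the constants in the $O(\cdot)$ terms uniform enough to be usable for the algorithmic application: the implicit constant depends on bounds on $d\phi$ and $d^2\phi$ near the origin, which in turn depend on a curvature bound on $M$ near $p$. This is precisely the role of the a priori curvature bound that is hypothesized in the Main Theorem, and once that bound is in place the constants become explicit.
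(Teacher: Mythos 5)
Your proof is correct, and in fact the paper states Proposition \ref{p:ball_integral} with no proof at all, so your Monge-chart computation (graph over $T_pM$ with $d\phi(0)=0$, area element $\sqrt{\det(I+d\phi^{T}d\phi)}=1+O(|x|^2)$, hence $\int_{B_r(p)}\gd_M=\omega_{d'}r^{d'}(1+O(r^2))$ and additivity for the double-point case) supplies exactly the standard argument the authors evidently had in mind. Your closing remark about making the $O(\cdot)$ constants uniform via the a priori curvature bound is also the right observation for how this proposition is actually used in the reconstruction algorithm.
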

\begin{proof}[Proof of the main Theoem]
  By theorem \ref{t:con_moments} and
  proposition \ref{P:l2fiber}, the coefficients $F_{\gd_M,d,d'}$ are the Fourier
  Coefficients of $\gd_{\ti{M}_{2\times m}}$, and by corollary \ref{c:co_meagre},
  we may assume $M$ satisfies $\star$ and $\star\star$.
  Following proposition \ref{p:m4}, pick some
  $f\in\BS^2\times\BR^+\times(I_3\sm\ D_1)$ and cover
  the fiber $\BR^3\times \BS^2\times\{f\}$ with
  balls of radius $\gep$ which depends on the bound of the sectional
  curvature (the balls are not in the fiber - they are in the
  ambient space).
  Given $F_{\gd_M,d,d'}$, possibly known only up to some error $\gep'$, for sufficiently
  large $d,d'$, we use corollary \ref{c:integral_approx} to compute the
  intersection of $\ti{M}_{2\times 4}$ with balls which are with
  distance from the origin up to four times the bounding ball of $M$,
  with accuracy which depends on $d,d',\gep'$.

  By proposition \ref{p:ball_integral} we know through which of these balls
  the restriction of $\ti{M}_{2\times4}$ passes, and through which ones it has
  a double point. By proposition \ref{p:m4} double points are the worst form of
  singularity which we will encounter (for generic $f$), and removing them,
  we remain -- again by proposition \ref{p:m4} with a set of copies of $M$
  minus the isolated removed points, up to rotations and translations,
  and up to an error which depends on $\gep$. Since $M$ is connected and
  compact, for sufficiently small $\gep$, we can separated the different copies
  of $M$ from each other.
\end{proof}
\section{Open Problems}
We conclude this paper with a list of open problems.

\begin{question}
Are there two smooth manifolds with the same 4-degree invariants which are not in the same $SE(3)$-orbit?
Similarly, are there two separated sequences of smooth manifolds such that their invariants converge pointwise to the same value?
\end{question}
By our main result, this situation cannot occur in manifolds that satisfy $(\star\star)$.

In the definition of properties $(\star)$ and $(\star\star)$, the first requirement
is designed to make the reconstruction using fiber selection work, in the sense that most
fibers correspond to finitely many copies of the manifold.
\begin{question}
Can we weaken the requirements of $(\star), (\star\star)$, potentially making the corresponding collection of bad points $X_1', X_3'$ smaller?
In particular, can we replace the first requirements with the conclusions of lemmas \ref{L:prop_star1_is_regualrity} and \ref{lemma:starstar1_equiv_iota_rank} respectively (i.e.\ the local injectivity of the projection from $X_2, X_4$ to the manifold)?
\end{question}
\begin{question}\label{q:better packing of the data}
Is there a better way to pick the fiber in the reconstruction algorithm (instead of
picking it randomly)?
In particular, is there an effective way to select a fiber for which the copies of $M$ are
most separated? Can one achieve a better algorithm by considering several fibers together ?
\end{question}

In section \ref{S:rep}, we translated our geometrical algorithm into a
computation based on the coefficients of the delta function of the manifold
(in the generation of the invariants and in the reconstruction).
Our choice of basis makes both computations heavy.
\begin{question}
Is there a better choice of
representation, in the functional analytic side, that can reduce the
computational cost of the algorithm, and produce a denser representation of
the invariants?
\end{question}
We are using the functional analytic tools in a very weak sense.
In particular, while concentrating on a single fiber we effectively loss almost all
information from the other parts of the distribution.
\begin{question}
Is there a more efficient way to use the information from the invariants
in order to reconstruct the manifold?
\end{question}

A positive answer to the following question will likely require a different, less geometrical, approach.
\begin{question}
Is there a way to effectively reconstruct the surface using a collection of 3-degree invariants, or even 2-degree invariants?
\end{question}
\begin{question}
In the proof of the main theorem, we stated that for every error $\epsilon$ in the coefficients of the reconstructed manifold, there is a sufficiently small $\epsilon'$ and sufficiently large $d,d'$ that guarantee this level of accuracy. What is the connection between $\epsilon$ and $\epsilon', d, d'$?
\end{question}

\end{document}